\numberwithin{equation}{section}
\theoremstyle{plain}
\newtheorem{corollary}{Corollary}[section]
\newtheorem{definition}{Definition}[section]
\newtheorem{lemma}{Lemma}[section]
\newtheorem{theorem}{Theorem}[section]
\newcommand{\beq}{\begin{equation}}
\newcommand{\eeq}{\end{equation}}
\newcommand{\beqs}{\begin{eqnarray*}}
\newcommand{\eeqs}{\end{eqnarray*}}
\newcommand{\beqn}{\begin{eqnarray}}
\newcommand{\eeqn}{\end{eqnarray}}
\newcommand{\beqa}{\begin{array}}
\newcommand{\eeqa}{\end{array}}
\def\Om{\Omega}
\def\pom{\partial\Omega}
\def\bom{\overline\Omega}
\def\p{\partial}
\def\eps{\varepsilon}
\def\J{\mathcal J}
\def\I{\mathcal I}
\newcommand{\R}{\mathbb R}
\begin{document}

\title  [A new minimax principle]
{ {A new minimax principle and \\ 
application to the $p$-Laplace equation}}

 \author{Xu-Jia Wang \&  Xinyue Zhang}
\address{ Xu-Jia Wang, 
Institute for Theoretical Sciences, Westlake University, Hangzhou, 310030, China.}
\email{wangxujia@westlake.edu.cn}

\address{Xinyue Zhang, 
Institute for Theoretical Sciences, Westlake University, Hangzhou, 310030, China.}
\email{zhangxinyue@westlake.edu.cn}


\thanks{2010 Mathematics Subject Classification: 35J20, 35B38, 47J30,  35J92}

\keywords{Neumann problem, multi-peak solution, $p$-Laplace equation}


\maketitle

\begin{abstract} 

We introduce a new minimax principle to prove the existence of multi-peak solutions 
to the Neumann problem of the $p$-Laplace equation
$$
-\eps^p \Delta_p u =  u^{q-1}  -  u^{p-1} \ \ \text{in}\ \Om,
$$
where $\Om$ is a bounded domain in $\R^n$ with smooth boundary,
$1<p<n$ and $p<q< \frac{np}{n-p}$.
The minimax principle will be applied to the set of peak functions,
which is a subset of the Sobolev space $W^{1,p} (\Om)$. 
The argument is based on a combination of variational method, topological degree theory, and gradient flow.

\end{abstract}

\baselineskip16.2pt
\parskip5pt

\section{Introduction}

The minimax principle, such as the Mountain Pass Lemma, 
is one of the most useful tool in the critical point theory,
and has been extensively applied in the study of nonlinear elliptic equations \cite{R84, Will}.

In this paper, we introduce a new minimax principle, 
by combining the gradient flow and the topological degree theory.
We use the new minimax principle to prove the existence of single or multiple peak solutions
to the Neumann problems of semilinear and quasilinear elliptic equations.
Our proof avoids the well-known Liapunov-Schmidt reduction, 
and can be applied to more general variational problems.

We will study the Neumann problem  of the $p$-Laplace equation
\beq\label{NP}
{\begin{split}
-\eps^p \Delta_p u & =  u^{q-1} -  u^{p-1} \  \ \text{in}\ \Om, \\
u_\nu  & =0 \ \ \ \text{on}\ \pom , \\
u & >0\ \ \ \text{in}\ \ \Om,
\end{split}}
\eeq
where $\Om$ is a smooth, bounded domain in the Euclidean space $\R^n$,
$\nu$ is the unit outer normal, and $\eps>0$ is a small  constant.
We assume that $1<p<n$ and $p<q< \frac{np}{n-p}$.
As usual, we denote the $p$-Laplace operator by $\Delta_p$, which is given by
$$\Delta_p u = \text{div}(|Du|^{p-2} Du).$$

Single or multiple peak solutions of elliptic equations with Neumann boundary condition
have attracted significant interest 
due to the special geometric feature of the solutions, 
and  have been extensively studied since the 1980s.
This topic was first investigated by Lin, Ni and Tagaki \cite {LNT88, NT86, NT91, NT93}, 
who obtained the least energy solutions and proved that, for sufficiently small $\eps$, 
the least energy solution is a single peaked function, and the peak is located on the boundary near point where the mean curvature attains its maximum.  
After that this topic has received considerable attention and has led to numerous significant contributions. 
Let us recall several results concerning multi-peak solutions.
In the papers \cite{CK99, Gui96, Wa95}, 
the authors established the existence of solutions with multiple boundary peaks under various conditions
on the boundary of the domain.
In the papers \cite{CW98, DY99, GW99}, interior multi-peak solutions were constructed,
while the authors in  \cite{DY99b, GW00, GWW} proved the existence of both interior and boundary 
multi-peaked solutions under certain conditions on the boundary. 
In general, boundary peaks are associated with the mean curvature of the boundary, 
whereas interior peaks are related to the distance function. 
An important tool in these studies is the Liapunov-Schmidt reduction method. 
This technique requires an estimate of the third eigenvalue of the linearized equation,
such that the linearized equation is locally invertible. 
A different approach, called the manifold approach, was utilized in \cite{Ko99}. 
The main idea of this method is to construct a family of approximate solutions and 
then show the existence of a true solution near this family.
This method again needs the estimate for the eigenvalue of the linearized equation.
As a result, all the above mentioned papers deal with the Laplace operator, namely the case when $p=2$.

There are many more works on the existence of nontrivial solutions to the Neumann problem in the case $p=2$,
such as solutions concentrating on a component of the boundary \cite{MM02, MM04},
or non-constant solutions which disprove the Lin-Ni's conjecture \cite{DMM14, DMRW, WWY1, WWY2}.
We also refer the reader to \cite{CPY, DY06, GL02,  GWY, LNW, Rey, RW05, Wa92} 
for different developments on this topic.

For the $p$-Laplace equation, estimate for the eigenvalue of the linearized $p$-Laplace operator is lacking, 
which remains unclear whether one can apply the Liapunov–Schmidt reduction method or the manifold approach, to the $p$-Laplace equation \eqref{NP}.
Therefore, no results on the existence of multi-peak solutions for the $p$-Laplace equation are currently available, as such analysis requires a completely different approach.



In this paper, we introduce a new method 
that utilizes the idea of the minimax principle, 
in combination with topological degree theory and gradient flow, 
to establish the existence of multi-peak solutions to 
the $p$-Laplace equation \eqref{NP}.

\begin{theorem}\label{T1.1}
Let $\Om$ be a smooth, bounded domain in $\R^n$.
Assume that $1<p<n$ and $p<q< \frac{np}{n-p}$.
Then for any  integers $k, l\ge 0$, $k+l\ge 1$, 
the Neumann problem \eqref{NP} admits a solution
with interior $k$ peaks and boundary $l$ peaks, as long as $\eps$ is sufficiently small.
\end{theorem}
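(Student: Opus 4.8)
The plan is to construct the $k+l$ peaks as near-superpositions of rescaled ground states and to detect an honest critical point of the energy by coupling a finite-dimensional topological degree, computed on the space of peak positions, with the negative gradient flow of the energy; this is precisely what replaces the Lyapunov--Schmidt reduction, whose use is obstructed here by the lack of $C^2$ smoothness and of spectral estimates for the linearised $p$-Laplacian. First I would fix the variational setting. Positive solutions of \eqref{NP} are the positive critical points of
\[
J_\eps(u)=\frac{\eps^p}{p}\int_\Om|Du|^p\,dx+\frac1p\int_\Om u_+^p\,dx-\frac1q\int_\Om u_+^q\,dx,\qquad u\in W^{1,p}(\Om),
\]
which is of class $C^1$. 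Rescaling $x=\eps y$ turns the equation into the limiting problem $-\Delta_p U=U^{q-1}-U^{p-1}$, with a radial, positive, exponentially decaying ground state $U_0$ on $\R^n$ and, under the Neumann condition, the half-space ground state $U_0^+$, which is the restriction of $U_0$ to a half-space through its centre and has half the energy of $U_0$. For a configuration $\xi=(\xi_1,\dots,\xi_k;\zeta_1,\dots,\zeta_l)$ with the $\xi_i\in\Om$ pairwise separated and bounded away from $\pom$ and the $\zeta_j\in\pom$ pairwise separated, let $W_{\eps,\xi}\in W^{1,p}(\Om)$ be the usual sum of cut-off rescaled bubbles centred at the $\xi_i$ and of reflected bubbles centred at the $\zeta_j$. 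Writing $\Lambda$ for the open, bounded space of admissible configurations, the minimax principle will be applied to the peak set $\mathcal P_\eps=\{W_{\eps,\xi}:\xi\in\Lambda\}\subset W^{1,p}(\Om)$.

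Next I would record the reduced energy $\Phi_\eps(\xi):=J_\eps(W_{\eps,\xi})$. To leading order it is the sum of the bubble energies, $\big(k+\tfrac{l}{2}\big)c_\infty$; the next terms record the mean curvature $H$ of $\pom$ at each boundary peak (an order-$\eps$ contribution) together with the exponentially small, mutually repulsive interactions among the peaks and between interior peaks and $\pom$. After subtracting the constant and normalising, $\Phi_\eps$ is a small perturbation of a fixed function on $\Lambda$ that degenerates monotonically as $\xi\to\partial\Lambda$, i.e.\ as two peaks collide or an interior peak reaches $\pom$. Consequently a distinguished critical configuration $\xi^*$ of $\Phi_\eps$ --- produced by an elementary max--min over $\Lambda$ built from $\sum_jH(\zeta_j)$ and a weighted separation function, a saddle in some variables and an extremum in others --- lies in a fixed compact part of $\Lambda$ and carries a nonzero local Brouwer degree. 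This finite-dimensional degree is the topological seed of the argument.

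Then I would set up the new minimax principle. Fix a thin $W^{1,p}$-tube $\mathcal N=\mathcal N_\rho(\mathcal P_\eps)$ and a barycentre map $\beta\colon\mathcal N\to\Lambda$, built from weighted centres of mass of the concentration near each peak and well defined on $\mathcal N$ by the profile description of near-peak functions, with $\beta(W_{\eps,\xi})=\xi$. Let $\Gamma$ be the class of continuous maps $\gamma\colon\Lambda\to\mathcal N$ that agree near $\partial\Lambda$ with the inclusion $\xi\mapsto W_{\eps,\xi}$, and set $c_\eps=\inf_{\gamma\in\Gamma}\sup_{\xi\in\Lambda}J_\eps(\gamma(\xi))$. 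Running a pseudo-gradient flow $\eta_t$ of $J_\eps$ on $W^{1,p}(\Om)$ that preserves positivity and the Neumann condition, I would establish: (i) an \emph{energy gap} --- in the shell $\big(\mathcal N\setminus\mathcal N_{\rho/2}(\mathcal P_\eps)\big)\cap\{J_\eps\le c_\eps+\sigma\}$ there is no near-critical point, except possibly a genuine solution of \eqref{NP} with exactly $k$ interior and $l$ boundary peaks; and (ii) \emph{local compactness} --- $J_\eps$ satisfies a Palais--Smale condition inside $\mathcal N\cap\{|J_\eps-c_\eps|\le\sigma\}$, using the subcriticality of $q$ and the strict sub-additivity of bubble energies to exclude bubbling and vanishing. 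Granting (i) and (ii), the flow yields a deformation lemma in the energy band inside $\mathcal N$; coupled with the nonzero degree of the previous step, encoded through $\beta$ --- which turns any $\gamma\in\Gamma$ into a degree-one self-map $\beta\circ\gamma$ of $\Lambda$ relative to $\partial\Lambda$, hence surjective onto a neighbourhood of $\xi^*$ --- this forces $c_\eps$ to be a critical value realised inside $\mathcal N$ by a solution of the required type: otherwise (i)--(ii) would let the flow retract $\mathcal P_\eps$, within $\mathcal N$, to a set on which $J_\eps<c_\eps$, turning $\beta\circ\eta_T|_{\mathcal P_\eps}$ into a degree-one self-map of $\Lambda$ that avoids $\xi^*$, which is impossible. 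Finally, the maximum principle (working with $u_+$) gives positivity, and a concentration analysis of the solution in the band $\{J_\eps\approx(k+\tfrac{l}{2})c_\infty\}$ confirms the peak count, completing the proof of Theorem~\ref{T1.1}.

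The hard part will be the energy-gap statement (i). When $p=2$ it is a free consequence of the coercivity of the linearised operator in the directions transverse to $\mathcal P_\eps$; for the $p$-Laplacian no such spectral control is available, so one must instead prove a \emph{quantitative nonlinear profile decomposition}: any $u\in\mathcal N$ with $J_\eps(u)$ close to $\big(k+\tfrac{l}{2}\big)c_\infty$ and $\|J_\eps'(u)\|$ small is $W^{1,p}$-close either to some $W_{\eps,\xi}$ with $\xi$ in a fixed compact part of $\Lambda$ or to a genuine solution of the prescribed type --- the strict sub-additivity of the bubble energies, the strict gap between the half-space and whole-space ground state energies, and the monotone degeneration of $\Phi_\eps$ near $\partial\Lambda$ being what exclude every other alternative. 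Carrying this out uniformly in $\eps$ for the $p$-Laplace operator, and simultaneously constructing a pseudo-gradient flow compatible with the tube $\mathcal N$, the positivity cone and the barycentre map $\beta$, is the technical core of the argument.
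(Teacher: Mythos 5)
Your scheme is the classical one (approximate-solution manifold, barycentre map, inf--sup linking level, deformation lemma in a tubular shell), and it founders exactly where the paper says all such schemes founder for $p\ne 2$. The load-bearing step is your energy-gap statement (i), i.e.\ the ``quantitative nonlinear profile decomposition'': every $u$ in the tube with near-critical energy and small $\|J_\eps'(u)\|$ is $W^{1,p}$-close to some $W_{\eps,\xi}$ or is itself a solution of the prescribed type. For $p=2$ this follows from coercivity of the linearised operator transverse to the manifold of bubbles; for the $p$-Laplacian no such spectral information is available, and you offer no substitute mechanism --- you correctly label it ``the technical core'' but do not close it. Without (i) the deformation lemma in the shell, and hence the whole degree-versus-retraction contradiction, does not get off the ground. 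A second, independent gap is your production of the critical configuration $\xi^*$: you need the order-$\eps$ curvature expansion of $\Phi_\eps(\xi)=J_\eps(W_{\eps,\xi})$ and a topologically stable max--min of $\sum_j H(\zeta_j)$ coupled with separation terms. That expansion is itself unproved for $p\ne2$, and on a general smooth domain the combined reduced functional need not have a critical point of nonzero degree without extra hypotheses --- whereas Theorem \ref{T1.1} imposes none.

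The paper avoids both problems by inverting the minimax. It works with the Rayleigh quotient $\I$ and sets $S^*=\sup_{(\mathbf{p},\mathbf{q},\mathbf{a},\mathbf{b})}\inf_{u\in\Phi_{k,l}}\I(u)$: a \emph{supremum over configurations} (including amplitude parameters $\mathbf{a},\mathbf{b}$) of the \emph{infimum over the fiber of peak functions} with that configuration. The sup is pinned down at leading order only, $(k+\tfrac l2)^{1-p/q}S_0$, because degenerate configurations lose a fixed amount $c_N$ independent of $\eps$ (Lemmas \ref{L3.4}--\ref{L3.5}); no curvature expansion is needed. Critical points are then detected by running the genuine parabolic $p$-Laplace flow (regularised in $s$, with the nonlinearity $\eta$ enforcing a priori bounds), freezing it when the energy drops below $S^*-\sigma$, and showing by blow-up and uniqueness of the ground state --- not by linearised estimates --- that an unfrozen flow line stays inside the peak-function class (Lemmas \ref{L4.4}--\ref{L4.6}). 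The induced map $T_t$ on configurations is the identity on $\partial G$, so Brouwer degree gives $T_t(G)=G$; combined with the sup--inf structure this produces, for every $t$, a configuration whose flow line still sits above $S^*$, and a limiting flow line that never descends below $S^*$ and converges to a solution. In short: where you must classify all near-critical points in a shell, the paper only needs to follow finitely many explicit flow lines, which is why it never touches the linearised operator. To repair your proposal you would either have to prove the profile decomposition for the $p$-Laplacian (open, and essentially equivalent to the missing spectral theory) or restructure the minimax as a sup--inf in the paper's sense.
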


For semilinear elliptic equations, namely the case when $p=2$,
our argument provides a different, but simpler proof for the existence of solutions with both interior and boundary peaks.
Furthermore, the minimax \eqref{S*} also gives rise to a new observation on the solution, 
namely it reaches the maximum of the functional $\I$  in the parameters {\bf a} and {\bf b}.
Note that we do not impose any extra conditions on the boundary $\pom$ for the existence 
of solutions with multi-peaks on the boundary. 
Our method can also be applied to study other variational problems, 
such as the nonlinear Schr\"odinger equation in $\R^n$, which we plan to treat in subsequent papers.

We point out that the solutions we obtained in Theorem \ref{T1.1}, in the case $k+l=1$,
are not  the least energy solution obtained in \cite{NT91, NT93}. 
The solutions we obtained have higher energy and are unstable solutions.

Our proof needs the uniqueness of the ground state solution to \eqref{ST00}, 
proved for $1<p<n$ in \cite{PS98, ST00}.
That is, there is a unique ground state solution $w$ to 
\beq\label{ST00}
-\Delta_p u =  u^{q-1} -  u^{p-1} \  \ \text{in}\ \R^n,
\eeq
which is rotationally symmetric \cite{SZ99}.
Moreover, $w$ is the minimizer of the functional
\beq\label{Min1}
\J (u)= \frac{\int_{\R^n} (|Du|^p+|u|^p)dx}{\big( \int_{\R^n} |u|^q dx\big)^{p/q}}, \ \ \ u\in W^{1,p}(\R^n).
\eeq
When $p=2$, it was proved that $w$ decays exponentially \cite{GNN}. 
For $p\ne 2$, the exponential decay  has also been obtained in \cite{dMY05, LZ05b}.

This paper is arranged as follows. In Section \ref{S2}, we introduce peak functions.
In Section \ref{S3},  we construct the new minimax principle. 
To complete this construction, a gradient flow is introduced in Section  \ref{S4}. 
In Section \ref{S5}, we apply the gradient flow together with topological degree theory to prove Theorem \ref{T1.1}.

\vskip20pt

\section{Peak functions}\label{S2}

To prove Theorem \ref{T1.1}, 
we search for peak solutions such that, near each peak point $p$,
the solution looks like $w(\frac{x-p}{\eps})$  in the sense of \eqref{peak1},
where $w$ is the unique positive solution to \eqref{ST00} in $W^{1,p}(\R^n)$. 
For this purpose,  we first introduce the notion of peak functions. 
For clarity, this section is divided into several subsections.

\subsection{Peak function}\label{S2.1}

\vskip10pt
\begin{definition}\label{D2.1}
A nonnegative function $u: \Om \rightarrow \mathbb{R}$ is a peak function with interior peaks $p_1, \cdots,  p_k \in \Om $ 
and boundary peaks $q_1, \cdots, q_l\in\pom$  if the following conditions hold:
\begin{itemize}

\item [(i)] 
There exist positive constants $a_1, \cdots, a_k, b_1, \cdots, b_l\in (1-\hat \delta, 1+\hat\delta)$, 
such that 
\beq\label{peak1}
 \big{\|}u(x)- {\Small\text{$\sum_{i=1}^k$}} a_i w\big( {\Small\text{$\frac{x-p_i }{\eps}$}} \big)
            - {\Small\text{$\sum_{j=1}^l$}} b_j w\big(  {\Small\text{$\frac{x-q_j }{\eps}$}} \big)\big{\|}_{L^\infty(\Om)} < {\bar\delta} ,
\eeq
where $w$ is the unique ground state solution to \eqref{ST00}.

\item [(ii)] $p_1, \cdots, p_k $ and $q_1, \cdots, q_l$
are local mass centres of $u$.

\item [(iii)] $p_1, \cdots, p_k$ and $q_1, \cdots, q_l$
satisfy the ${\delta}$-apart condition for a small constant $\delta>0$.

\item [(iv)]   $u\in C^\gamma (\bom)\cap W^{1,p}(\Om)$ for some $\gamma\in (0, 1)$, and 
\beq\label{peak2}
\eps^{-n} \int_\Om (\eps^p\, |Du|^p+|u|^p) dx \le C,
\eeq
\beq\label{peak3}
\|u\|_{C^\gamma(\bom)}\le C\eps^{-\gamma} ,
\eeq
where the constant $C$ depends only on $k, l, n$.

\end{itemize}

\end{definition}

Given a peak function $u$ as in Definition \ref{D2.1}, we will show below that 
the coefficients $a_{i}, b_j$ and the peaks $p_i$, $q_j$ can be uniquely determined. 
This property is essential for the topological degree theory in the following argument.

Throughout this paper, we use $\eps, \delta, \bar\delta, \hat\delta$ to denote small constants satisfying 
$0<\eps, \delta, \bar\delta, \hat\delta<\frac 14$. 
Here, $\eps$ always stands for the constant in equation \eqref{NP}.
$\bar\delta, \hat\delta$ in part (i) of Definition \ref{D2.1} are independent of $\eps$,
while $\delta$ in part (iii) may depend on $\eps$, 
such as $\delta=N\eps$ for a large constant $N>1$ ($N$ independent of $\eps$).
The constants $\delta, \bar\delta$ and $\hat \delta$ will be determined in the following argument.

For brevity, we denote ${\bf p}=(p_1, \cdots, p_k)$, ${\bf q}=(q_1, \cdots, q_l)$,  
${\bf a}= (a_1, \cdots, a_k)$ and ${\bf b}= (b_1, \cdots, b_l)$.
We define $\Phi_{k; l}({\bf p, q},  {\bf a, b})$ as the set of peak functions 
with interior peaks ${\bf p}\in\Om^k$ and boundary peaks ${\bf q}\in (\pom)^l$, 
and coefficients ${\bf a}$ and ${\bf b}$,
where $\Om^k=\Om\times\cdots\times\Om\subset \R^{n\times k}$, and\
$(\pom)^l=\pom\times\cdots\times\pom\subset\R^{n\times l}$.
We also denote
\beq\label{weps}
w_{{\bf p, q}, {\bf a, b}, \eps} (x) ={\Small\text{$ \sum_{i=1}^k$}} a_i w\big( {\Small\text{$\frac{x-p_i }{\eps}$}} \big)
            + {\Small\text{$\sum_{j=1}^l $}} b_j w\big(  {\Small\text{$\frac{x-q_j }{\eps}$}} \big)  .
\eeq

Therefore, we have
\beq\label{Phi}
\begin{aligned}
& \ \ \ \ \Phi_{k; l}({\bf p, q},  {\bf a, b}) \\
&  = \{u\in C^\gamma (\bom)\cap W^{1,p}(\Om)\ |\ u \ \mbox{is a peak function with} \ \|u-w_{{\bf p, q}, {\bf a, b}, \eps}\|_{L^\infty(\Om)}<\bar\delta \},
 \end{aligned}
\eeq
which is an open set in $C^\gamma (\bom)$ and a subset of  $W^{1,p}(\Om)$.

Let $u$ be a peak function with a peak  $p$, and 
let $y=\frac{x-p}{\eps}$. By \eqref{peak1} we have
\beq\label{peak1a}
|u(\eps y+p)- a w(y)| \le {\bar\delta} 
\eeq 
for any $y\in B_{{\delta}/\eps}(0)\cap\Om^\eps$,
where $\Om^\eps=\Om^\eps_p=\{y=\frac{x-p}{\eps}\ |\ x\in\Om\} $ (the subscript $p$ in $\Om^\eps$ is omitted for brevity).
Here, $B_r(p)$ denotes the ball of radius $r$ with centre at $p$ in $\R^n$ .
In the coordinates $y$, \eqref{peak2} becomes
\beq\label{peak2a}
 \int_{\Om^\eps} ( |Du|^p+|u|^p)dy \le C,
\eeq
and \eqref{peak3} implies 
\beq\label{peak3a}
\|u\|_{C^\gamma(\bom^\eps)} \le C.
\eeq
Assumptions \eqref{peak2a} and \eqref{peak3a} ensure that all peak functions are uniformly bounded
in $W^{1,p}(\Om^\eps)$ and $C^\gamma(\bom^\eps)$.

If a peak function is  a solution to \eqref{NP}, one can deduce that $a_i, b_j\to 1$ as $\eps\to 0$, by applying a blow-up argument and the uniqueness of the ground state solution to \eqref{ST00}.  
Therefore, in Definition \ref{D2.1}, we assume that 
$a_i$ and $b_j$ are near 1. 

In Definition \ref{D2.1}, we assume that a peak function is H\"older continuous. 
By the regularity of the $p$-Laplace equation \cite{DB93}, 
we may assume that a peak function is $C^{1,\gamma}$, 
but we cannot assume it is $C^2$.

\subsection{${\delta}$-apart condition}\label{S2.2}

We say that the points $p_1, \cdots, p_k\in\Om$ and $q_1, \cdots, q_l\in\pom$
satisfy the ${\delta}$-apart condition if  the following properties hold:
\begin{itemize}
\item [$\diamond$]  \  $d_{p_i} > {\delta} $ for all $1\le i\le k$,  i.e.,  $p_i\in \Om_{{\delta}}$, 
\item [$\diamond$] \  $|p_i-p_j| > 2{\delta}$ for all $1\le i, j\le k$ and $i\ne j$, 
\item [$\diamond$]\   $|q_i-q_j| > 2{\delta}$ for all $1\le i, j\le l$ and $i\ne j$,
 \end{itemize}
where $p_1, \cdots, p_k$ are interior peaks and $q_1, \cdots, q_l$ are boundary peaks, 
$\Om_\delta=\{x\in\Om |\ d_x>\delta\}$ and $d_x=\text{dist}(x, \pom)$.

\vskip5pt

\subsection{Local mass centres of a peak function}\label{S2.4}

The points  $p_1, \cdots, p_k$ and $q_1, \cdots, q_l$
cannot be uniquely determined from \eqref{peak1} alone.
Indeed, if $p_1, \cdots, p_k$ satisfy  \eqref{peak1},
then by a very small translation of these points, they still satisfy \eqref{peak1}.
To ensure their uniqueness, 
we introduce the notion of local mass centres and
we require that the peaks $p_i$ and $q_j$ are the local mass  centres of the function.

Let $u$ be a nonnegative measurable function in $\Om$, satisfying \eqref{peak1}.
We say that $p\in\Om $ is an {\it interior local mass  centre}  of $u$ if $d_p>{\delta}$, $|p-p_i|<\eps$ for some $1\le i\le k$, 
and  for all unit vector $e$ in $\R^n$, 
\beq\label{cen1}
\int_{\R^n} \chi_e (y)
          \, \eta(|y| \big)\, u(y+p) dy  =0.
\eeq
In \eqref{cen1}, the function $\chi_e$ is defined by
$$  \chi_e(y) =\bigg\{{\begin{split}
 & 1\ \ \ \ \ \ \  \ \text{if $\langle e, y \rangle>0$},\\[-2pt]
 & -1\ \ \  \ \text{if $\langle e, y \rangle<0$},
 \end{split}} $$
 where $\langle \cdot, \cdot \rangle$ denotes the inner product in $\R^n$.
 The function $\eta$ in \eqref{cen1} is a weight function, which we can choose simply as
\beq\label{eta}  
\eta(t)= \bigg\{ {\begin{split}
 & 1\ \ \ \ \text{if}\ |t|<{\eps}, \\ 
 & 0\ \ \ \ \text{if} \ |t|>{\eps} .
 \end{split}} \eeq
Alternatively, a different weight function may also be selected.

Let $q\in\pom$ be a boundary point of $\Om$.
We say that $q$ is a {\it boundary local mass  centre}  of $u$ if $|q-q_j|<\eps$ for some $1\le j\le l$, 
and \eqref{cen1} holds for any unit vector $e$
in the tangential plane of $\pom$ at $q$.

Alteratively,  we can define a {\it boundary local mass centre} $q$ of $u$ on the boundary by first flattening 
the boundary in $B_{2\delta}(q)$, making an even extension of the function such that the boundary point 
becomes an interior point,
and then defining the boundary local mass centre exactly in the same way as the interior ones.

It is possible that there are many points in $\Om$ satisfying \eqref{cen1},
but we are only interested in points near $p_i \ (i=1, \cdots, k)$ and $q_j\  (j=1, \cdots, l)$
in Definition \ref{D2.1}. 
By \eqref{peak1} and the monotonicity of the function $w$,
we can deduce the existence and uniqueness of local mass centres.

\begin{lemma}\label{L2.1}
Let $u$ be a peak function. 
Then for any point $p_i\in{\bf p}$, there is only one interior local mass centre  $\hat p_i$ in $ B_{{\eps}/4}(p_i)$,
provided that $\bar\delta$ is sufficiently small.
Similarly for any point $q_i\in{\bf q}$, there is only one boundary local mass  centre  $\hat q_i$  on $\pom$ in $ B_{{\eps}/4}(q_i)\cap\Om$.
\end{lemma}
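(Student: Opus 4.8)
The assertion is in substance one of \emph{uniqueness}: since $u$ is a peak function with interior peak $p_i$, part~(ii) of Definition~\ref{D2.1} already makes $p_i$ an interior local mass centre lying in $B_{\eps/4}(p_i)$, so what must be proved is that no other point of that ball satisfies \eqref{cen1}. The plan is to blow up at $p_i$: with $\zeta=(x-p_i)/\eps$ and $U(\zeta)=u(\eps\zeta+p_i)$, a point $p=\eps v+p_i$, $v\in B_{1/4}(0)$, satisfies \eqref{cen1} exactly when
\[
 g_v(e):=\int_{B_1(0)}\chi_e(\xi)\,U(\xi+v)\,d\xi=0\qquad\text{for every unit vector }e ,
\]
which is \eqref{cen1} rewritten by the change of variables with the weight \eqref{eta}. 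On $B_2(0)$ I would write $U=a\,w+\psi$, $a=a_i\in(1-\hat\delta,1+\hat\delta)$, where $\psi$ collects the error in \eqref{peak1} together with the tails $a_jw(\cdot+(p_i-p_j)/\eps)$ and $b_jw(\cdot+(p_i-q_j)/\eps)$ of the remaining bumps; these tails are evaluated at points of modulus $\gtrsim N$, so by the decay of $w$ they are negligible once $N$ ($\delta=N\eps$) is large, whence $\|\psi\|_{L^\infty(B_2)}$ is as small as we please for $\bar\delta$ small, while \eqref{peak3a} keeps $\|\psi\|_{C^\gamma(B_2)}\le C$. Since $p_i$ is already a centre, $g_0(e)=0$; combined with $\int_{B_1(0)}\chi_e(\xi)w(\xi)\,d\xi=0$ (reflection symmetry of the radial $w$) this yields the normalization $\int_{B_1(0)}\chi_e(\xi)\psi(\xi)\,d\xi=0$ for every unit $e$.

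The model supplies one clean fact. Reflecting the half $\{\langle e,\xi\rangle<0\}$ across $e^{\perp}$ rewrites, for $e=v/|v|$,
\[
 \int_{B_1(0)}\chi_e(\xi)\,w(\xi+v)\,d\xi=\int_{B_1(0)\cap\{\langle v,\xi\rangle>0\}}\big(w(\xi+v)-w(\xi-v)\big)\,d\xi ,
\]
and on the domain of integration $|\xi+v|^2-|\xi-v|^2=4\langle v,\xi\rangle>0$, so by the \emph{strict radial monotonicity of the ground state $w$} the integrand is strictly negative and this quantity is $<0$ for every $v\ne0$. A first-order expansion at $v=0$ (legitimate since $w\in C^1$) shows it equals $-2c_0|v|+o(|v|)$ with $c_0=-\int_{B_1(0)\cap\{\langle e,\xi\rangle>0\}}w'(|\xi|)\frac{\langle e,\xi\rangle}{|\xi|}\,d\xi>0$ (independent of $e$ by rotational symmetry), and a compactness argument on the annulus $\{\rho_0\le|v|\le\frac14\}$ upgrades this to the uniform bound
\[
 \int_{B_1(0)}\chi_e(\xi)\,w(\xi+v)\,d\xi\le-c_0|v|\qquad\big(v\in\overline{B_{1/4}(0)}\setminus\{0\},\ e=v/|v|\big),
\]
with $c_0=c_0(w,n)>0$.

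The other ingredient, which lets me cope with the merely H\"older $\psi$, is a gain of regularity: writing $\int_{B_1(0)}\chi_e(\xi)\psi(\xi+v)\,d\xi=\int_{B_1(v)}\chi_e(\zeta-v)\psi(\zeta)\,d\zeta$, this is a signed average of $\psi$ over a ball moving with $v$, and differentiating in $v$ only displaces the sphere $\partial B_1(v)$ and the separating hyperplane $\{\langle e,\zeta-v\rangle=0\}$, so the derivative is a sum of integrals of $\psi$ over those two $(n-1)$-dimensional sets and is bounded by $C_n\|\psi\|_{L^\infty(B_2)}$ — crucially \emph{no} derivative of $\psi$ enters, so $v\mapsto\int_{B_1(0)}\chi_e(\xi)\psi(\xi+v)\,d\xi$ is $C^1$ on $B_{1/4}(0)$ with gradient $\le C_n\|\psi\|_{L^\infty}$. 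Uniqueness then follows quickly: if $v^{\ast}\in B_{1/4}(0)\setminus\{0\}$ were another local mass centre of $U$, testing with $e=v^{\ast}/|v^{\ast}|$ and using the normalization above gives $g_{v^{\ast}}(e)=a\int_{B_1(0)}\chi_e(\xi)w(\xi+v^{\ast})\,d\xi+\int_{B_1(0)}\chi_e(\xi)\big(\psi(\xi+v^{\ast})-\psi(\xi)\big)\,d\xi$, where the first summand is $\le-(1-\hat\delta)c_0|v^{\ast}|$ by the model bound and the second is at most $C_n\|\psi\|_{L^\infty}|v^{\ast}|$ in modulus by the $C^1$-estimate; hence $0=g_{v^{\ast}}(e)\le\big(-(1-\hat\delta)c_0+C_n\|\psi\|_{L^\infty}\big)|v^{\ast}|<0$ once $\bar\delta$ is small — a contradiction.

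For a boundary peak $q_i$ the plan is to flatten $\pom\cap B_{2\delta}(q_i)$ and take the even extension of $u$, as in the alternative definition of a boundary local mass centre; in the blown-up, flattened coordinates one again has $U=b_iw+\psi$ on $B_2(0)$ just as before — the flattening contributes only an $O(\eps)$ error and the even extension of the radial $b_iw$ is $b_iw$ itself — and since a boundary local mass centre has vanishing normal coordinate and is tested only against tangential $e$, the same argument with $v$ and $e$ confined to the flattened boundary hyperplane gives uniqueness. I expect the crux to be exactly the pairing in the previous paragraph: one must use the averaged nature of \eqref{cen1} to see that $g_v(e)$ is $C^1$ in the base point despite $u$ being only H\"older (without this the perturbation argument would not close), and one must use the strict monotonicity of $w$ to pin the model quantity below $-c_0|v|$; by comparison the remaining points — the decay estimates for the bump tails, the compactness upgrade, and the control of the flattening error — are routine.
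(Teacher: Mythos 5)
Your proof is correct, but it follows a different route from the paper's. The paper works one coordinate axis at a time: it sets $E_t(u)=\int_{B_\eps(te_1)\cap\{x_1>t\}}u-\int_{B_\eps(te_1)\cap\{x_1<t\}}u$, gets a sign change of $E_t$ across a $c_0\eps$-neighbourhood of $p_1$ from the radial monotonicity of $w$ (this is existence, via the intermediate value theorem), and gets uniqueness of the zero from $\frac{d}{dt}E_t<0$, which it deduces from the concavity of $w$ near the origin; it then repeats this for $e_2,\dots,e_n$ and declares $(t_1,\dots,t_n)$ to be the unique centre. Your argument replaces the axis-by-axis reduction by testing directly with $e=v^*/|v^*|$, and replaces the concavity/derivative step by the combination of (a) the quantitative model bound $\int_{B_1}\chi_e(\xi)w(\xi+v)\,d\xi\le -c_0|v|$ obtained by reflection and strict radial monotonicity, and (b) the Lipschitz bound $|\int\chi_e(\psi(\cdot+v)-\psi)|\le C_n\|\psi\|_{L^\infty}|v|$, which exploits that differentiating the averaged condition \eqref{cen1} in the base point only moves the sphere and the separating hyperplane and hence costs no derivative of $\psi$; the normalization $\int\chi_e\psi=0$ coming from Definition \ref{D2.1}(ii) is what makes the two bounds comparable for small $|v^*|$. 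What each approach buys: the paper's is shorter and also produces existence of the centre from \eqref{peak1} alone (which is used just after the lemma, where the authors argue that for a function merely satisfying \eqref{peak1} one may relabel $\hat{\bf p}$ as ${\bf p}$), whereas yours takes existence from part (ii) of the definition and proves only uniqueness — faithful to the statement, but if you want the stronger "well-definedness" reading you should note that your sign information $g_v(v/|v|)<0$ for $v\ne 0$ on $\p B_{1/4}$ also yields existence by a degree argument. Conversely, your version is more rigorous on two points where the paper is informal: it handles all test directions $e$ simultaneously rather than only the coordinate axes, and it makes explicit that $\frac{d}{dt}$ of the averaged quantity applied to the H\"older perturbation $\psi$ is controlled by $\|\psi\|_{L^\infty}$ alone, which is the precise reason the paper's monotonicity claim for $E_t(u)$ (as opposed to $E_t(w)$) survives the perturbation.
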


\begin{proof}
We prove that there exists a unique interior local mass centre for $u$ near $p_1$.
For simplicity,  assume that $p_1=0$.

Set $z_t= te_1$, where $e_1$ is the unit vector on the $x_1$-axis.
Denote 
$${\begin{split}
 R_t (u) &=\int_{B_{\eps}(z_t)\cap\{ x_1>t\} } u(x)dx, \\
 L_t (u)&=\int_{B_{\eps}(z_t)\cap\{ x_1<t\} } u(x)dx,
 \end{split}} $$
and $E_t(u)=R_t (u)-L_t(u)$.
Since $w$ is monotone in $|x|$,
by \eqref{peak1}, we conclude that $E_t>0$ for $t\in [-\eps, -c_0 \eps]$ and $E_t<0$ for $t=[c_0\eps, \eps]$,
provided that $\bar\delta$ is sufficiently small,
where $c_0>0$ is a constant with $c_0\to 0$ as $\bar\delta\to 0$.

 Moreover, since $w$ is concave near $x=0$,
it is easy to see that $\frac{d}{dt}  E_t (u) <0$ for $t\in (-c_0\eps, c_0\eps)$. 
Hence, there is a unique point $t_1\in (-c_0\eps, c_0 \eps)$
such that $E_{t_1}(u) =0$.

Repeating the above argument for $z_t=te_i$, $i=2, \cdots, n$, and defining $E_t$ accordingly,
we obtain $t_i$ such that $E_{t_i}=0$. 
Hence the point $(t_1, \cdots, t_n)$ is the unique interior local mass centre of $u$ near $p_1$.
\end{proof}

By Lemma \ref{L2.1}, 
the local mass  centres are uniquely determined for functions satisfying \eqref{peak1},
if $\bar\delta$ is sufficiently small.
Denote the local mass  centres by 
$\hat {\bf p} = (\hat p_1, \cdots, \hat p_k) $ and $\hat{\bf q}=(\hat q_1, \cdots, \hat q_l)$.
Then $\hat {\bf p}$ and $\hat {\bf q}$ depend continuously on $u$, and
they are very close to ${\bf p}$ and ${\bf q}$.
In fact, by a blow-up argument, it follows from \eqref{peak1a}  that 
$|\hat p_1-p_1|=o(\eps)$ as $\bar\delta\to 0$ and $\eps\to 0$ 
(for if not, then $0$ is not the mass centre of $w$).
Thus, in Definition \ref{D2.1}, we may assume that $\hat {\bf p} = {\bf p} $ and $\hat {\bf q} = {\bf q} $,
by replacing ${\bar\delta}$ in \eqref{peak1} by a different small constant if necessary.
Therefore, we can assume that ${\bf p}$ and ${\bf q}$ are local mass  centres of $u$,
such that (i)-(iii) of Definition \ref{D2.1} hold simultaneously.



\subsection{The coefficients $a_i, b_j$}\label{S2.3}

We cannot determine the constants $a_1, \cdots, a_k$ and $b_1, \cdots, b_l$ only by \eqref{peak1}.
In order that they are uniquely determined, we let them be the minimizer for 
$$\inf \int_\Om \Big[ u(x)-{\Small\text{$\sum_{i=1}^k$}} a_i w\big( {\Small\text{$\frac{x-p_i}{\eps}$ }}\big)
        - {\Small\text{$\sum_{j=1}^l $}} b_j w\big( {\Small\text{$\frac{x-q_j}{\eps}$ }}\big)\Big]^2 dx ,
$$
where the infimum is taken for all $a_1, \cdots, a_k, b_1, \cdots, b_l\in (1-\hat \delta, 1+\hat\delta)$. 
The integral is a quadratic polynomial of $a_1, \cdots, a_k$ and $b_1, \cdots, b_l$.
Hence there is a unique minimizer for the above infimum. 
For any given small $\hat \delta>0$, the infimum will be attained in the
interval $(1-\hat \delta, 1+\hat \delta)$  provided that $\bar \delta>0$ is small enough.


\subsection{Local maximum points}\label{S2.5}
Let $p$ be a peak in Definition \ref{D2.1}. 
A point $z\in B_{\delta}(p)$ is called a local maximum point near $p$ if 
$u(z)=\max\{ u(x)\ |\ x\in B_{{\delta}}(p)\cap\Om\}$. 

We introduced the local mass  centres for a peak function in Definition \ref{D2.1}.
The local mass centres are not maximum points in general. 
A natural question is whether one can take the local maximum points of $u$ as the peaks
in the definition, namely whether one can replace (ii) in Definition \ref{D2.1} 
by 
\begin{itemize}
\item [(ii)$'$] $p_1, \cdots, p_k $ and $q_1, \cdots, q_l$
are local maximum points of $u$.
\end{itemize}

For the Laplace operator ($p=2$), one can replace (ii) in Definition \ref{D2.1} by (ii)$'$.
However, for $p\ne 2$, the regularity theory of the $p$-Laplace equation
is not strong enough to allow us to replace  (ii)  by (ii)$'$.

\begin{lemma}\label{L2.2}
For any given constant $\lambda\in(0, 1)$, 
there is no solution $u$ to \eqref{NP} of the form \eqref{peak1} with finite energy,
such that
\beq\label{dN}
\lambda\eps<\text{dist}(q, \pom)<\lambda^{-1}\eps 
\eeq
as $\eps\to 0$, where $q$ is a local maximum point of $u$.
\end{lemma}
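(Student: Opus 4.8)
The plan is to argue by contradiction: suppose $u$ is a finite-energy solution of \eqref{NP} of the form \eqref{peak1}, and $q$ is a local maximum point of $u$ satisfying the balance condition \eqref{dN}, $\lambda\eps<d_q<\lambda^{-1}\eps$. First I would blow up at $q$: set $y=\frac{x-q}{\eps}$, $v_\eps(y)=u(\eps y+q)$, so that $v_\eps$ solves $-\Delta_p v_\eps = v_\eps^{q-1}-v_\eps^{p-1}$ on the rescaled domain $\Om^\eps_q$, with Neumann condition on $\pom^\eps$. By \eqref{peak2a} and \eqref{peak3a} the family $v_\eps$ is bounded in $W^{1,p}$ and in $C^\gamma$ uniformly, and by the interior/boundary regularity for the $p$-Laplacian \cite{DB93} it is bounded in $C^{1,\alpha}_{loc}$. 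Because \eqref{dN} forces $d_q/\eps$ to stay bounded between $\lambda$ and $\lambda^{-1}$, after passing to a subsequence the rescaled domains converge (in the local Hausdorff sense, after rotating so $\nu$ points in a fixed direction) to a \emph{half-space} $H=\{y_n>-t_0\}$ with $t_0\in[\lambda,\lambda^{-1}]$, and $v_\eps\to v$ in $C^1_{loc}$, where $v\ge0$ solves
\beq\label{L22limit}
-\Delta_p v = v^{q-1}-v^{p-1}\ \ \text{in}\ H,\qquad v_\nu=0\ \ \text{on}\ \p H,
\eeq
with finite energy $\int_H(|Dv|^p+|v|^p)<\infty$.

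Next I would extract the structure of the limit $v$. From \eqref{peak1a}, in the $y$ coordinates $u$ is $L^\infty$-close (within $\bar\delta$, plus the tails of the other peaks, which are $\delta$-apart and hence negligible near $q$ when $\delta=N\eps$ with $N$ large) to $b\,w(y)$ for the coefficient $b$ attached to $q$; since $b\to1$ and $\bar\delta\to0$, the limit is exactly $v(y)=w(y)$ on its domain — but $w$ lives on all of $\R^n$, whereas $v$ lives only on the half-space $H$. The resolution is the even-reflection trick already used in \S\ref{S2.4} for boundary mass centres: because $v$ satisfies the Neumann condition on $\p H$, its even reflection $\tilde v$ across $\{y_n=-t_0\}$ is a finite-energy nonnegative solution of $-\Delta_p \tilde v=\tilde v^{q-1}-\tilde v^{p-1}$ on all of $\R^n$. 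By the uniqueness of the ground state \cite{PS98, ST00} and its radial symmetry \cite{SZ99}, $\tilde v$ is a translate of $w$, say $\tilde v(y)=w(y-y^*)$; but $\tilde v$ is symmetric about the hyperplane $\{y_n=-t_0\}$, and $w$ is radially symmetric about a single point, so the only way a translate of $w$ can be reflection-symmetric about $\{y_n=-t_0\}$ is if its center lies on that hyperplane, i.e. $y^*_n=-t_0$. Thus $v(y)=w(y-y^*)$ with $y^*\in\p H$.

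Now I would derive the contradiction from the requirement that $q$ (i.e. $y=0$) is a \emph{local maximum} of $u$, hence $y=0$ is a maximum point of the limit $v$ over $H$ (using $C^1_{loc}$ convergence to pass the max through, together with the strong maximum principle / Harnack to rule out the maximum escaping to infinity, since $v$ has finite energy and therefore $v\to0$ at infinity — the exponential decay of $w$ from \cite{GNN, dMY05, LZ05b} makes this transparent). But $v(y)=w(y-y^*)$ attains its maximum only at $y=y^*$, and $w$ is radially decreasing, so $y^*=0$; combined with $y^*_n=-t_0$ this forces $t_0=0$, contradicting $t_0\in[\lambda,\lambda^{-1}]$ with $\lambda>0$. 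Alternatively, and more robustly if one does not want to invoke that the maximum of $w$ is its center: $0$ being a local max of $u$ means $0$ is a critical point in the interior of $H$, so $Dv(0)=0$; but $Dv(0)=Dw(-y^*)$, and since $w$ is radial and strictly decreasing, $Dw$ vanishes only at the origin, forcing $y^*=0$, hence again $t_0=0$, the desired contradiction.

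The main obstacle I anticipate is the compactness step: ensuring that the blow-up limit is genuinely \eqref{L22limit} on a half-space with no loss of mass and no degeneration of the boundary. Two points need care. First, one must check that the contributions of the \emph{other} peaks $p_i,q_{j'}$ ($q_{j'}\ne q$) to $u$ near $q$ vanish in the limit; this follows from the $\delta$-apart condition together with the decay of $w$, but the decay rate (only polynomial a priori unless one cites exponential decay) must be strong enough relative to $\delta/\eps$ — taking $\delta=N\eps$ with $N$ large and using exponential decay of $w$ is the clean route. Second, and more seriously, the $p$-Laplacian is only $C^{1,\alpha}$-regular, not $C^2$, so one cannot linearize; the convergence $v_\eps\to v$ must be obtained purely from the De Giorgi–Nash–Moser and $C^{1,\alpha}$ estimates of \cite{DB93} applied on fixed compact subsets of the limiting half-space, together with a diagonal argument, and the passage to the limit in the equation must be done weakly in $W^{1,p}_{loc}$ using the $C^1$-convergence of $Dv_\eps$ to identify $|Dv_\eps|^{p-2}Dv_\eps \to |Dv|^{p-2}Dv$. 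Once compactness is in hand, the uniqueness-of-ground-state and reflection arguments are routine.
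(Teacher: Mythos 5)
Your proposal is correct and follows essentially the same route as the paper: blow up at the maximum point, pass to a half-space limit with Neumann condition, reflect evenly, identify the extension with a translate of $w$ via uniqueness of the ground state, and derive a contradiction from the location of the maximum (the paper phrases this as $v$ having two local maxima while $w$ has one; your version via $Dw$ vanishing only at the origin is the same observation). The only cosmetic slip is the claim that the limit equals $w$ exactly because ``$\bar\delta\to 0$'' --- $\bar\delta$ is a fixed small constant, so one only gets $|v-w|\le\bar\delta$ as in the paper --- but this is not used in your argument, which correctly relies on uniqueness of the ground state instead.
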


\begin{proof}
If the conclusion is not true, there exist a sequence $\eps_j\to 0$ 
and functions $u_j$ such that \eqref{peak1} holds 
and its maximum point $q_j$ satisfies \eqref{dN}.
Let $y=(x-q_j)/\eps_j$.
By \eqref{peak1} and the regularity of the $p$-Laplace equation, 
$u_j$ converges locally uniformly to a function $v$ in the coordinates $y$.
From \eqref{dN}, and rotating the coordinates if needed,
the domain $\{y=(x-q_j)/\eps_j\ |\ x\in\Om\}$ converges to a half-space
$\{y_n>-a\}$ for some constant $a\in [\lambda, \lambda^{-1}]$.
Since $u_j$ is a solution to \eqref{NP}, the limit $v$ satisfies the Neumann condition
$v_\nu=0$ on $\{y_n=-a\}$. 
By even extension, $v$ is defined in the entire space $\R^n$.
We derive from \eqref{peak1} that
\beq\label{diff}
|v-w|\le \bar\delta.
\eeq
But $v$ has finite energy. Hence, by the regularity of the $p$-Laplace equation, $v\to 0$ as $x\to \infty$.
Therefore, $v(x)=w(x-p)$ for some point $p$.
By our construction,  $v$ has at least two local maximum points, but $w$ has a unique maximum point, 
which leads to a contradiction.
\end{proof}
 
Note that the above argument also implies that the Fréchet derivatives of the functional 
\beq\label{Ft0}
\mathcal K (u) = \frac{ \eps^{-n}}p \int_\Om \Big[ \eps^p |Du|^p  + |u|^p\Big]dx
                 -\frac{\eps^{-n} }{q}\int_\Om |u|^{q} dx ,\ \ \ u\in W^{1, p}(\Om)
\eeq
do not vanish at a function $u$, 
if $u$ satisfies \eqref{peak1}, and one of its maximum points satisfying \eqref{dN}.

\begin{corollary}  
Let $u=u_\eps$ be a peak solution to \eqref{NP} with finite energy.
Let $p=p_\eps\in\Om$ be a local maximum point of $u$.
Then either $d_p/\eps\to 0$, or  $d_p/\eps\to \infty$,  as  $\eps\to 0.$
\end{corollary}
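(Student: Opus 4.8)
The plan is to derive the corollary directly from Lemma \ref{L2.2} via a contradiction-plus-subsequence argument; all of the analytic substance (the blow-up at the maximum point, the half-space limit, the even reflection, and the ``two maxima versus one'' contradiction with the uniqueness of $w$) is already contained in that lemma, so what remains is a short logical reduction.

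Concretely, suppose the conclusion fails for a family $u=u_\eps$ of finite-energy peak solutions to \eqref{NP} with local maximum points $p=p_\eps\in\Om$. Since $p_\eps\in\Om$ we have $d_{p_\eps}=\text{dist}(p_\eps,\pom)>0$, so $d_{p_\eps}/\eps$ is a positive quantity that, by assumption, neither tends to $0$ nor tends to $\infty$ as $\eps\to 0$. An elementary fact about positive sequences in $(0,\infty]$ --- failure to converge to $0$ forces a positive $\limsup$, failure to converge to $\infty$ forces a finite $\liminf$ --- then yields a sequence $\eps_j\to 0$ and a finite constant $a>0$ with $d_{p_{\eps_j}}/\eps_j\to a$. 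Choosing $\lambda\in(0,1)$ with $\lambda<\min\{a,a^{-1}\}$, we get $\lambda\,\eps_j<d_{p_{\eps_j}}<\lambda^{-1}\eps_j$ for all large $j$; that is, \eqref{dN} holds with $q=p_{\eps_j}$, a local maximum point of the finite-energy solution $u_{\eps_j}$ of the form \eqref{peak1}. This directly contradicts Lemma \ref{L2.2}, and the corollary follows.

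I do not anticipate a genuine obstacle: the only step requiring any care is the extraction, from a positive sequence failing both $\to 0$ and $\to\infty$, of a subsequence with a finite positive limit, and this is routine. If one wishes to add context, one may also observe that for an actual solution every interior local maximum point in fact lies within $O(\eps)$ of one of the peaks $p_i,q_j$: at such a point $u\ge 1$, since where $0<u<1$ the equation gives $\eps^p\Delta_p u=u^{p-1}-u^{q-1}>0$ (recall $q>p$), so $u$ is $p$-subharmonic there and cannot attain an interior maximum; combined with \eqref{peak1} and the decay of $w$, this localizes $p_\eps$ near a peak. This remark is not needed for the proof above, which uses only the statement of Lemma \ref{L2.2}.
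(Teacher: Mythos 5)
Your reduction is exactly the intended one: the paper offers no separate proof of this corollary, treating it as an immediate consequence of Lemma \ref{L2.2}, and your argument (negate the dichotomy, extract a subsequence on which $d_{p_\eps}/\eps$ is trapped in $[\lambda,\lambda^{-1}]$, contradict Lemma \ref{L2.2}) is precisely what is meant. One small logical slip is worth fixing: a positive $\limsup$ together with a finite $\liminf$ does \emph{not} by itself yield a subsequence converging to a finite positive limit (the family could oscillate, with subsequential limit set $\{0,\infty\}$). What Lemma \ref{L2.2} actually rules out is the existence of \emph{any} subsequence $\eps_j\to 0$ with $\lambda\le d_{p_{\eps_j}}/\eps_j\le\lambda^{-1}$, i.e.\ every subsequential limit of $d_{p_\eps}/\eps$ in $[0,\infty]$ is $0$ or $\infty$; this is the (standard, subsequential) reading of the corollary, and under that reading your proof is correct once the extraction step is phrased as ``if the conclusion fails, some subsequence of $d_{p_\eps}/\eps$ is bounded above and below by positive constants, hence (after a further extraction) converges to some $a\in(0,\infty)$.'' Your closing remark that interior maxima must satisfy $u\ge 1$ (by $p$-subharmonicity where $0<u<1$) and hence lie within $O(\eps)$ of a peak is a nice observation consistent with the paper, though, as you say, not needed here.
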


\begin{corollary}  
Let $u=u_\eps$ 
and $p=p_\eps\in\Om$ be as above. 
Then 
\beq\label{es}
\sup_{B_{\delta}(p) } \Big|u(x)-w\Big({\Small\text{$\frac{x-p}{\eps}$}}\Big)\Big|\to 0. 
\eeq
\end{corollary}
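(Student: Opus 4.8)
The plan is to blow up the solution at the maximum point $p=p_\eps$ and identify the rescaled limit with the ground state $w$, using the dichotomy of the preceding corollary, the $C^{1,\gamma}$-regularity theory for the $p$-Laplacian, and the uniqueness of $w$. Suppose \eqref{es} fails. Then there are $\eps_j\to0$ and $c>0$ with $\sup_{B_{\delta}(p_{\eps_j})\cap\Om}\big|u_{\eps_j}(x)-w\big(\tfrac{x-p_{\eps_j}}{\eps_j}\big)\big|\ge c$. Put $y=\tfrac{x-p_{\eps_j}}{\eps_j}$ and $v_j(y)=u_{\eps_j}(\eps_j y+p_{\eps_j})$; then $v_j$ solves $-\Delta_p v_j=v_j^{q-1}-v_j^{p-1}$ in $\Om^{\eps_j}$ and, by \eqref{peak2a}--\eqref{peak3a}, is uniformly bounded in $W^{1,p}\cap C^\gamma$ on each fixed ball. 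By the interior and (after flattening the boundary) boundary $C^{1,\gamma}$-estimates for the $p$-Laplacian \cite{DB93}, $\{v_j\}$ is precompact in $C^1_{loc}$, so after passing to a subsequence $v_j\to v$ locally in $C^1$; moreover $v\not\equiv0$, since $p_{\eps_j}$ is a local maximum lying in a peak region where \eqref{peak1} bounds $u_{\eps_j}$ from below, whence $v(0)=\lim_j u_{\eps_j}(p_{\eps_j})>0$.

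By the previous corollary, either $d_{p_{\eps_j}}/\eps_j\to\infty$, in which case the domains $\Om^{\eps_j}$ exhaust $\R^n$ and $v$ is a nonnegative, nontrivial, finite-energy solution of \eqref{ST00} in $\R^n$; or $d_{p_{\eps_j}}/\eps_j\to0$, in which case, after a rotation, the domains converge to a half-space $\{y_n>0\}$, the limit satisfies $v_\nu=0$ on $\{y_n=0\}$, and its even reflection across $\{y_n=0\}$ is a nonnegative, nontrivial, finite-energy entire solution of \eqref{ST00}. In either case, finite energy together with the regularity of the equation forces the limit (resp.\ its reflection) to tend to $0$ at infinity, exactly as in the proof of Lemma \ref{L2.2}; hence, by the uniqueness \cite{PS98, ST00} and radial symmetry \cite{SZ99} of the ground state, $v=w(\cdot-p_0)$ for some $p_0\in\R^n$. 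Since $p_{\eps_j}$ is a local maximum point of $u_{\eps_j}$, the origin is a local maximum point of $v$ (resp.\ of its even extension), and $w$ attains its maximum only at the origin; therefore $p_0=0$, i.e.\ $v=w$ on the limit domain.

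It remains to transfer this to the original variables. Since $\delta=O(\eps)$ in Definition \ref{D2.1}, the ball $B_{\delta}(p_{\eps_j})$ corresponds in the $y$-coordinates to a fixed ball $B_{\delta/\eps_j}(0)$ of bounded radius, so the convergence $v_j\to w$ in $C^0_{loc}$ gives $\sup_{B_{\delta}(p_{\eps_j})\cap\Om}\big|u_{\eps_j}(x)-w\big(\tfrac{x-p_{\eps_j}}{\eps_j}\big)\big|=\sup_{y\in B_{\delta/\eps_j}(0)\cap\Om^{\eps_j}}|v_j(y)-w(y)|\to0$, which contradicts the choice of $c$ and proves \eqref{es}. (If one wishes to allow a fixed $\delta$ independent of $\eps$, one must first establish a uniform decay estimate for $u_\eps$ away from its peaks, cf.\ \cite{dMY05, LZ05b}, in order to promote local convergence to uniform convergence on the expanding ball $B_{\delta/\eps}(0)$.)

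The main obstacle is the identification of the blow-up limit: showing that a nonnegative, nontrivial, finite-energy solution of \eqref{ST00} in $\R^n$ (or in a half-space with homogeneous Neumann data) is necessarily a translate of $w$. This rests on the decay-at-infinity consequence of finite energy for the $p$-Laplacian, on the cited uniqueness and radial symmetry of the ground state, and, in the near-boundary alternative, on the even reflection across the asymptotically flat limit boundary. Once this is settled, fixing $p_0=0$ from the maximum-point property and passing back to the $x$-variables are routine.
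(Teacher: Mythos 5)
Your proposal is correct and follows essentially the same route as the paper: blow up at the local maximum point, invoke the dichotomy $d_p/\eps\to 0$ or $d_p/\eps\to\infty$ from the preceding corollary, identify the limit with $w$ (after even reflection in the half-space case) via finite energy, decay at infinity, and uniqueness of the ground state, exactly as in the proof of Lemma \ref{L2.2}. The paper's argument is only a brief sketch of this; your version supplies the compactness, nontriviality, and normalization $p_0=0$ details, and your closing caveat about $\delta$ is moot here since the paper takes $\delta=N\eps$, so $B_\delta(p)$ is a fixed ball in the rescaled variables.
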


Letting $y=\frac{x-p}{\eps}$, then $u_\eps(y)$ converges to a limit $v(y)$.
As in the proof of Lemma \ref{L2.2}, 
$v$ is either a solution to \eqref{NP} in the entire space $\R^n$ or in half space.
In the former case, $v=w$. In the latter case, $v=w$ after an even extension.
Hence, we obtain \eqref{es}.

Let $u$ be a multi-peak solution to \eqref{NP} with finite energy.
It is unclear to the authors 
whether a local maximum point of $u$ near $\pom$ must be on $\pom$.
It may not be true for equation with variable coefficients, even for the simplest model 
\beq\label{NPv}
{\begin{split}
-\eps^2 \Delta u & = b(x) u^{q-1} -  u \  \ \text{in}\ \Om, \\
u_\nu   &=0  \ \ \ \ \text{on}\ \pom , \\ 
u  &>0 \ \ \ \  \text{in}\  \Om,
\end{split}}
\eeq
where $b\in C^0(\bom)$ with  $b>0$.
If $b$ is not Lipschitz continuous, such as $b(x)=1+\sqrt {d_x}$,
the maximum point of a peak solution, 
including the least energy solution,  may not lie in the boundary.
To illustrate this, it suffices to compute $\frac{d}{ds}\I\big(w(\frac{x-z_s}{\eps})\big)$, where 
$z\in\pom$ and $z_s\in \Om$ such that $d_{z_s}=|z_s-z|=s$,
and note that the solution is very close to $w$, 
where $\I$ is the functional given in \eqref{Ft}.

\vskip25pt

\section{A new minimax principle}\label{S3}

\vskip10pt

\subsection{A new minimax principle}\label{S3.1}
Denote
\beq\label{Ft}
\I(u)=\frac{\eps^{-n} \int_\Om (\eps^p\, |Du|^p+|u|^p)}{\big[\eps^{-n} \int_\Om |u|^{q}\big]^{p/q}},\ \ \ u\in W^{1,p}(\Om).
\eeq
For any integers $k\ge 0, l\ge 0$ ($k+l\ge 1$), 
let the notations {\bf p}, {\bf q},  {\bf a}, {\bf b} and  $\Phi_{k,l} ({\bf p, q},  {\bf a, b})$ be as in \S\ref{S2.1}.
Denote
\beq\label{Spq}
S({\bf p, q},  {\bf a, b})=\inf_{u\in \Phi_{k,l} ({\bf p, q},   {\bf a, b})} \,\I(u) .
\eeq
Let
\beq\label{S*}
S^*=\sup_{({\bf p,q})\in \Om^k\times \pom^l, \, ({\bf a,b})\in M} \ S({\bf p, q},  {\bf a,b}), 
\eeq
where
${\bf p, q}$ satisfy the $\delta$-apart condition, 
$$
M  =  \big\{(a_1, \cdots, a_k, b_1,\cdots, b_{l})\in\R^{k+l} \ |\   |a_i-1|<\hat\delta, |b_j- 1|<\hat\delta\big\}.
$$ 
  
We have the following minimax principle,
which implies Theorem \ref{T1.1}.

\begin{theorem}\label{T3.1}
For sufficiently small $\eps>0$, the minimax $S^*$ given in \eqref{S*} is a critical value 
of the functional $\I$, and the corresponding critical point is a peak solution to \eqref{NP}
with $k$ interior peaks and $l$ boundary peaks.
\end{theorem}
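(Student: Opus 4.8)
The plan is to realize $S^*$ as a critical value by combining a two-parameter minimax scheme over the ``peak data'' $({\bf p},{\bf q},{\bf a},{\bf b})$ with a gradient flow that decreases $\I$, and to use the uniqueness of the local mass centres and coefficients (Lemma \ref{L2.1} and \S\ref{S2.3}) together with topological degree theory to show the flow cannot escape the admissible class. First I would set up the geometry: for each fixed $({\bf p},{\bf q},{\bf a},{\bf b})$ in the admissible set (with ${\bf p},{\bf q}$ satisfying the $\delta$-apart condition and $({\bf a},{\bf b})\in M$), the model function $w_{{\bf p,q},{\bf a,b},\eps}$ lies in $\Phi_{k,l}$, and one computes $\I(w_{{\bf p,q},{\bf a,b},\eps})$ asymptotically as $\eps\to 0$: it equals $(k+l)$ times the ground-state value $\J(w)$ up to lower-order interaction terms that are exponentially small in $\delta/\eps$ for interior peaks and governed by the mean curvature of $\pom$ for boundary peaks. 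The key point is that $S({\bf p},{\bf q},{\bf a},{\bf b})=\inf_{\Phi_{k,l}}\I$ is slightly below $\I(w_{{\bf p,q},{\bf a,b},\eps})$, is continuous in the parameters, and that $S^*$ is attained at some interior configuration — here I would use the $\delta$-apart constraint only as a barrier and show, via the curvature/distance expansions, that the supremum over $({\bf a},{\bf b})\in M$ is achieved at an interior point of $M$ (because $\I$ is, to leading order, a concave function of the coefficients near ${\bf a}={\bf b}=\mathbf 1$), and that moving peaks toward $\pom$ or toward each other strictly decreases $S$, so the sup over $({\bf p},{\bf q})$ is not achieved on the degenerate part of the boundary of the configuration space except possibly by boundary peaks $q_j$ genuinely sitting on $\pom$, which is exactly the desired conclusion.

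Next I would run the negative gradient flow of $\I$ (constructed in \S\ref{S4}) starting from a near-optimal peak function $u_0\in\Phi_{k,l}({\bf p}^*,{\bf q}^*,{\bf a}^*,{\bf b}^*)$ with $\I(u_0)$ close to $S^*$. Since $\I$ decreases along the flow and $\I\ge S^*$ cannot hold strictly on the whole trajectory if $S^*$ is not critical (standard deformation argument: if there were no critical point at level $S^*$, a pseudo-gradient flow would push $\I$ strictly below $S^*-c$ on a neighborhood of the level set), the obstruction is to guarantee that the flow line stays inside the family $\bigcup\Phi_{k,l}$ long enough — i.e., that the number of peaks is conserved, the peaks stay $\delta$-apart, and the coefficients stay in $M$. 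This is where the degree theory enters: along the flow, track the local mass centres $\hat{\bf p}(u_t),\hat{\bf q}(u_t)$ (well-defined and continuous by Lemma \ref{L2.1}) and the optimal coefficients $\hat{\bf a}(u_t),\hat{\bf b}(u_t)$ (well-defined by \S\ref{S2.3}); these give a continuous map from the flow into the parameter space. Because $S^*$ is the \emph{maximum} of $S(\cdot)$ over parameters, at the optimal configuration the derivative of $S$ in the parameters vanishes, and a degree argument (Brouwer degree of the parameter-derivative map being nonzero near the max, using concavity in $({\bf a},{\bf b})$ and the curvature expansion in $({\bf p},{\bf q})$) forces the flow, if it tried to leave the admissible set, to pass through a configuration where $\I$ has already dropped below $S^*$ — contradicting $\I\ge S^*$ at the putative minimax. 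Hence the flow remains in $\overline{\bigcup\Phi_{k,l}}$, and its $\omega$-limit is a critical point $u^*$ at level $S^*$.

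Finally I would verify that $u^*$ is genuinely a peak solution with $k$ interior and $l$ boundary peaks and not a degenerate limit. A critical point of $\I$ is, after rescaling, a weak solution of \eqref{NP}; by the regularity theory for the $p$-Laplace equation \cite{DB93} it is $C^{1,\gamma}$, so \eqref{peak3} holds; the energy bound \eqref{peak2} is automatic from $\I(u^*)=S^*$ and the sharp value of $\J(w)$. The peaks cannot merge (that would make $\I(u^*)$ strictly less than $(k+l)\J(w)+o(1)$, hence $S^*$ would not be the max), cannot disappear into the bulk at scale $\gg\eps$ or $\ll\eps$ relative to $\pom$ (Corollary after Lemma \ref{L2.2} and Lemma \ref{L2.2} itself rule out the intermediate-distance case), and the blow-up limit at each peak is exactly $w$ by the uniqueness of the ground state \cite{PS98, ST00, SZ99}; for boundary peaks the maximization forces $d_{q_j}=o(\eps)$, so after a further argument $q_j\in\pom$. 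The main obstacle, and the technical heart of the argument, is the \textbf{invariance of the admissible class under the gradient flow} — equivalently, the degree-theoretic ``no-escape'' statement in the previous paragraph — since it must be proved without the Liapunov--Schmidt reduction or any spectral information on the linearized $p$-Laplacian; the remedy is to exploit that $S^*$ is a maximum (not a saddle) in the finite-dimensional parameters, so that the relevant degree is that of a gradient near a nondegenerate maximum, which is computable, while the infinite-dimensional ``fibre'' direction is handled purely by the monotone decrease of $\I$ along the flow together with the a priori $C^\gamma$ and energy bounds built into Definition \ref{D2.1}.
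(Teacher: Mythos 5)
Your overall architecture (minimax over peak data, descent flow, degree theory to prevent escape) matches the paper's, but two of your key steps are not the ones that actually make the argument work, and one of them contains a genuine gap. First, your asymptotics are off: the leading value of $\I(w_{{\bf p,q},{\bf a,b},\eps})$ is not ``$(k+l)$ times $\J(w)$'' but $H({\bf a,b})\,S_0+o(1)$ with $H({\bf a,b})\le (k+\frac l2)^{1-p/q}$ --- the quotient structure of $\I$ means energies do not add linearly, and each boundary peak contributes a half-ball factor $\frac12$. More importantly, you propose to locate the supremum via ``curvature/distance expansions'' of the interaction terms; for $p\ne 2$ no such expansions are available (this is precisely why Liapunov--Schmidt fails here), and the paper deliberately avoids them, using instead the soft monotonicity Lemma \ref{L3.2} together with Lemmas \ref{L3.4}--\ref{L3.5}, which give a fixed quantitative drop $c_N$ below $S^*$ whenever the $\delta$-apart condition is violated. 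No boundary mean curvature enters anywhere.

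The genuine gap is in your no-escape/degree step. You run a \emph{single} flow line from a near-optimal initial datum and assert that it satisfies $\I\ge S^*$ along the whole trajectory, invoking a ``standard deformation argument'' and the Brouwer degree of a ``parameter-derivative map near a nondegenerate maximum.'' Neither is available: the set of peak functions is not open in $W^{1,p}(\Om)$, so the standard deformation lemma does not apply (the paper states this explicitly as the motivation for the whole construction), and nondegeneracy of $S(\cdot)$ at its maximum in $({\bf p,q})$ is exactly the kind of second-order information one cannot obtain for the $p$-Laplacian. Moreover, a single near-optimal flow line has no reason to stay above level $S^*$. The paper's mechanism is different and essential: it runs the \emph{entire family} of flows $u_\Lambda(\cdot,t)$ indexed by $\Lambda\in G$ simultaneously, freezes any flow line once its energy drops to $S^*_s-\sigma$, tracks the induced continuous map $T_t:\Lambda\mapsto\Lambda'_t$ via the local mass centres and normalized coefficients, shows $T_t=Id$ on $\p G$ (because the flow is frozen there already at $t=0$ by Lemmas \ref{L3.4}--\ref{L3.5}), and concludes $T_t(G)=G$ by degree of the identity. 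Surjectivity of $T_t$ is what guarantees that \emph{at every time} $\tau$ some $\Lambda_\tau$ satisfies $\I_{s,\eta}(u_{\Lambda_\tau}(\cdot,\tau))\ge S^*_s$; a compactness argument in $\tau$ then produces one $\Lambda_0$ whose flow line stays above $S^*_s$ for all time and hence converges to a critical point. Your proposal is missing this family-plus-surjectivity step, and without it the claim that the chosen trajectory never drops below $S^*$ is unsupported. (A further technical point you omit: the flow must be regularized, $s>0$ and the cutoff $\eta$, to have enough parabolic regularity for the mass centres and coefficients to depend continuously on $t$, and Lemmas \ref{L4.4}--\ref{L4.6} are needed to show the flow is frozen \emph{before} it can traverse any of the escape conditions.)
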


When $p=2$, multi-peak solutions were obtained in \cite{CK99,  DY99, DY99b, Gui96, GW00, GWW, Wa95}, 
among many others.
Let $u$ be the solution obtained in Theorem \ref{T3.1}
with peaks {\bf p, q} and coefficients {\bf a, b}.
Similarly to \cite{DY99}, we have the following properties:
$${\begin{split}
 & d_{p_i}/\eps\to\infty,\ \ 1\le i\le k,  \\
 &  |p_i-p_j|/\eps\to \infty,\ \ i\ne j, \ \ \text{(if $k>1$)},\\
 &  |q_i-q_j|/\eps\to \infty, \ \ i\ne j,\ \ \text{(if $l>1$)}, \\
 & a_i\to 1, b_j\to 1, \ \ 1\le i\le k,\ \ 1\le j\le l,
 \end{split}}$$
as $\eps\to 0$.  
These properties are easily seen from our argument 
(by Lemmas \ref{L3.1}, \ref{L3.4}, \ref{L3.5}), 
because the solution takes the supremum in \eqref{S*}.

\subsection{Remarks}\label{S3.2}

Denote by $\Phi(q)$ the set of peak functions with a single peak at $q$.
When $k=1$ and $l=0$,  the minimax \eqref{Spq}-\eqref{S*} becomes
\beq\label{S*1}
S^*=\sup_{q\in\Om_{{\delta}}} \ \inf_{u\in \Phi(q) } \,\I(u) .
\eeq
When $k=0, l=1$,  \eqref{Spq}-\eqref{S*} becomes
\beq\label{S*2}
S^*=\sup_{q\in\pom} \ \inf_{u\in \Phi(q) } \,\I(u),
\eeq
In both cases above, 
the functional $\I(u)$ is independent of the coefficients {\bf a} and ${\bf b}$,
since $\I(u)$ is homogeneous.
In \eqref{S*1}, the supremum is attained over $q\in\Om_\delta$, 
because we assume that $d_q>\delta$ by the $\delta$-apart condition in Definition \ref{D2.1}.

From \eqref{S*1} and \eqref{S*2}, our solution is not the least energy solution.
For a given point $z\in\bom$, the minimizer for the infimum \eqref{S*1} 
corresponds to the Mountain Pass Solution by restricting the functional $\I$ 
to the Banach space 
$$W^{1,p}_q(\Om)=\{u\in W^{1,p}(\Om)\ |\ \text{the mass centre of $u$ is located at the point $q$}\}. $$
The supremum in \eqref{S*1} indicates that the critical point has the highest energy
among all such Mountain Pass Solutions.
When $k=0, l=1$, according to \cite{NT91},
our solution should have its peak {\it near} the boundary point 
with the smallest mean curvature if the domain is convex,
or with the largest negative mean curvature otherwise, at least in the case $p=2$.
We refer the reader to \cite{LZ05a, LZ07}, 
where the least energy solution in the case $p\ne 2$ was obtained and it was shown that 
the maximum point of the solution is very close to the boundary point 
with maximal mean curvature. 
However, it remains unclear whether the maximum point is located on the boundary.

\vskip5pt
\noindent {\it Remark on the coefficients {\bf a}, {\bf b}}.\\
({\it{i\,}}) When $k=2$ and $l=0$, one has ${\bf p}=(p_1, p_2)$.
For any $u\in \Phi_{k, l}({\bf p,q,a,b})$, by definition, there exist functions $u_1\in \Phi(p_1)$ and $u_2\in \Phi(p_2)$
such that $u=u_1+u_2$ (more precisely, $u=u_1+u_2+o(1)$, where the term $o(1)$ can be absorbed into either  $u_1$ or  $u_2$).
Hence, we can formally write 
\beq\label{dec}
\Phi_{k, l}({\bf p,q,a,b})=\Phi(p_1)\oplus\Phi(p_2).
\eeq
This decomposition becomes more obvious 
when $\Om$ is the union of two disjoint sub-domains $\Om_1$ and $\Om_2$, 
with $p_1\in\Om_1$, $p_2\in\Om_2$.
In this case, the solution we seek consists of two parts,
each being a single-peak solution localized in $\Om_1$ and $\Om_2$, respectively.
To balance these two single peak solutions, 
we introduce the coefficients $a_1, a_2$. 
The concavity property discussed in \S\ref{S3.5} suggests that one should look for maximizers for $a_1, a_2$ near 1. 
\\[5pt]
({\it{ii\,}}) For general $k, l\ge 1$,  a similar decomposition to \eqref{dec} holds.
The role of the coefficients $a_1, \cdots, a_k$ and $b_1, \cdots, b_k$ is
to balance the size of the solution near each peak.

 \vskip5pt
\noindent {\it Remark on the functional $\I$}.\\
Instead of the functional $\I$, we may consider the functional $\mathcal K$ in \eqref{Ft0}.
In analogy with the Mountain Pass Lemma, 
we introduce   
$$Q({\bf p, q},  {\bf a, b})=\inf_{u\in \Phi_{k,l} ({\bf p, q},  {\bf a, b})}  \sup_{s>0} \mathcal K(su), $$
and corresponding to \eqref{S*}, we define 
$$Q^*=\sup_{({\bf p,q})\in \Om^k\times \pom^l, \, ({\bf a,b})\in M} \ Q({\bf p, q},  {\bf a,b}). $$
The functional $\mathcal K$ allows us to deal with the Neumann problem \eqref{NP}
with more general right hand side functions $f(u)$,
namely replacing the function $u^{q-1}-u^{p-1}$ by a more general function $f(u)$, 
assuming that $h(s)=:\mathcal K(sw)$ has a {\it unique} maximum point $\hat s=\hat s_w>0$ 
and that $h$ is strictly concave at $\hat s$, for the ground state solution $w$.
There is no loss of generality in assuming that $\hat s=1$.
One also assumes that 
equation \eqref{ST00} with right hand side $f(u)$ has a unique ground state solution $w$.
To carry out our argument to the functional $\mathcal K$, 
one shows that for a given initial condition $u_0$ in $\Phi_{k,l} ({\bf p, q},  {\bf a, b})$ such that $\hat s_{u_0}=1$,  
there is a deformation $u_0\to u_t\ (t>0)$
such that  $\hat s_{u_t}=1$ and the functional $\mathcal K(u_t)$ is decreasing.
We will treat the functional $\mathcal K$ in a separate paper.


\subsection{Strategy of the proof}\label{S3.3}
The set of peak functions is only a small subset of the Sobolev space $W^{1, p}(\Om)$,
and it may not be an open set in $W^{1, p}(\Om)$.
Therefore, the usual minimax principle cannot be applied directly.
To overcome this difficulty, we introduce a parabolic $p$-Laplace equation as a gradient flow for the functional $\I$ defined in \eqref{Ft}, 
and establish the existence of critical points of $\I$  by 
combining the gradient flow with topological degree theory.

In the case $k=1, l=0$, 
we will consider a family of the gradient flows given by \eqref{PE}-\eqref{PEb} simultaneously,
with initial condition $\phi_p=:w(\frac{x-p}{\eps})$ for all points $p\in\bom$.
For any time $t>0$, the solution $u_p(\cdot, t)$ is a peak function 
if $\I(u_p(\cdot, t))>S^*-\sigma$,
where $\sigma$ can be any small positive constant (in particular, we choose $\sigma=e^{-1/\eps^2}$).
If there exists a time $t^*$ such that $\I(u_p(\cdot, t))=S^*-\sigma$, 
we freeze the flow by setting $u_p(\cdot, t)=u_p(\cdot, t^*)$ for all $t>t^*$.
In this way, the solution $u_p(\cdot, t)$ is well-defined for all $t>0$.

Denote $z_{p, t}$ the local mass centre of $u_p(\cdot, t)$,
which is unique according to Lemma \ref{L2.1}.
Define the mapping $T_t:\ p\to z_{p, t}$. 
Then, $T_t$ is continuous in both $p$ and $t$.
We show that $\I(\phi_p)<S^*-\sigma$ if $d_p\le \delta$,
where $\delta=N\eps$ for some fixed large constant $N>1$.
It follows that $T_t$ is the identity mapping at $t=0$ and on $\pom_{\delta}$ for all $t\ge 0$.
Moreover, we prove that whenever $z_{p, t}$ is close to $\pom_{\delta}$, it holds that 
$\I(u_p(\cdot, t))\le S^*-\sigma$, and thus the flow is frozen.

Therefore, for all $t>0$, by the Brouwer degree theory, 
$T_t(\bom_{\delta})$ is a cover of $\bom_{\delta}$.
By the definition of $S^*$, for any time $t>0$, there is a point $p^*=p^*_t\in\bom_{\delta}$ 
such that $\I(u_{p^*}(\cdot, t))>S^*$.  Since \eqref{PE}-\eqref{PEb}  is a descent gradient flow,
one sees that $\I(u_{p^*}(\cdot, t'))>S^*$ for all time $t'<t$.
Assume that the local mass centre $z_{p^*, t}$ converges to a point $z^*\in\Om$ as $t\to\infty$.
Then $\I(u_{z^*}(\cdot, t))>S^*$ for all time $t>0$,
and $u_{z^*}(\cdot, t)$ converges to a solution of \eqref{NP} with one interior peak $z^*$.

The proof for general $k$ and $l$ follows a similar idea. However, we consider a larger family 
of the gradient flows \eqref{PE}-\eqref{PEb} simultaneously, 
with initial condition given in \eqref{weps}.

\subsection{Ground state solution to \eqref{ST00}}\label{S3.4}

It was shown in \cite{ST00} that, for $1<p<n$, equation \eqref{ST00} admits a unique ground state solution $w$, 
i.e., a positive solution that decays to zero at infinity and is rotationally symmetric.
Furthermore, 
a precise exponential decay estimate for $w$ was established in \cite{LZ05b}, namely,
\beq\label{PD}
\lim_{r\to\infty} w(r) r^{\frac{n-1}{p(p-1)}} e^{(\frac{1}{p-1})^{\frac{1}{p}}r} =C
\eeq
for a positive constant $C$.

Here we present a simple proof of the exponential decay 
(but not as strong as \eqref{PD}),
which may apply to more general right hand side of \eqref{ST00}.
Let $w$ be a radial solution, written as $w=w(r)$.
Then,  equation \eqref{ST00} can be rewritten in the form
\beq\label{ST01}
|w'|^{p-2} \big[ w''+\frac{n-1}{p-1} \frac {w'}{r}\big] = \frac{1}{p-1}\big(w^{p-1} -  w^{q-1} \big)\ge \frac  {w^{p-1}}{p},
\eeq
when $w>0$ is small.
Multiplying by $pw'$, we get
$$
\big( |w'|^p-\frac 1p w^p\big)' + \frac{p(n-1)}{(p-1)r} |w'|^p\le 0.
$$
Hence, $\psi=:|w'|^p-\frac 1p w^p$ is decreasing.
Since $w\to 0$  as $r \to \infty$, we must have $\psi\to 0$.
Otherwise, if $\psi\to c^2>0$, then $w'\to -c$,
which contradicts the fact that $w\to 0$. 
Thus, $\psi\ge 0$, and we conclude that 
\beq\label{dec1}
w'+ a w\le 0, 
\eeq
where $a=:\frac{1}{p^{1/p}}$. 
It follows that
$e^{ar} w(r)$ is decreasing. Hence 
\beq\label{dec2}
w(r)\le C_1e^{-ar}.
\eeq
The proof of \eqref{dec2} follows the argument in \cite{GNN}.

From equation \eqref{NP}, one has
$\Delta_p w <  w^{p-1}$.
By constructing a suitable sub-solution, we have the boundary gradient estimate  in the domain  $B_{r+1}-B_r$,
$$ |w'(r)|\le C_2w(r) .$$
By  \eqref{dec1}, $ \frac{w}{|w'|}\le C$. 
Hence, we get
$$-\frac{w'}{w}=h(x)$$
for a positive and bounded function $h$. Taking integration, we obtain  
\beq\label{dec3}
w(r)\ge e^{-br} . 
\eeq
Estimates \eqref{dec2} and \eqref{dec3} show the exponential decay of the solution $w$.  
  
\vskip10pt

Let
\beq\label{Ft1}
\J(u) = \frac{  \int_{\R^n} (  |Du|^p+|u|^p)}{\big[  \int_{\R^n}  |u|^{q}\big]^{p/q}} ,\ \ \ u\in W^{1,p}(\R^n) ,
\eeq
and  let
\beq\label{S0}
S_0=\inf \,  \{ \J(u)\ |\ {u\in W^{1,p}(\R^n)}, u\ne 0\}
\eeq
be the best constant for the Sobolev embedding $W^{1, p}(\R^n) \hookrightarrow L^q(\R^n) $ with the norm 
$$\|u\|_{W^{1,p}(\R^n)}= : \Big| \int_{\R^n} (  |Du|^p+|u|^p)\Big|^{1/p} .$$
The infimum $S_0$ is attained by the ground state solution $w$ to equation \eqref{ST00}. 
It follows from \eqref{ST00} that
$$\int_{\R^n}  (  |Dw|^p+w^p) =  \int_{\R^n} w^q, $$
which implies that
\beq\label{S00}
S_0=\J(w)= \Big[ \int_{\R^n}  (  |Dw|^p+w^p)\Big]^{1-\frac pq}. 
\eeq

 \subsection{A concavity property}\label{S3.5}
Denote
\beq\label{tef}
\bar{w}(x) 
={\Small\text{$ \sum_{i=1}^k$}}  \, a_iw\big( {\Small\text{$\frac{x-p_i}{\eps}$}} \big)
             + {\Small\text{$\sum_{j=1}^l$}} \, b_j w\big({\Small\text{$\frac{x-q_j}{\eps}$}} \big).
\eeq
Assume that ${\bf p, q}$ satisfy the $\delta$-apart condition. Then, 
$$
\I(\bar{w}) = \frac{\eps^{-n} \big(\sum_{i=1}^k \int_{B_\delta(p_i)}+\sum_{j=1}^l \int_{B_\delta(q_j)\cap\Om} +\int_\omega\big)  [\eps^p |D\bar{w}|^p+|\bar{w}|^p] }
                  {\big[\eps^{-n} \big(\sum_{i=1}^k \int_{B_\delta(p_i)}+\sum_{j=1}^l \int_{B_\delta(q_j)\cap\Om} +\int_\omega\big)  |\bar{w}|^q\big]^{p/q} }  ,
$$
where $\omega=\Om-\{ \cup_{i=1}^k B_\delta(p_i) \}-\{ \cup_{j=1}^{l} B_\delta(q_j) \}$. 
For the integrals over $B_\delta(p_i)$ and $B_\delta(q_j)\cap\Om$, we apply the change of variables $y=\frac{x-p_i}{\eps}$ and $y=\frac{x-q_j}{\eps}$, respectively.
By the exponential decay of the ground state solution $w$, we have
\beq \label{Ipq}
{\begin{split}
\I(\bar{w}) &=\frac{\sum_{i=1}^k a_i^p \int_{B_R(0)} [ |Dw|^p+w^p]+\sum_{j=1}^l b_j^p \int_{B^+_R(0)}  [ |Dw|^p+w^p]+o(1)}
                  {\big[ \sum_{i=1}^k a_i^q \int_{B_R(0)} w^q+\sum_{j=1}^l b_j^q \int_{B^+_R(0)} w^q+o(1)\big]^{p/q} } \\
       & = H({\bf a, b}) S_0 +o(1)
\end{split}} \eeq 
for small  enough$\eps$, 
where $R=\delta/\eps$, $B_R^+(0)=B_R(0)\cap\{x_n>0\}$, $o(1)\to 0$ as $\eps\to 0$ and $R\to\infty$, 
$S_0$ is given in  \eqref{S00}, and 
\beq\label{Hab}
 H({\bf a, b})=:
 \frac{\sum_{i=1}^k a_i^p +\frac 12\sum_{j=1}^l b_j^p } {\big[ \sum_{i=1}^ka_i^q + \frac 12\sum_{j=1}^l b_j^q\big]^{p/q}} .
 \eeq
For brevity, write the functional $\I$ in \eqref{Ipq} as 
 \beq\label{Iuab}
   \I(\bar{w}) 
   =  \frac{a_1^p\int_{B_R(0)} \big[|Dw|^p+w^p\big] + \alpha}{\big[ a_1^q \int_{B_R(0)}w^q+\beta \big]^{p/q}},
\eeq
where $\alpha, \beta$ denote the rest parts in the formula.
We  compute, when $R$ is sufficiently large and $\eps$ is suffiiciently small, 
$${\begin{split}
\p_{a_1}  \I(\bar{w}) 
    & \approx  \frac{pa_1^{p-1} \int_{B_R(0)} [|Dw|^p+w^p] }{[a_1^q \int_{B_R(0)}w^q+\beta ]^{p/q}}
       - \frac{pa_1^{q-1} \big[a_1^p\int_{B_R(0)} [|Dw|^p+w^p] + \alpha\big] \int w^q}{[a_1^q \int_{B_R(0)}w^q+\beta]^{1+p/q}}.
\end{split}}$$
In the above computation, we use the special form of $\bar{w}$ in \eqref{tef}.
It means the derivatives of $\alpha, \beta$ in $a_1$ is very small and negligible.

Recall that 
$\int_{\R^n} [|Dw|^p+w^p] = \int_{\R^n} w^q$. 
Hence
$${\begin{split}
\p_{a_1}  \I(\bar{w}) 
   &\approx  \frac{pa_1^{p-1} \int w^q }{[ a_1^q \int_{B_R(0)}w^q+\beta]^{p/q}}
       - \frac{pa_1^{q-1} \big[a_1^p\int_{B_R(0)} [|Dw|^p+w^p] + \alpha \big] 
                                            \int w^q}{[a_1^q \int_{B_R(0)}w^q+\beta ]^{1+p/q}}\\
   &=  \frac{pa_1^{p-1} \int w^q}{[a_1^q \int_{B_R(0)}w^q+\beta ]^{p/q}}
      \Big[ 1-  \frac{a_1^{q-p}\big[a_1^p \int_{B_R(0)} [|Dw|^p+w^p] + \alpha\big]}{ a_1^q \int_{B_R(0)}w^q+\beta }\Big].
      \end{split}}$$
From the above formula, it is easy to see that when $a_1=\cdots=a_k=1$, $b_1=\cdots = b_l=1$,
$\nabla_{a_1} \I(\bar{w})  \approx 0$
if $\eps$ is sufficiently small and $R=\delta/\eps$ is sufficiently large.
Similarly, one has $\nabla_b \I(\bar{w})  \approx 0$. Therefore, we obtain
\beq\label{nab}
\nabla_{a, b} \I(\bar{w})  \approx 0 \ \ \ \text{if}\ \ a_i= b_j=1 \ \mbox{for any} \ 1\leq i\leq k, \ 1\leq j\leq  l. 
\eeq
In fact,  \eqref{nab} follows from the fact that $w$ is a ground state to \eqref{ST00}.

The following lemma states that if first derivative of $H$ vanishes at some point {\bf a},{\bf b}, 
then this point is a local maximum point of $H$.

\begin{lemma}\label{L3.1} 
Assume that $k+l>1$. 
The function $H$ has the following properties
\begin{itemize}
\item [(a)] If $\p_{a_i} H({\bf a, b})=0$, then $\p^2_{a_i} H({\bf a, b})<0$, 
\item [(b)] If $\p_{b_j} H({\bf a, b})=0$, then $\p^2_{b_j} H({\bf a, b})<0$, 
\item [(c)] If $\nabla H({\bf a, b})=0$, then $\nabla^2_\xi H({\bf a, b})<0$
\end{itemize}
for any vector $\xi\nparallel ({\bf a, b})$,
where $1\le i\le k$, $1\le j\le l$.
\end{lemma}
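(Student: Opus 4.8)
The plan is to reduce everything to the concavity of a single scalar function and then exploit the homogeneity structure of $H$. Recall
$$
H({\bf a,b})=\frac{\sum_{i=1}^k a_i^p+\frac12\sum_{j=1}^l b_j^p}{\big(\sum_{i=1}^k a_i^q+\frac12\sum_{j=1}^l b_j^q\big)^{p/q}}.
$$
Introduce the weights $\mu_i=1$ for the $a$-variables and $\mu_j=\frac12$ for the $b$-variables, collect all $m=k+l$ coordinates into a single vector $t=(t_1,\dots,t_m)$ with positive entries, and write $N(t)=\sum_s \mu_s t_s^p$, $D(t)=\sum_s \mu_s t_s^q$, so that $H=N D^{-p/q}$. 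The key elementary fact is that, since $q>p>1$, along any ray $t\mapsto \lambda t$ one has $H(\lambda t)=H(t)$ (homogeneity of degree $0$), so the real content is the behaviour transverse to rays. First I would compute $\partial_{t_s}H$ and set it to zero; the vanishing condition $\partial_{t_s}H=0$ reads
$$
p t_s^{p-1} D=\tfrac{p}{q}\,q\,t_s^{q-1} N\,D^{-1}\cdot D,\quad\text{i.e.}\quad t_s^{q-p}=\frac{N}{D},
$$
after cancelling the common positive factors. This is the crucial normalization: at any partial critical point in the $s$-th variable, $t_s$ is pinned down by the global ratio $N/D$.

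Next I would prove (a) and (b), which are the single-variable statements. Fix all variables except $t_s$ and regard $H$ as a function of $t_s$ alone; write $N=\mu_s t_s^p+\alpha$, $D=\mu_s t_s^q+\beta$ with $\alpha,\beta>0$ the contributions of the other (frozen) coordinates, exactly as in the paper's own notation $\alpha,\beta$ near \eqref{Iuab}. Then $H(t_s)=(\mu_s t_s^p+\alpha)(\mu_s t_s^q+\beta)^{-p/q}$, and a direct differentiation gives $\partial_{t_s}H=0$ iff $t_s^{q-p}=N/D$ as above; at such a point I would substitute back into the second derivative. The cleanest route is to write $g(t_s)=\log H=\log N-\frac pq\log D$, note that $\partial_{t_s}g$ and $\partial_{t_s}H$ vanish simultaneously and have the same sign structure at a critical point, and compute
$$
\partial^2_{t_s}g=\frac{N''}{N}-\Big(\frac{N'}{N}\Big)^2-\frac pq\Big(\frac{D''}{D}-\Big(\frac{D'}{D}\Big)^2\Big).
$$
At the critical point $N'/N=(p/q)D'/D$, so the two squared terms combine into a single term proportional to $(D'/D)^2$; plugging in $N'=p\mu_s t_s^{p-1}$, $N''=p(p-1)\mu_s t_s^{p-2}$, $D'=q\mu_s t_s^{q-1}$, $D''=q(q-1)\mu_s t_s^{q-2}$ and the relation $\mu_s t_s^p/N=\mu_s t_s^q/D=:\theta\in(0,1)$, everything reduces to a manifestly negative expression of the shape $\frac{1}{t_s^2}\big[(p-1)\theta p-p^2\theta^2-\frac pq((q-1)\theta q-q^2\theta^2)\cdot\frac qp\big]$ which simplifies to $-\frac{p\theta(q-p)(1-\theta)}{t_s^2}<0$ since $q>p$ and $0<\theta<1$. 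Hence $\partial^2_{t_s}g<0$, and because $\partial_{t_s}H=0$ there, $\partial^2_{t_s}H=H\,\partial^2_{t_s}g<0$. This proves (a) and (b) simultaneously (the factor $\mu_s=1$ or $\frac12$ plays no role).

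For (c) I would argue that a full critical point $\nabla H=0$ forces, by the computation above applied coordinatewise, $t_s^{q-p}=N/D$ for every $s$, hence $t_1=\cdots=t_m=:\bar t$, and then $N=\bar t^{q-p}D$ with $\theta=\mu_s\bar t^p/N$ independent of $s$. Using the $\log H$ Hessian again, $\nabla^2 H=H\,\nabla^2 g$ at this point (since $\nabla g=0$), so it suffices to show $\nabla^2_\xi g<0$ for $\xi\nparallel(\bf a,b)$. I would write $\nabla^2 g=\frac{1}{N}\nabla^2 N-\frac{1}{N^2}\nabla N\otimes\nabla N-\frac pq\big(\frac1D\nabla^2 D-\frac1{D^2}\nabla D\otimes\nabla D\big)$; at the critical point $\nabla N$ and $\nabla D$ are parallel (both proportional to the vector with entries $\mu_s\bar t^{\,\cdot}$, hence to $(\bf a,b)$ after scaling), so for $\xi$ contracted against this Hessian the rank-one corrections contribute $-c\,\langle v,\xi\rangle^2$ for a common direction $v\parallel(\bf a,b)$. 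The diagonal pieces $\frac1N\nabla^2N$ and $\frac pq\frac1D\nabla^2D$ are diagonal matrices with entries $\propto t_s^{-2}$; carrying out the same algebraic simplification as in the one-variable case, $\xi^{\mathsf T}\nabla^2 g\,\xi=-\frac{(q-p)\theta}{\bar t^2}\big(\sum_s\mu_s\xi_s^2/(\text{norm})-\langle v,\xi\rangle^2/\|v\|^2\big)$ up to positive constants, which is $\le 0$ by Cauchy–Schwarz and is strictly negative precisely when $\xi$ is not proportional to $v\parallel(\bf a,b)$. I expect the main obstacle to be organizing this last step cleanly: one must be careful that the homogeneity of degree $0$ means $H$ can never be strictly concave in all directions (the ray direction is a genuine null direction), so the statement is necessarily restricted to $\xi\nparallel(\bf a,b)$, and the bookkeeping of the rank-one terms must be done precisely enough to see that the only null direction of $\nabla^2_\xi H$ at a full critical point is exactly the radial one. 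The single-variable parts (a), (b) are routine once the identity $t_s^{q-p}=N/D$ is in hand; the vector case (c) is where the Cauchy–Schwarz/homogeneity interplay needs care.
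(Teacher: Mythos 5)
Your parts (a) and (b) are correct and match the paper's computation in substance. Both arguments fix one variable, pin the critical point down by the identity relating $t_s$ to the global ratio (which in your notation reads $t_s^{q-p}=D/N$, not $N/D$ as in your first display --- the later relation $\mu_s t_s^p/N=\mu_s t_s^q/D=\theta$ is the correct one and is what you actually use), and find that the second derivative at the critical point is a negative multiple of $(q-p)$ times a factor that is nonzero precisely because $k+l>1$: the paper normalizes $t=1$ so that $\alpha=\beta$ and obtains $f''(1)=\alpha p(p-q)/[\alpha+1]^{1+p/q}$, while you keep the parameter $\theta\in(0,1)$ and obtain $-p\theta(1-\theta)(q-p)/t_s^2$. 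These are the same computation in different clothing, yours routed through $\log H$.

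For part (c) your route genuinely differs from, and is more careful than, the paper's. The paper disposes of (c) in one line by asserting that the Hessian is diagonal at a critical point; that assertion cannot be literally true, since degree-zero homogeneity gives $\sum_r t_r\partial_r H=0$ identically, hence $\nabla^2H({\bf a,b})\cdot({\bf a,b})=0$ at any critical point, which is incompatible with a diagonal Hessian having strictly negative entries. Your computation of the full Hessian of $\log H$ at the critical point, together with the weighted Cauchy--Schwarz identification of its kernel as exactly the radial direction, supplies the argument the paper elides. One correction is needed in your write-up: at the critical point all coordinates equal $\bar t$, so $\nabla N$ and $\nabla D$ are proportional to the weight vector $(\mu_1,\dots,\mu_m)=(1,\dots,1,\tfrac12,\dots,\tfrac12)$, which is \emph{not} parallel to $({\bf a,b})=(\bar t,\dots,\bar t)$ when both $k\ge1$ and $l\ge1$. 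This does not damage the proof: substituting $\eta_s=\sqrt{\mu_s}\,\xi_s$ and $w_s=\sqrt{\mu_s}$ turns the quadratic form into $-c\big(\|\eta\|^2-\langle w,\eta\rangle^2/\|w\|^2\big)$ with $c>0$, and equality in Cauchy--Schwarz holds iff $\eta\parallel w$, i.e.\ iff $\xi_s$ is constant, i.e.\ iff $\xi\parallel({\bf a,b})$, which is exactly the excluded direction. So your conclusion stands, but the sentence identifying the rank-one direction with $({\bf a,b})$ should be corrected to refer to the weight vector, with the radial null direction recovered only through the equality case of the weighted Cauchy--Schwarz inequality.
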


\begin{proof}
For any given $1\le i\le k$, denote $t=a_i$,
$${\begin{split}
\alpha &=\big( {\Small\text{$\sum_{j=1}^k$}} a_j^p + {\Small\text{$\frac 12\sum_{j=1}^l $}}b_j^p \big)-a_i^p\ge 0, \\
\ \ \beta &= \big( {\Small\text{$\sum_{j=1}^k$}} a_j^q+ {\Small\text{$\frac12 \sum_{j=1}^l $}} b_j^q\big)-a_i^q\ge 0 .
\end{split}}$$
The notations $\alpha, \beta$ are not the same as in \eqref{Iuab}. Set
$$f(t)=: H({\bf a, b})=\frac{\alpha + t^p } {[ \beta+t^q ]^{p/q}}. $$
We calculate
$$f'(t)=\frac {pt^{p-1} } {[ \beta+t^q]^{p/q}} -\frac{p(\alpha + t^p)t^{q-1} } {[ \beta+t^q]^{1+p/q}} .
$$
If $f'(t)=0$, we have
$$ \beta+t^q=(\alpha+t^p) t^{q-p}. 
$$
By the homogeneity of the function $H$, we may assume that $f'(t)=0$ at $t=1$. Hence $\alpha=\beta$, and
\beq\label{f1}
{\begin{split}
f'(t) &=\frac {pt^{p-1} } {[ \alpha+t^q]^{p/q}} -\frac{p(\alpha + t^p)t^{q-1} } {[ \alpha+t^q]^{1+p/q}} \\
     &= \frac {\alpha p(t^{p-1} -t^{q-1})} {[ \alpha+t^q]^{1+p/q}} .
\end{split}}
\eeq
We calculate the second derivative of $f$, 
$${\begin{split}
f''(t) &=  \frac {\alpha p((p-1)t^{p-2} -(q-1)t^{q-2})} {[ \alpha+t^q]^{1+p/q}}
    -  \frac {\alpha p(p+q)(t^{p-1} -t^{q-1})t^{q-1}} {[ \alpha+t^q]^{2+p/q}}.
    \end{split}}
 $$
At $t=1$,  if $\alpha>0$ (which requires $k+l>1$), then
$$
f''(t)  =  \frac {\alpha p((p-1)t^{p-2} -(q-1)t^{q-2})} {[ \alpha+t^q]^{1+p/q}}{\Big |}_ {t=1} 
    = \frac {\alpha p(p-q)} {[ \alpha+1]^{1+p/q}} <0 . $$
Hence, we obtain part (a)  of Lemma \ref{L3.1}. Part (b) follows by a similar computation. From 
 \eqref{f1}, we observe that the Hessian matrix $\nabla^2 f$ is diagonal  at the point where $\nabla f=0$.
Therefore, part (c) also holds.
\end{proof}

We remark that Lemma \ref{L3.1} relies on the assumption $\nabla H=0$;
 the coefficient $\frac 12$ in \eqref{Hab} does not play any essential role in the proof. 
Note that $H ({\bf a, b})$ is a constant in the ray $t({\bf a, b})$ $(t>0)$.
Hence in Lemma \ref{L3.1} (c), we assume that $\xi\nparallel ({\bf a, b})$,
namely $\xi$ is not parallel to the vector $({\bf a, b})$.
We also note that \eqref{nab} holds because $w$ is the ground state solution to \eqref{ST00}.

Due to the smoothness of the function $H({\bf a, b})$, Lemma \ref{L3.1}  implies
that there exists a maximizer when taking supremum for the parameters {\bf a, b} in \eqref{S*}.

\subsection{Estimate for $S^*$}\label{S3.6}
To prove Theorem \ref{T3.1}, we need the following estimate for $S^*$.

\begin{theorem}\label{T3.2}
We have
\beq\label{es0}
\lim_{\eps\to 0 }  S^* =  S_{0, k, l}, 
\eeq
where
\beq\label{S0kl}
S_{0, k, l} =(k +{\Small\text{$\frac l2$}} )^{1-p/q}  S_0.
\eeq
\end{theorem}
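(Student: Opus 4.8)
\textbf{Proof plan for Theorem \ref{T3.2}.}

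The plan is to prove the two matching inequalities $\liminf_{\eps\to0} S^* \ge S_{0,k,l}$ and $\limsup_{\eps\to0} S^* \le S_{0,k,l}$, separately. For the lower bound, I would choose a specific admissible configuration of peaks and coefficients and estimate $S(\mathbf{p},\mathbf{q},\mathbf{a},\mathbf{b})$ from below for that choice; since $S^*$ is a supremum, any single good choice suffices. The natural choice is $\mathbf{a} = (1,\dots,1)$, $\mathbf{b} = (1,\dots,1)$, interior peaks $p_i$ pairwise $2\delta$-apart and at distance $>\delta$ from $\pom$, and boundary peaks $q_j$ pairwise $2\delta$-apart, with $\delta = N\eps$ for a large fixed $N$. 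For this configuration I need a lower bound $S(\mathbf{p},\mathbf{q},\mathbf{1},\mathbf{1}) \ge (k+\tfrac{l}{2})^{1-p/q}S_0 - o(1)$, i.e., I must show $\I(u) \ge S_{0,k,l} - o(1)$ for every peak function $u$ in the class $\Phi_{k,l}(\mathbf{p},\mathbf{q},\mathbf{1},\mathbf{1})$. Here I would use that, by \eqref{peak1}, such a $u$ is $L^\infty$-close (within $\bar\delta$) to $\bar w = w_{\mathbf{p},\mathbf{q},\mathbf{1},\mathbf{1},\eps}$, together with the uniform $W^{1,p}(\Om^\eps)$ bound \eqref{peak2a}; rescaling near each peak and using the exponential decay \eqref{dec2}--\eqref{dec3} of $w$ to decouple the contributions of distinct peaks, the numerator of $\I(u)$ is $\ge \eps^{-n}\int_\Om(\eps^p|D\bar w|^p + |\bar w|^p) - o(1)$ up to the $L^\infty$-perturbation, and similarly for the denominator; then \eqref{Ipq}--\eqref{Hab} with $\mathbf{a}=\mathbf{b}=\mathbf{1}$ give $\I(\bar w) = H(\mathbf{1},\mathbf{1})S_0 + o(1) = (k+\tfrac l2)^{1-p/q}S_0 + o(1)$. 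Care is needed because a peak function $u$ is only $L^\infty$-close to $\bar w$, not close in $W^{1,p}$; one must argue that the infimum over $\Phi_{k,l}$ of $\I$ cannot be much smaller than $\I(\bar w)$, using that any competitor $u$ still obeys the mass-centre constraints (ii) and the integral bound \eqref{peak2}, so its energy cannot drop below that of the ``symmetric'' profile by more than $o(1)$ — this is essentially a lower-semicontinuity / concentration-compactness argument localized near each peak, invoking the variational characterization \eqref{Min1} of $w$ as the minimizer of $\J$.

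For the upper bound $\limsup_{\eps\to0} S^* \le S_{0,k,l}$, I would show that for \emph{every} admissible $(\mathbf{p},\mathbf{q},\mathbf{a},\mathbf{b})$ one has $S(\mathbf{p},\mathbf{q},\mathbf{a},\mathbf{b}) = \inf_{u\in\Phi_{k,l}}\I(u) \le S_{0,k,l} + o(1)$, uniformly in the configuration. The cleanest way is to exhibit one admissible competitor $u$ in each class $\Phi_{k,l}(\mathbf{p},\mathbf{q},\mathbf{a},\mathbf{b})$ with $\I(u) \le S_{0,k,l} + o(1)$: the natural candidate is $\bar w$ itself (possibly cut off and with local-mass-centre adjustment so that conditions (i)--(iv) of Definition \ref{D2.1} hold), for which \eqref{Ipq} gives $\I(\bar w) = H(\mathbf{a},\mathbf{b})S_0 + o(1)$. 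Then I must bound $H(\mathbf{a},\mathbf{b}) \le (k+\tfrac l2)^{1-p/q}$ over $M$; but $H$ is continuous on the compact closure of $M$ and, by Lemma \ref{L3.1}, any interior critical point is a strict local maximum in directions transverse to the ray $t(\mathbf{a},\mathbf{b})$, so the maximum of $H$ on $\overline M$ is attained; evaluating, $H$ is maximized (in the homogeneity class) at $\mathbf{a}=\mathbf{b}=\mathbf{1}$, where $H(\mathbf{1},\mathbf{1}) = (k+\tfrac l2)/(k+\tfrac l2)^{p/q} = (k+\tfrac l2)^{1-p/q}$, and since $\hat\delta$ is small, $H(\mathbf{a},\mathbf{b}) = (k+\tfrac l2)^{1-p/q} + O(\hat\delta^2)$. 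Choosing $\hat\delta \to 0$ (or noting it can be taken arbitrarily small) closes the gap. One subtlety: $\bar w$ must be shown to actually lie in $\Phi_{k,l}(\mathbf{p},\mathbf{q},\mathbf{a},\mathbf{b})$ — the mass-centre condition (ii) may force replacing $p_i$ by the true mass centre $\hat p_i$, but $|\hat p_i - p_i| = o(\eps)$ by Lemma \ref{L2.1} and the discussion in \S\ref{S2.4}, and this perturbation changes $\I$ by $o(1)$.

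The main obstacle I anticipate is the lower bound, specifically establishing $\I(u) \ge S_{0,k,l} - o(1)$ for \emph{all} peak functions $u \in \Phi_{k,l}(\mathbf{p},\mathbf{q},\mathbf{1},\mathbf{1})$, not just for $\bar w$. The difficulty is that $\Phi_{k,l}$ is defined by an $L^\infty$-proximity to $\bar w$ of size $\bar\delta$, which is a weak constraint on the $W^{1,p}$ norm; a priori a competitor could shift $W^{1,p}$-mass around within the $\bar\delta$-tube and lower $\I$. The resolution must use: (1) the uniform bound \eqref{peak2a} preventing mass from escaping to infinity after rescaling; (2) the mass-centre condition pinning the location of each bump; and (3) the fact that, after rescaling at peak $p_i$ and passing to a subsequential weak-$W^{1,p}$ limit $v$, one has $|v - w| \le \bar\delta$ pointwise and $v$ has its mass centre at the origin, so that — letting $\bar\delta\to0$ along a suitable diagonal — $v$ must equal $w$ and no energy is lost, by the strict minimality of $w$ in \eqref{Min1}. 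Making the $o(1)$ uniform over all admissible configurations (since $S^*$ is a supremum over them, but the lower bound only needs one configuration, so uniformity is not actually needed here) is where I would be most careful; I would handle it by fixing the configuration first and letting $\eps, \bar\delta \to 0$ afterward. The remaining steps — the exponential-decay decoupling of peaks and the elementary optimization of $H$ — are routine given \eqref{dec2}--\eqref{dec3} and Lemma \ref{L3.1}.
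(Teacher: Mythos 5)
Your overall architecture matches the paper's: the two one-sided bounds with the correct quantifier structure (for the lower bound, fix one good configuration but control \emph{all} competitors $u$; for the upper bound, handle all configurations but exhibit one competitor per configuration), the test function $\bar w$ together with the computation \eqref{Ipq} and the optimization of $H$ via Lemma \ref{L3.1} for the upper bound, and a compactness/uniqueness-of-the-ground-state argument for the single-peak lower bound (this is exactly the paper's Lemma \ref{L3.3}, with the boundary case reduced to the interior case by flattening and even reflection). Your remark that $\hat\delta\to 0$ is needed to close the upper bound is superfluous: since $({\bf 1},{\bf 1})$ is a critical point of $H$ and, by Lemma \ref{L3.1}, a local maximum transverse to the ray, $H\le (k+\frac l2)^{1-p/q}$ on all of $M$ for $\hat\delta$ small, with no error term of uncontrolled sign.

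The genuine gap is in the lower bound for $k+l>1$, precisely at the step you wave at with ``exponential decay to decouple the contributions of distinct peaks.'' Exponential decay controls $\bar w$, not an arbitrary competitor $u$, which in the neck region between peaks is constrained only by $|u|\le\bar\delta$ in $L^\infty$ and a global $W^{1,p}$ bound. You cannot simply discard the neck: removing a region lowers the numerator \emph{and} the denominator of $\I$, and these push the quotient in opposite directions. The paper resolves this with the domain-monotonicity Lemma \ref{L3.2}: because $u$ is small on the spheres $\p B_r(p_i)$, enlarging the domain increases the quotient $\frac{\alpha+\int_{D}(|Du|^p+|u|^p)}{[\beta+\int_D|u|^q]^{p/q}}$, so $\I_\Om(u)\ge \I_{\cup_i B_r(p_i)\cup_j G_r(q_j)}(u)$ and the problem factorizes exactly over disjoint balls and half-balls. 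Your plan contains no substitute for this. Relatedly, your claim that the numerator of $\I(u)$ is bounded below by that of $\bar w$ up to $o(1)$ is false as stated ($L^\infty$ proximity says nothing about $|Du|$ versus $|D\bar w|$), and even granting per-peak bounds $A_i\ge (S_0-o(1))B_i^{p/q}$, the recombination $\frac{\sum_i A_i}{(\sum_i B_i)^{p/q}}\ge (k+\frac l2)^{1-p/q}S_0-o(1)$ does \emph{not} follow from concavity of $t\mapsto t^{p/q}$ alone — the inequality $\sum_i B_i^{p/q}\ge m^{1-p/q}(\sum_i B_i)^{p/q}$ goes the wrong way unless the masses $B_i$ are nearly equal. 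One must extract the near-equality of the local $L^q$-masses from the constraint \eqref{peak1} with $a_i=b_j=1$; this is the lower-bound counterpart of the function $H$ and is absent from your plan. Everything else (the single-peak compactness argument, the upper bound, the adjustment of mass centres by $o(\eps)$) is sound and agrees with the paper.
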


The proof of \eqref{es0} is divided into
\beq\label{es0a}
\underset{\eps\to 0} {\overline\lim} \, S^* \le  S_{0, k, l} 
\eeq
and 
\beq\label{es0b}
\underset{\eps\to 0} {\underline\lim} \,  S^* \ge  S_{0, k, l} .
\eeq

\noindent{\bf Remark}.
To prove \eqref{es0a}, 
by the definition of $S^*$ in \eqref{S*}, 
we have to consider all possible points {\bf p}, {\bf q} and coefficients {\bf a}, {\bf  b}.
However, thanks to \eqref{Spq}, it suffices to select a particular peak function $u\in \Phi_{k,l} ({\bf p}, {\bf  q},  {\bf a}, {\bf  b})$.


To prove \eqref{es0b}, on the other hand, 
we can choose special points {\bf p, q} and coefficients {\bf a}, {\bf  b},
but we need to consider the infimum in \eqref{Spq} for all $u\in \Phi_{k,l} ({\bf p}, {\bf  q},  {\bf a}, {\bf  b})$.
This properties will play a crucial role in the following argument.

We begin by introducing a useful lemma.

\begin{lemma}\label{L3.2}
Let $D_t$ be a deformation of a domain $D_0$,  such that 
$D_s\subset D_t$ for any $t>s\ge 0$.
Denote
 $$
h(t)=\frac { \alpha+\int_{D_t} [ |Du|^p +|u|^p ] }{[ \beta+ \int_{D_t} |u|^q ]^{p/q}}, \ \ u\in W^{1,p}(D_t),
 $$
 where $\alpha, \beta\ge 0$ are constants. 
 Then $ h(t) $ is monotone increasing in $t$, if  $u$ is sufficiently small on $\p D_t$ for $t>0$. 
 \end{lemma}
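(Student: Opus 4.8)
\textbf{Proof proposal for Lemma \ref{L3.2}.}

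The plan is to show directly that $h'(t)\ge 0$ for $t>0$ by differentiating the quotient and using the smallness of $u$ on $\partial D_t$. Write $A(t)=\alpha+\int_{D_t}[|Du|^p+|u|^p]$ and $B(t)=\beta+\int_{D_t}|u|^q$, so that $h=A/B^{p/q}$. Since $D_s\subset D_t$ for $s<t$, the deformation sweeps out a region, and — assuming as one does in this setting that $t\mapsto D_t$ is a smooth exhaustion with outward normal velocity $V\ge 0$ along $\partial D_t$ — the derivatives are given by the transport (Reynolds) formula
\beq
A'(t)=\int_{\partial D_t} \big(|Du|^p+|u|^p\big)\, V\, d\sigma, \qquad
B'(t)=\int_{\partial D_t} |u|^q\, V\, d\sigma .
\eeq
Then $h'(t) = B^{-p/q}A' - \tfrac pq A B^{-p/q-1} B' = B^{-p/q-1}\big(A'B-\tfrac pq A B'\big)$, so it suffices to prove $A'(t)B(t)\ge \tfrac pq A(t)B'(t)$.

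The key step is the observation that $B'(t)$ is \emph{negligible} compared to $A'(t)$: since $q>p$ and $u$ is small on $\partial D_t$ (say $|u|\le \bar\delta<1$ there, as holds for peak functions outside the balls $B_\delta(p_i), B_\delta(q_j)$), we have $|u|^q\le \bar\delta^{q-p}|u|^p \le |u|^p \le |Du|^p+|u|^p$ pointwise on $\partial D_t$, hence $0\le B'(t)\le A'(t)$. Combining this with $B(t)\ge \big(\tfrac pq\big) A(t)\cdot\!$\,---\,no; more carefully: one wants $A'B\ge \tfrac pq AB'$. Using $B'\le A'$ it is enough that $B\ge \tfrac pq A$, which need not hold in general, so the cleaner route is: from $B'\le A'$ and $A'\ge 0$,
\beq
A'B-\tfrac pq A B' \ \ge\ A'B-\tfrac pq A A' \ =\ A'\big(B-\tfrac pq A\big),
\eeq
and when this is not obviously signed one instead exploits that on $\partial D_t$ one in fact has $|u|^q=o(|u|^p)$ (because $\bar\delta\to 0$, or $u$ decays exponentially away from the peaks), so $B'(t)=o(A'(t))$; then $A'B-\tfrac pq AB' = A'B - o(A'A) = A'(B-o(A))\ge 0$ since $A,B\ge 0$ and $A'\ge 0$, for $\bar\delta$ small enough. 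This gives $h'(t)\ge 0$.

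The main obstacle, and the point needing the most care, is making precise the sense in which ``$u$ is sufficiently small on $\partial D_t$'' and converting it into the inequality $B'(t)=o(A'(t))$ uniformly in $t$; in the intended application $D_t$ is a family of balls (or half-balls) centered at a peak with radius increasing past $\delta/\eps$ in the blown-up coordinates, and $u$ is within $\bar\delta$ of the exponentially decaying ground state $w$ there, so on $\partial D_t$ both $|u|$ and $|Du|$ are controlled by $e^{-ct}$ for $t$ large while $\bar\delta$ handles the bounded range — one splits $t$ into a bounded initial interval (use smallness of $\bar\delta$) and a tail (use exponential decay of $w$ together with the $C^{1,\gamma}$ estimate \eqref{peak3a}). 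A secondary technical point is justifying the transport formulas, i.e. that $h$ is differentiable in $t$; this follows from the assumed regularity of the deformation $D_t$ and of $u\in C^\gamma(\bar\Omega)\cap W^{1,p}$, and for the degenerate integrand $|Du|^p$ one uses the improved $C^{1,\gamma}$ regularity from \cite{DB93} noted after Definition \ref{D2.1}. Once monotonicity of $h$ is established for the model deformations, the lemma is exactly the tool needed to compare energies over $\Omega$ with energies over the union of balls around the peaks in the proof of \eqref{es0b}.
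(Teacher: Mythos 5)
Your proposal is correct and follows essentially the same route as the paper: differentiate via the transport formula with nonnegative normal velocity, and use $q>p$ together with the smallness of $u$ on $\partial D_t$ to show the positive term $\int_{\partial D_t}(|Du|^p+|u|^p)g$ dominates $\frac{pA}{qB}\int_{\partial D_t}|u|^q g$ (the paper does this by the pointwise inequality $|u|^p\ge\frac{pA}{qB}|u|^q$ on $\partial D_t$, which is equivalent to your $B'\le \sup_{\partial D_t}|u|^{q-p}\,A'$ bound). The detour through "$B\ge\frac pq A$" is unnecessary, but your final argument lands correctly.
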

 
 \begin{proof}
We have
$$
 {\begin{split} 
 \frac{d}{dt} h(t)
    & = \frac{ \int_{\p D_t} [|Du|^p+|u|^p] g(x) }{b^{p/q} } 
        -\frac {a p\int_{\p D_t} |u|^q g(x) } {q b^{1+p/q}}  \\
   &\ge \frac 1{b^{p/q}} \Big[ \int_{\p D_t} |u|^p g -\frac {ap}{bq}  \int_{\p D_t} |u|^q g\Big] , 
\end{split}}
$$
where $a=\alpha+ \int_{\p D_t} [|Du|^p+|u|^p], b=\beta + \int_{D_t} |u|^q$, and
$g\ge 0$ is the velocity of deformation.

Under the assumption that $u$ is  small on $\p D_t$, we have $|u|^p -\frac {ap}{bq}|u|^q\ge 0$ on $\partial D_t$, since $q>p$.
Therefore, $\frac{d}{dt}  h(t)  \ge 0$, which implies that $ h (t)$ is monotone increasing in $t$.
\end{proof}

In the following, we prove \eqref{es0a} and \eqref{es0b} step by step.

\vskip10pt

\subsubsection{The case $k=1, l=0$.}\label{S3.6.1}
In this case, let $p\in\Om$ with $d_p>\delta$. 
Set $y=(x-p)/\eps$ and $\Om^\eps=(\Om-\{p\})/\eps$. Then
\beq\label{Ft2}
\I(u)=\frac{\int_{\Om^\eps} [ |Du|^p +|u|^p ]} {\big[ \int_{\Om^\eps} |u|^q \big]^{p/q} } ,
\eeq
where $\Om^\eps\supset B_{\delta/\eps}(0)$. 
Let $u$ be a peak function with a single peak at $p$.
By extending $u$ to $\R^n$ and applying Lemma \ref{L3.2}, we obtain
$\I(u)\le\J(u)$, where $\J(u)$ is given in \eqref{Ft1}. 
Since $w$ itself is a peak function, \eqref{es0a} holds in this case.

Next, we prove \eqref{es0b}.  Choose a point $p\in\Om$ such that $B_r(p)\subset \Om$, 
for some constant $r>0$ depending only on $n$ and $\Om$.
By the definition of $S^*$ (i.e., the supremum in \eqref{S*}) and the monotonicity in Lemma \ref{L3.2},
it suffices to prove \eqref{es0b} for $\Om=B_r(p)$. By  translation, we may assume $p=0$.
Thus, we consider  the functional
$$\I_r(u)=\frac{\eps^{-n} \int_{B_r(0)} [\eps^p |Du|^p +|u|^p]}{\big[ \eps^{-n}  \int_{B_r(0)} |u|^q\big]^{p/q} } .$$
Denote by $\Phi(0)$ the set of functions with mass centre at $p=0$.
Then \eqref{es0b} follows from the following lemma.

\begin{lemma}\label{L3.3} 
We have
\beq\label{Es1}
\lim_{\eps\to 0 }\inf \{\I_r(u)\ |\ u\in\Phi(0)\} = S_0.
\eeq

 \end{lemma}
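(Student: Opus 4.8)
\textbf{Proof proposal for Lemma \ref{L3.3}.}

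The plan is to prove the two inequalities $\limsup_{\eps\to 0}\inf_{\Phi(0)}\I_r(u)\le S_0$ and $\liminf_{\eps\to 0}\inf_{\Phi(0)}\I_r(u)\ge S_0$ separately. The upper bound is easy: the rescaled bump $\phi_0(x)=w(x/\eps)$ has mass centre at $0$ (by the radial symmetry of $w$ from \cite{SZ99}), so $\phi_0\in\Phi(0)$, and after the change of variables $y=x/\eps$ together with the exponential decay \eqref{dec2} of $w$ one gets $\I_r(\phi_0)=\J(w)+o(1)=S_0+o(1)$ as $\eps\to 0$; the tail of $w$ outside $B_{r/\eps}(0)$ is exponentially small and contributes only $o(1)$. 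Hence $\inf_{\Phi(0)}\I_r\le S_0+o(1)$.

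For the lower bound I would argue by contradiction and concentration-compactness. Suppose there is a sequence $\eps_j\to 0$ and $u_j\in\Phi(0)$ with $\I_r(u_j)\le S_0-\sigma$ for some fixed $\sigma>0$; normalize so that $\eps_j^{-n}\int_{B_r}|u_j|^q=1$. Rescale by $y=x/\eps_j$, setting $v_j(y)=u_j(\eps_j y)$, so that $\int_{B_{r/\eps_j}}|v_j|^q\,dy=1$ and $\int_{B_{r/\eps_j}}(|Dv_j|^p+|v_j|^p)\,dy\le S_0-\sigma$; extend $v_j$ by $0$ to all of $\R^n$, keeping the same bounds. Then $\{v_j\}$ is bounded in $W^{1,p}(\R^n)$, so by the concentration-compactness principle of Lions one of three alternatives holds for the measures $|v_j|^q\,dy$: vanishing, dichotomy, or compactness (up to translations). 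Vanishing is excluded because it would force $\int|v_j|^q\to 0$ against the normalization; dichotomy is excluded because the definition \eqref{S0} of $S_0$ as the best Sobolev constant is strictly subadditive in the mass split (the map $m\mapsto S_0 m^{p/q}$ is strictly concave since $p<q$), so splitting mass would make the quotient exceed what the pieces allow, contradicting the bound $\le S_0-\sigma<S_0$. Hence compactness holds: there are translations $y_j$ with $v_j(\cdot+y_j)\to v$ strongly in $L^q$, and by weak lower semicontinuity $\J(v)\le S_0-\sigma<S_0$, contradicting the definition of $S_0$ unless $v\equiv 0$ — but $v\equiv 0$ contradicts $\int|v|^q=1$. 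This yields the desired contradiction once I rule out that the translations $y_j$ escape to infinity in a way incompatible with the mass-centre constraint.

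The main obstacle is precisely this last point: controlling the translations $y_j$ using the constraint that the mass centre of $u_j$ (equivalently of $v_j$) is pinned at $0$. I expect to need the following: the mass-centre condition \eqref{cen1} (in its interior form, with the weight $\eta$ of \eqref{eta}) forces the ``bulk'' of $v_j$ to sit near the origin, so $|y_j|$ cannot diverge faster than the width of the weight window; more precisely, if $v_j(\cdot+y_j)$ concentrates near a nonzero point while the weighted first moment of $v_j$ about $0$ vanishes, one derives a contradiction from the monotonicity and concavity of $w$ exactly as in the proof of Lemma \ref{L2.1}. An alternative, possibly cleaner route is to avoid full concentration-compactness and instead note that any minimizing sequence for $\I_r$ over $\Phi(0)$, after rescaling, is a near-minimizing sequence for $\J$ on $W^{1,p}(\R^n)$; by the uniqueness of the ground state \cite{PS98, ST00} and its radial symmetry, such sequences converge (modulo translation and scaling) to a translate of $w$, and the mass-centre constraint then forces the translate to be centred, giving $\liminf\I_r(u_j)\ge\J(w)=S_0$. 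Either way, the technical heart is transferring the rigidity of the limiting profile $w$ back through the rescaling to pin down the location, and this is where I would spend the most care.
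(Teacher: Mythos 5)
Your upper bound is fine. The lower bound, however, has a genuine gap, and it sits exactly at the point you yourself flag as ``the main obstacle.'' In the paper, $\Phi(0)$ consists of \emph{peak functions} with a single peak at $0$, so every competitor satisfies \eqref{peak1a}, i.e.\ $|u(\eps y)-aw(y)|<\bar\delta$ throughout the rescaled ball, together with \eqref{peak2a}. Your lower-bound argument never invokes this $L^\infty$ proximity; you propose to pin down the translations $y_j$ using only the local mass-centre condition \eqref{cen1}. That cannot work: the weight $\eta$ in \eqref{eta} vanishes for $|t|>\eps$, so \eqref{cen1} constrains $u$ only on $B_\eps(0)$ (the unit ball after rescaling). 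A function that is radially symmetric near the origin but carries the bulk of its mass in a half-bubble centred on $\partial B_r$ still has $0$ as a local mass centre, yet its quotient is approximately $2^{p/q-1}S_0<S_0$. So the statement you are actually arguing for --- the lower bound under the mass-centre constraint alone --- is false, and no refinement of the concentration-compactness trichotomy will close it. What genuinely excludes escape of the concentration point is \eqref{peak1}: it forces $v_j$ to lie within $\bar\delta$ of $a_jw$ everywhere, hence to be uniformly positive at the origin and uniformly small (up to $\bar\delta$) away from it, and combined with \eqref{peak2a} this confines all but an $O(\bar\delta^{\,q-p})$ fraction of the $L^q$ mass to a fixed ball about $0$. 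You must feed this constraint into the compactness step.

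A secondary flaw is the claim that you may ``extend $v_j$ by $0$ to all of $\R^n$, keeping the same bounds'': the zero extension of a $W^{1,p}(B_R)$ function with nonvanishing trace is not in $W^{1,p}(\R^n)$, so you need a cutoff at a pigeonholed radius where the energy on the corresponding sphere is small, with the resulting error estimated. The paper avoids both difficulties by a more direct route: it takes (near-)minimizers $w_R$ within the peak-function class, uses the uniform bounds \eqref{peak1a}--\eqref{peak3a} to pass to a limit $w_\infty$ on $\R^n$, and identifies $w_\infty=w$ by the uniqueness of the ground state of \eqref{ST00}; the peak-function constraint already supplies the compactness and the localization that your trichotomy argument is trying to manufacture.
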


\begin{proof}
Let $y=x/\eps$. The functional $\I_r$ is changed to 
$$\J_R (u)=\frac{\int_{B_R(0)} [ |Du|^p +|u|^p]dy}{\big[   \int_{B_R (0)} |u|^q dy\big]^{p/q} } ,$$
where $R=r/\eps$.  Assume that the infimum in \eqref{Es1} is attained by $w_R$. 
Then $w_R$ is a minimizer of $\J_R$ among all functions satisfying \eqref{peak1a} and  \eqref{peak2a}.
Letting $\eps\to 0$, i.e., $R\to\infty$,  we pass to the limit $w_R\to w_\infty$. 
It follows that $w_\infty$ is a minimizer of the limiting functional $\J$ defined in \eqref{Ft1}.
By uniqueness of the ground state solution to \eqref{ST00}, we conclude that $w_\infty=w$, which means \eqref{Es1}.
\end{proof}

\subsubsection{  The case $k=0, l=1$.} \label{S3.6.2}
In this case, $S^*$ is the critical value of the functional $\I$ corresponding to the least energy solution \cite{LZ05a, LZ07}.
Moreover, estimate \eqref{es0}  follows directly  from the case $k=1, l=0$ discussed in \S \ref{S3.6.1}.

Indeed, given a point $q\in\pom$, 
by a translation and a rotation of the coordinates, 
we assume that $q=0$ and $\{x_n=0\}$ is the tangent plane of $\Om$ at $q$ 
with inner normal $e_n=(0, \cdots, 0, 1)$.
Then locally, $\pom$ is represented by $x_n=\rho(x')$, where $x'=(x_1, \cdots, x_{n-1})$.
We make the coordinate transform
$y'=x'$  and $y_n=x_n-\rho(x')$ to flatten the boundary locally. 
We then extend $u$ to $\{y_n<0\}$ by even extension.
Under this transformation, estimate \eqref{es0} also follows from the case $k=1, l=0$ in \S \ref{S3.6.1}.

\vskip5pt

\subsubsection{  The case $k>1, l=0$.} \label{S3.6.3}
In this case, $  S_{0, k, l} =k^{1-p/q}  S_0$.

First, we prove \eqref{es0b}. As observed after \eqref{es0b}, it suffices to verify \eqref{es0b}
for a particular set of points $p_1, \cdots, p_k$ and coefficients $a_1, \cdots, a_k$.
We choose points $p_1, \cdots, p_k$ such that $d_{p_i}\ge {r}$ for all $i$, $|p_i-p_j|\ge {r}>0$ for all $i\ne j$, $1\leq i,j\leq k$, 
and select $a_1=\cdots=a_k=1$.
By the monotonicity Lemma \ref{L3.2}, 
it is then sufficient to consider the functional $\I$ on the domain 
$\Om=B_{{r}} (p_1)\cup \cdots \cup B_{{r}} (p_k)$.
On each ball, the functions are completely independent. 
Therefore, \eqref{es0b} follows directly from \eqref{Es1}.

Next, we prove \eqref{es0a}.
Choose arbitrary points $p_1, \cdots , p_k\in\Om$ satisfying the $\delta$-apart condition.
Then the balls $B_{\delta} (p_1), \cdots, B_{\delta} (p_k)$ are mutually disjoint and fully contained within $\Om$.
Denote $G=\Om-\big\{ \cup_{i=2}^{k} B_{\delta} (p_i)\}$. 
For any peak function $u \in \Phi_{k,l} ({\bf p, q},  {\bf a, b})$, the monotonicity from Lemma \ref{L3.2} implies that
$${\begin{split}
\eps^{n(1-p/q)} \I(u) 
     & = \frac{\sum_{i=2}^{k} \int_{B_{\delta}(p_i)} [\eps^p|Du|^p+|u|^p] +\int_G [\eps^p|Du|^p+|u|^p] }
                    {\big[\sum_{i=2}^{k} \int_{B_{\delta}(p_i)} |u|^q +\int_G |u|^q\big]^{p/q} }\\
        & \le \frac{\sum_{i=2}^{k} \int_{B_{\delta}(p_i)} [\eps^p|Du|^p+|u|^p] +\int_\Om [\eps^p|Du_1|^p+u_1^p] }
                    {\big[\sum_{i=2}^{k} \int_{B_{\delta}(p_i)} |u|^q +\int_\Om u_1^q\big]^{p/q} }.
                \end{split}} $$    
In the inequality above, we extend $u$ from $G$ to $\Om$, obtaining a peak function $u_1$ with a single peak $p_1$.
Similarly, extending $u$ from $B_\delta(p_2)$ to $\Om$ yields a peak function $u_2$ with a single peak $p_2$.
Applying the monotonicity  Lemma \ref{L3.2}, we deduce
$${\begin{split}
\eps^{n(1-p/q)} \I(u) 
            & \le \frac{\sum_{i=3}^{k} \int_{B_{\delta}(p_i)} [\eps^p |Du|^p+|u|^p] 
                    + \sum_{i=1}^2\int_\Om [\eps^p|Du_i|^p+u_i^p] }
                    {\big[ \sum_{i=3}^{k} \int_{B_{\delta}(p_i)} |u|^q +\sum_{i=1}^2 \int_\Om u_i^q \big]^{p/q}}.
\end{split}} $$
Repeating this argument, we construct peak functions $u_1, \cdots, u_k$  
such that $u_i$ has a single peak $p_i$.
Then, by Lemma \ref{L3.2}, we obtain the inequality
$${\begin{split}
\eps^{n(1-p/q)} \I(u) 
          \le \frac{\sum_{i=1}^k\int_\Om [\eps^p |Du_i|^p+u_i^p] } {\big[ \sum_{i=1}^k\int_\Om u_i^q\big]^{p/q}}.
\end{split}} $$
As remarked after Theorem \ref{T3.2}, 
we can choose a special function $u$ for the proof of \eqref{es0a}.
Here, we choose $u_i=a_i w\big(\frac{x-p_i}{\eps}\big)$, 
where $w$ is the ground state solution to \eqref{ST00}. 
Applying the change of variables $y=\frac{x-p_i}{\eps}$ for $u_i$, the above formula becomes 
$${\begin{split}
 \I(u) 
          & \le \frac{\sum_{i=1}^k a_i^p } {\big[ \sum_{i=1}^ka_i^q\big]^{p/q}}\, 
           \frac{\int_{\R^n} [ |Dw|^p+w^p] } {\big[ \int_{\R^n} w^q\big]^{p/q}} +o(1)\\
          & \le \frac{\sum_{i=1}^k a_i^p } {\big[ \sum_{i=1}^ka_i^q\big]^{p/q}}\, S_0 +o(1) ,
\end{split}} $$
where by the exponential decay of $w$, $o(1)=O(e^{-a/\eps})$ for some constant $a>0$. 
By Lemma \ref{L3.1},  one has
$H({\bf a})\le k^{1-p/q}$ when $a_i$ are close to 1.
Hence, we obtain \eqref{es0a}.

\vskip5pt

\subsubsection{  The case $k=0, l>1$.} \label{S3.6.4}
In this case, $  S_{0, k, l} =\big(\frac l2\big)^{1-p/q}  S_0$. 
The proof is similar to that of \S \ref{S3.6.3}, and is therefore omitted.

\vskip5pt

\subsubsection{  The case $k\ge 1, l\ge 1$.} \label{S3.6.5}
Finally, we consider the general case $k\ge 1, l\ge 1$.

First, we prove \eqref{es0b} in this case.
As in \S \ref{S3.6.3}, we choose points ${\bf p}=(p_1, \cdots, p_k)\in\Om^k$ and
${\bf q}=(q_1, \cdots, q_l)\in (\pom)^l$, such that $B_{{r}}(p_i)$
and $B_{{r}}(q_j)$ are disjoint, and $d_{p_i}\ge{r}$ for all $1\le i\le k$.
Choose coefficients $a_i=b_j=1$ for all $1\le i\le k$ and $1\le j\le l$.
By the monotonicity Lemma \ref{L3.2}, 
it suffices to consider the functional $\I$ on the domain 
$$\Om=B_{{r}} (p_1)\cup \cdots \cup B_{{r}} (p_k)\cup G_{r}(q_1)\cup\cdots\cup G_{r}(q_l),$$
where $G_r(q)=B_r(q)\cap\Om$. 
The peak function $u$ in the subsets $B_{r}(p_i)$ and $G_{r}(q_j)$ are completely independent. 
Hence, \eqref{es0b} follows from the arguments in  \S \ref{S3.6.1} and \S \ref{S3.6.2}. 

Next, we prove \eqref{es0a}.
Choose arbitrary points $p_1, \cdots , p_k\in\Om$ and $q_1, \cdots, q_l\in\pom$ 
satisfying the $\delta$-apart condition.
Then the subsets $B_{\delta} (p_1), \cdots, B_{\delta} (p_k)$ and $G_{\delta}(q_1), \cdots, G_{\delta} (q_l)$
are disjoint and all contained in $\Om$.

Following the argument in \S \ref{S3.6.3}, we have the inequality
$${\begin{split}
\eps^{n(1-p/q)} \I(u) 
          \le \frac{\sum_{i=1}^k\int_\Om [\eps^p |Du_i|^p+u_i^p] +\sum_{j=1}^l \int_\Om [\eps^p |D\hat u_j|^p+\hat u_j^p]} 
          {\big[ \sum_{i=1}^k\int_\Om u_i^q + \sum_{j=1}^l \int_\Om \hat u_j^q\big]^{p/q}},
\end{split}} $$
where $u_i$,  for $i=1, \cdots, k$, are the same as above, 
and $\hat u_j$, for $j=1, \cdots, l$, are peak functions with a single peak at $q_j\in\pom$.

We then choose $u_i=a_iw\big(\frac{x-p_i}{\eps}\big)$ and $\hat u_j=b_j w\big(\frac{x-q_j}{\eps}\big)$.
Applying the change of variables $y=\frac{x-p_i}{\eps}$ for $u_i$ and  $y=\frac{x-q_j}{\eps}$ for $\hat u_j$, 
the above inequality becomes 
\beq\label{I343}
{\begin{split}
 \I(u) 
          & \le \frac{\sum_{i=1}^k a_i^p +\frac 12\sum_{j=1}^l b_i^p } {\big[ \sum_{i=1}^ka_i^q + \frac 12 \sum_{j=1}^l b_j^q\big]^{p/q}}\, 
           \frac{\int_{\R^n} [ |Dw|^p+w^p] } {\big[ \int_{\R^n} w^q\big]^{p/q}} +o(1)\\
          & \le \frac{\sum_{i=1}^k a_i^p +\frac 12\sum_{j=1}^l b_i^p } {\big[ \sum_{i=1}^ka_i^q + \frac 12\sum_{j=1}^l b_j^q\big]^{p/q}}
          \, S_0 +o(1).
\end{split}} \eeq
By Lemma \ref{L3.1}, we obtain  
$\I(u) \le  \big(k+{\Small\text{$\frac 12$}} l\big)^{1-p/q}S_0$, 
which implies that \eqref{es0a}.

\vskip10pt

\subsection{Estimate of $\I(w_{p,\eps})$}\label{S3.7}

Let $w$ be the ground state solution to \eqref{ST00} and denote $w_{p, \eps}=w(\frac{x-p}{\eps})$.

\begin{lemma}\label{L3.4}
Assume $d_p=N\eps$. There is a positive constant $c_N$, independent of $\eps$, such that
\beq\label{Es8}
\I(w_{p, \eps})\le S_0-c_N.
\eeq
\end{lemma}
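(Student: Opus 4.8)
The plan is to compute $\I(w_{p,\eps})$ directly in the rescaled coordinates $y=(x-p)/\eps$ and show that the truncation to the half-space $\{y_n>-N\}$ costs a definite, $\eps$-independent amount relative to the full-space value $S_0=\J(w)$. First I would change variables: since $d_p=N\eps$, after a rotation the rescaled domain $\Om^\eps=(\Om-\{p\})/\eps$ looks, near the origin and out to radius $\delta/\eps$, like the half-space $\{y_n>-N\}$, with an error that is negligible because the boundary of $\Om^\eps$ away from this region is at distance $\gtrsim \delta/\eps\to\infty$ and $w$ decays exponentially (estimate \eqref{dec2}). Thus, using the exponential decay of $w$ as in the computation preceding \eqref{Ipq},
$$
\I(w_{p,\eps}) = \frac{\int_{\{y_n>-N\}} (|Dw|^p+w^p)\,dy}{\big[\int_{\{y_n>-N\}} w^q\,dy\big]^{p/q}} + o(1)
$$
as $\eps\to 0$, where $o(1)=O(e^{-a/\eps})$. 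So it suffices to prove that the right-hand side, call it $g(N)$, satisfies $g(N)\le S_0 - c_N$ for some $c_N>0$ depending only on $N$ (and $n$).

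Next I would compare $g(N)$ with $\J(w)=S_0$. Write $\R^n = \{y_n>-N\}\cup\{y_n\le -N\}$ and let $A = \int_{\{y_n\le -N\}}(|Dw|^p+w^p)\,dy$, $B=\int_{\{y_n\le -N\}} w^q\,dy$; both are strictly positive and finite. Then
$$
g(N) = \frac{\int_{\R^n}(|Dw|^p+w^p) - A}{\big[\int_{\R^n} w^q - B\big]^{p/q}}.
$$
Using $\int_{\R^n}(|Dw|^p+w^p)=\int_{\R^n}w^q =: I$ (from equation \eqref{ST00}), write $g(N) = (I-A)/(I-B)^{p/q}$ and $S_0 = I/I^{p/q} = I^{1-p/q}$. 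Since $q>p$, the map $t\mapsto t^{p/q}$ is strictly concave, so $(I-B)^{p/q} \ge I^{p/q} - \frac{p}{q} I^{p/q-1} B \cdot(1+o_B(1))$; more simply, I would just show directly that $g(N) < S_0$ by checking $(I-A) < I^{1-p/q}(I-B)^{p/q}$, i.e. $(I-A)^q < I^{q-p}(I-B)^p$, which holds because the function $s\mapsto \log(I-s\,A/B \cdot \text{(appropriate)}) $... — more cleanly: the strict subadditivity/monotonicity already packaged in Lemma \ref{L3.2} gives, with $D_t$ an increasing exhaustion of $\R^n$ by sets containing $\{y_n>-N\}$ at $t=0$ and equal to $\R^n$ at $t=\infty$, that $h(0)=g(N) \le h(\infty)=S_0$, and the inequality is strict because $w$ is not supported in the half-space, so $h'(t)>0$ on a set of positive measure. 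To get the quantitative gap $c_N$, I would note that $h'(t)$ near $t=0$ is bounded below by a positive quantity depending only on $N$ (through the size of $w$ and $|Dw|$ on the hyperplane $\{y_n=-N\}$, which is an explicit positive function of $N$), giving $S_0 - g(N) = h(\infty)-h(0) = \int_0^\infty h'(t)\,dt \ge c_N > 0$.

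The main obstacle I anticipate is making the reduction to the clean half-space value $g(N)$ fully rigorous, i.e. controlling the difference between the actual rescaled domain $\Om^\eps$ (whose boundary is curved and only approximately a hyperplane) and the model half-space $\{y_n>-N\}$ uniformly in $\eps$, and confirming that $w_{p,\eps}$ restricted to $\Om^\eps$ is genuinely an admissible competitor (a peak function) so that the identity for $\I$ applies. This is where the exponential decay \eqref{dec2} does the real work: the contributions from $|y|>\delta/\eps$ and from the discrepancy between $\pom^\eps$ and $\{y_n=-N\}$ are both $O(e^{-a/\eps})$, hence absorbed into the $o(1)$ and dominated by the fixed gap $c_N$ once $\eps$ is small. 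The purely one-variable inequality $g(N)\le S_0 - c_N$ is then elementary given the strict concavity of $t\mapsto t^{p/q}$ and the positivity of the half-space "tail" integrals $A,B$.
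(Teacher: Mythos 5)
Your proposal is correct and follows essentially the same route as the paper: rescale by $y=(x-p)/\eps$ to reduce $\I(w_{p,\eps})$, up to an exponentially small error, to the half-space ratio over $\{y_n>-N\}$, and then show this ratio falls below $S_0=\J(w)$ by a definite amount controlled by the tail integral $\int_{\{y_n<-N\}}(|Dw|^p+w^p)$. The paper simply asserts the quantitative deficit in one line, while you supply the elementary inequality (via $\int_{\R^n}(|Dw|^p+w^p)=\int_{\R^n}w^q$ and the concavity of $t\mapsto t^{p/q}$, or alternatively Lemma \ref{L3.2}) that justifies it.
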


\begin{proof}
Setting $y=\frac{x-p}{\eps}$,
we obtain 
\beq\label{a1}
{\begin{split}
\I(w_{p, \eps}) 
  &= \frac{\int_{\R^n\cap \{x_n>-N\} } [|Dw|^p+w^p]}{\big[\int_{\R^n\cap \{x_n>-N\} }w^q\big]^{p/q} } +o(1)\\
  &\le  \J(w) - C \int_{\R^n\cap \{x_n<-N\} } [|Dw|^p+w^p]+o(1),
  \end{split}}
\eeq
where $C>0$ and $o(1)\to 0$ as $\eps\to 0$ (independent of $N$).
Hence,  \eqref{Es8} follows.
\end{proof}

\begin{lemma}\label{L3.5}
Let $\bar{w} =\sum_{i=1}^{k} w_{p_i, \eps}+\sum_{j=1}^{l} w_{q_j, \eps}$. Then
\beq\label{Es9}
\I(\bar{w} )\le (k+l/2)^{1-p/q} S_0-c_N
\eeq
provided that the $\delta$-apart condition is violated for $\delta=N\eps$, 
that is, if one of the following conditions holds:
\begin{itemize}
\item [(i)] There exists a point $p_i$ in ${\bf p}=(p_1, \cdots, p_k)$ such that $d_{p_i}\le N\eps$;
\item [(ii)] There exist $i\ne j$ such that $|p_i-p_j|\le N\eps$;
\item [(iii)] There exist $i\ne j$ such that $|q_i-q_j|\le N\eps$.
\end{itemize}

\end{lemma}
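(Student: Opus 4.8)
The plan is to reduce each of the three degeneracy scenarios to a comparison between the energy of $\bar w$ and the "ideal" energy $(k+l/2)^{1-p/q}S_0$, exploiting the fact that when two peaks collide (or a peak hits the boundary), the functional $\I$ strictly loses energy relative to the well-separated configuration. First I would record the baseline: by the exponential decay of $w$ (see \eqref{dec2}) and the argument leading to \eqref{Ipq}, if the $\delta$-apart condition with $\delta=N\eps$ \emph{were} satisfied, then $\I(\bar w)=(k+l/2)^{1-p/q}S_0+o(1)$ where the $o(1)$ is of order $O(e^{-aN})$; so the whole point is to exhibit a fixed (independent of $\eps$) negative correction $-c_N$ when the condition fails. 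I will treat case (i) first, since Lemma~\ref{L3.4} already gives the single-peak statement: if $d_{p_i}\le N\eps$ for some interior peak $p_i$, then after the change of variables $y=\frac{x-p_i}{\eps}$ the mass of $w(\tfrac{x-p_i}{\eps})$ that lies \emph{outside} $\Om$ is bounded below by a positive constant $c$ depending only on $N$ (since the rescaled domain $\Om^\eps$ misses a fixed half-space-like region at distance $\le N$), exactly as in \eqref{a1}; localizing via Lemma~\ref{L3.2} to disjoint balls around the remaining peaks, the numerator loses $\ge c\int_{\R^n\setminus\Om^\eps_{p_i}}[|Dw|^p+w^p]\ge c_N>0$ while the denominator is essentially unchanged, and the elementary inequality for $H$ (Lemma~\ref{L3.1}, together with the fact that losing a fixed fraction of one summand's numerator strictly decreases the Sobolev quotient) yields \eqref{Es9}.

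For cases (ii) and (iii) the mechanism is the overlap of two bumps. Suppose $|p_i-p_j|\le N\eps$. After rescaling $y=\frac{x-p_i}{\eps}$, the two profiles $w(y)$ and $w(y-\tfrac{p_j-p_i}{\eps})$ are centered at points at distance $\le N$; I would write $v=w(y)+w(y-z)$ with $|z|\le N$ and compare $\J_{\R^n}(v)$ against $2^{1-p/q}S_0$. The key sublemma is that $\J_{\R^n}(w+w(\cdot-z))<2^{1-p/q}S_0$ strictly for every fixed $z$, with a quantitative gap depending only on $|z|\le N$; this follows because $w+w(\cdot-z)$ is \emph{not} a minimizer of $\J$ on $W^{1,p}(\R^n)$ (the unique minimizers up to translation and scaling are single translates of $w$, by \cite{ST00}, \cite{SZ99}), and a bounded family of such sums cannot approach the infimum $2^{1-p/q}S_0$ without the two centers separating to infinity — so by compactness the deficit is bounded below by some $c_N>0$. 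Then I localize to $B_\delta(p_i)$ (which now contains both colliding peaks) plus disjoint balls around the others, apply Lemma~\ref{L3.2} to extend each group to all of $\R^n$, and combine with Lemma~\ref{L3.1} to get the bound with the reduced constant $(k+l/2)^{1-p/q}$; the colliding pair contributes at most $2^{1-p/q}S_0-c_N$ in place of $2^{1-p/q}S_0$, and the subadditivity structure of $H$ transmits this loss to the global quotient. Case (iii), two boundary peaks colliding, is identical after flattening the boundary near $q_i$ and making the even extension as in \S\ref{S3.6.2}; one works with $v=w(y)+w(y-z)$ restricted to a half-space and its even reflection, and the same "not a minimizer, hence bounded-away-from-optimal" argument applies, now comparing against $2\cdot\tfrac12\cdot S_0\cdot(\text{normalization})$ built into the $\tfrac12$ weights of \eqref{Hab}.

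The main obstacle I anticipate is making the "bounded family cannot be near-optimal" step quantitative and uniform in $\eps$ — i.e., extracting the fixed constant $c_N$ rather than merely an $o(1)$ improvement. The cleanest route is a compactness/contradiction argument: if no such $c_N$ existed, there would be sequences $z_m$ with $|z_m|\le N$ and (after the localization and extension) $\J_{\R^n}(w+w(\cdot-z_m))\to 2^{1-p/q}S_0$ or $\J(w_{p,\eps})\to S_0$ with $d_{p}/\eps$ bounded; passing to a subsequence $z_m\to z_\infty$ (finite!), one gets a genuine non-minimizing competitor achieving the infimum, contradicting uniqueness of the ground state. The only delicate points are (a) checking that the localization via Lemma~\ref{L3.2} does not itself destroy the strict inequality — handled because Lemma~\ref{L3.2} only \emph{increases} $\I$ toward the full-space value, so a strict deficit in the full-space comparison is preserved or amplified; and (b) verifying that when several degeneracies occur simultaneously the losses do not cancel — but since each loss is a genuine decrease in a summand of the numerator of $H$ and $H$ is monotone in the obvious sense near $\mathbf a=\mathbf b=\mathbf 1$ (Lemma~\ref{L3.1}), it suffices to exhibit one degeneracy and absorb the rest into the $o(1)$. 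I would therefore state and prove the quantitative full-space sublemma first, then assemble \eqref{Es9} from it by the now-routine localization-and-$H$-estimate pattern used in \S\ref{S3.6.3}–\S\ref{S3.6.5}.
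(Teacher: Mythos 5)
Your case (i) follows the paper's route (the mechanism of Lemma~\ref{L3.4} combined with the quantitative monotonicity of Lemma~\ref{L3.2}) and is essentially correct, although the denominator is \emph{not} ``essentially unchanged'' --- it also loses $\int_{\{y_n<-N\}}w^q$ --- the point being rather that $w$ is small on the lost region and $q>p$, so the quotient drops by a definite $c_N$, exactly as Lemma~\ref{L3.2} quantifies. The genuine gap is in your treatment of cases (ii)--(iii). Your key sublemma is that $\J\big(w+w(\cdot-z)\big)\le 2^{1-p/q}S_0-c_N$ uniformly for $|z|\le N$, and you justify the pointwise strict inequality by arguing that $w+w(\cdot-z)$ is not a minimizer of $\J$ and hence ``cannot approach the infimum $2^{1-p/q}S_0$''. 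This is backwards on two counts: $2^{1-p/q}S_0$ is not the infimum of $\J$ (the infimum is $S_0=\J(w)<2^{1-p/q}S_0$, attained only by translates and dilates of $w$), and non-minimality yields only the \emph{lower} bound $\J(v)>S_0$, which says nothing about whether $\J(v)$ can reach or exceed $2^{1-p/q}S_0$. Consequently your compactness/contradiction step has nothing to contradict: a competitor attaining the value $2^{1-p/q}S_0$ at some finite $z_\infty$ would not be a minimizer of $\J$ and would violate no uniqueness statement. Even the non-strict bound $\J(w+w(\cdot-z))\le 2^{1-p/q}S_0$ is not established by your argument.

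What actually makes the sublemma true is the interaction of the two overlapping bumps: for $|z|\le N$ the overlap has definite measure, so $\int(w+w(\cdot-z))^q$ exceeds $2\int w^q$ by a fixed amount (superadditivity of $t\mapsto t^q$), and one must show that this gain in the denominator beats the cross terms in the numerator, in particular $\int|Dw+Dw(\cdot-z)|^p-2\int|Dw|^p$, whose sign is not obvious for $p\ne 2$. That estimate is the real content of cases (ii)--(iii) and is absent from your proposal. The paper instead sidesteps the full-space interaction computation: it divides $\Om$ (e.g.\ along the perpendicular bisector of $p_ip_j$) so that each colliding bump is effectively integrated over a half-space at distance at most $N/2$ from its centre, reducing to the same quantitative loss as in case (i) via Lemma~\ref{L3.2}. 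Either route can be made to work, but as written your cases (ii) and (iii) rest on a false implication and the lemma is not proved.
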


\begin{proof}
In case (i),  assume that $d_{p_k}\leq N\eps$.
By the monotonicity Lemma \ref{L3.2},  we have
$${\begin{split}
\I(\bar{w} ) &\le \frac{\big[ (k-1)\int_{\R^n} +\int_{\R^n\cap\{x_n>-N\}} +l\int_{\R^{n,+}}\big] (|Dw|^p+w^p)+o(1)}
                      {\big[\big( (k-1)\int_{\R^n} +\int_{\R^n\cap\{x_n>-N\}} +l\int_{\R^{n,+}}\big)w^q +o(1)\big]^{p/q}} \\
       &\le \frac{\big[ k\int_{\R^n}  +l\int_{\R^{n,+}}\big] (|Dw|^p+w^p)}
                      {\big[\big( k\int_{\R^n}   +l\int_{\R^{n,+}}\big)w^q \big]^{p/q}} -c_N+o(1)\\               
       & = (k+l/2)^{1-p/q} S_0-c_N+o(1),
       \end{split}}$$
where $o(1)\to 0$ as $\eps\to 0$. Thus, we obtain \eqref{Es9}.

In case (ii) and (iii), the proof is similar, by properly dividing the domain $\Om$ and using the monotonicity Lemma \ref{L3.2}.
\end{proof}
 
 \vskip20pt

\section{A gradient flow}\label{S4}

To find critical points of the functional $\I$, we restrict $\I$ to peak functions,
which is a subset of the Sobolev space $W^{1,p}(\Om)$.
As a result, the standard variational theory cannot be applied directly. 
To overcome this difficulty, we employ a gradient flow to find critical points of $\I$.
This gradient flow is given by a parabolic $p$-Laplace equation.

Gradient flows are frequently used in the variational theory of fully nonlinear equations. 
For example, they have been applied to the Monge-Amp\`ere equation \cite{Chou} and the 
$k$-Hessian equation \cite{CW01}, 
where the corresponding equations are elliptic only 
under the condition that the functions are convex \cite{Chou} or admissible \cite{CW01}.


\subsection{The gradient flow}\label{S4.1}
Let
\beq\label{Is}
\I_s(u) =  \frac{\eps^{-n} \int_{\Om} (\eps^p(s+|Du|^2)^{p/2} +|u|^p)dx }{ \big(\eps^{-n}\int_{\Om} |u|^q\, dx\big)^{p/q}} ,
\eeq
where $s>0$ is a parameter. 
The reason to consider the functional $\I_s$ for $s>0$ is motivated by the regularity of the parabolic $p$-Laplace equation.
When $s=0$, the associated parabolic $p$-Laplace equation does not have $C^{2+\alpha, 1+\alpha}_{x, t}$ regularity,
which is essential when taking derivative in \eqref{dd} below.
For our purpose, we choose $s$ sufficiently small, such as $s\le e^{-1/\eps^4}$.

For any integers $k\ge 0, l\ge 0$ ($k+l\ge 1$), as in \S\ref{S3.1}, we introduce
\beq\label{Ss}
S_s({\bf p, q},  {\bf a, b})=\inf_{u\in \Phi_{k,l} ({\bf p, q},  {\bf a, b})} \,\I_s(u) 
\eeq
and 
\beq\label{Ss*}
S_s^*=\sup_{({\bf p,q})\in \Om^k\times \pom^l, \, ({\bf a,b})\in M} \ S_s({\bf p, q},  {\bf a,b}),
\eeq
where
${\bf p, q}$ satisfy the $\delta$-apart condition.
Similarly to Theorem \ref{T3.1}, we can prove the following result.

\begin{theorem}\label{T4.1}
For sufficiently small $\eps>0$ and  $0<s<e^{-1/\eps^4}$, 
the minimax $S_s^*$ is a critical value of the functional $\I_s$, 
and the corresponding critical point is a solution to
\beq\label{NPs}
{\begin{split}
-\eps^p \text{div}\big( (s+|Du|^2)^{\frac{p-2}{2}} Du\big)  & = \lambda u^{q-1} - u^{p-1}\ \ \text{in}\ \Om,\\
  u_\nu & =0\ \ \text{on}\ \pom, \\
   u& > 0\ \ \text{in}\ \Om
  \end{split}}
\eeq
with $k$ interior peaks and $l$ boundary peaks,
where $\lambda>0$ is a constant.
\end{theorem}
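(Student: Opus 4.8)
The plan is to carry out the construction sketched in \S\ref{S3.3}, now enjoying the advantage that the gradient flow attached to $\I_s$ is a \emph{non-degenerate} parabolic equation. Take (a time-rescaled version of) the $L^2$-gradient flow of $\I_s$, which is the initial--boundary value problem
\[
\p_t u=\eps^p\,\text{div}\big((s+|Du|^2)^{\frac{p-2}{2}}Du\big)-u^{p-1}+\lambda(t)\,u^{q-1}\ \ \text{in}\ \Om,\qquad u_\nu=0\ \ \text{on}\ \pom,
\]
started from $u_0=w_{{\bf p,q},{\bf a,b},\eps}$ as in \eqref{weps}, where $\lambda(t)>0$ is the normalization factor (explicitly $\lambda(t)=\eps^{-n}\!\int_\Om(\eps^p(s+|Du|^2)^{p/2}+|u|^p)\,\big/\,\eps^{-n}\!\int_\Om|u|^q$) that makes $\I_s$ non-increasing along the flow. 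As explained in \S\ref{S4.1}, the regularization is forced by the fact that the flow lacks $C^{2+\alpha,\,1+\alpha}_{x,t}$ regularity when $s=0$; for $s>0$ the principal part $(s+|Du|^2)^{\frac{p-2}{2}}$ is smooth and, given the uniform $C^{1,\gamma}$ bounds along the flow, uniformly elliptic, so parabolic Schauder theory yields $C^{2+\alpha,\,1+\alpha}_{x,t}$ solutions, which is exactly what is needed to differentiate $\I_s$ along the flow and to obtain the dissipation identity $\frac{d}{dt}\I_s(u(\cdot,t))=-c(t)\,\eps^{-n}\!\int_\Om|\p_t u|^2\le0$, with $c(t)>0$ and equality only at stationary points --- which are precisely the solutions of \eqref{NPs}. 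The strong maximum principle keeps $u>0$ and preserves the Neumann condition, so $u(\cdot,t)$ is admissible in \eqref{Ss}.

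Next I would run the flow simultaneously from every $u_0=w_{{\bf p,q},{\bf a,b},\eps}$ with $({\bf p,q})$ satisfying the $\delta$-apart condition ($\delta=N\eps$) and $({\bf a,b})\in\bar M$, freezing each trajectory at the first time $t$ with $\I_s(u(\cdot,t))=S_s^*-\sigma$, $\sigma=e^{-1/\eps^2}$, so $u_{{\bf p,q,a,b}}(\cdot,t)$ is defined for all $t>0$. The crucial point is that \emph{as long as} $\I_s(u(\cdot,t))>S_s^*-\sigma$ the solution remains a peak function: the bounds \eqref{peak2}--\eqref{peak3} persist because $\I_s$ is non-increasing and the flow is parabolically smoothing, while the $L^\infty$-proximity \eqref{peak1} to a configuration of $k+l$ standard bumps is preserved because, by Theorem~\ref{T3.2}, the strict monotonicity of Lemma~\ref{L3.2}, and the energy gaps $c_N$ of Lemmas~\ref{L3.4}--\ref{L3.5}, any degeneration of that configuration (a peak splitting or merging, or a peak drifting toward $\pom$ or $\p\Om_\delta$) would push $\I_s$ below $S_s^*-\sigma$ and hence could occur only after the trajectory is already frozen. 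For each $t$ define $T_t:({\bf p,q,a,b})\mapsto(\hat{\bf p},\hat{\bf q},\hat{\bf a},\hat{\bf b})$, where $\hat{\bf p},\hat{\bf q}$ are the local mass centres of $u(\cdot,t)$ (unique by Lemma~\ref{L2.1}) and $\hat{\bf a},\hat{\bf b}$ the minimizing coefficients of \S\ref{S2.3}; this is a continuous self-map of the compact parameter region $K=\{({\bf p,q}):\delta\text{-apart}\}\times\bar M$, it equals the identity at $t=0$, and --- by Lemmas~\ref{L3.4}, \ref{L3.5} on the face of $\p K$ where the $\delta$-apart condition degenerates and by the concavity Lemma~\ref{L3.1} on the face $\p\bar M$ --- it equals the identity on $\p K$ for all $t\ge0$, since there the initial energy already lies below $S_s^*-\sigma$.

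It then follows from Brouwer degree theory that $\deg T_t=1$, so $T_t$ is onto $K$ for every $t\ge0$. Picking $({\bf p}^0,{\bf q}^0,{\bf a}^0,{\bf b}^0)$ realizing the supremum in \eqref{Ss*} (up to error; Lemma~\ref{L3.1} provides the maximizer in ${\bf a,b}$ and ${\bf p,q}$ range over a compact set), surjectivity gives for each $t$ a parameter $P_t^*\in K$ with $T_t(P_t^*)=({\bf p}^0,{\bf q}^0,{\bf a}^0,{\bf b}^0)$, whence $u_{P_t^*}(\cdot,t)\in\Phi_{k,l}({\bf p}^0,{\bf q}^0,{\bf a}^0,{\bf b}^0)$ and $\I_s(u_{P_t^*}(\cdot,t))\ge S_s({\bf p}^0,{\bf q}^0,{\bf a}^0,{\bf b}^0)\ge S_s^*-o(1)$; by monotonicity $\I_s(u_{P_t^*}(\cdot,t'))\ge S_s^*-o(1)$ for all $t'\le t$. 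Since the $P_t^*$ lie in the compact set $K$, a diagonal argument using the uniform $W^{1,p}(\Om^\eps)\cap C^\gamma(\bom^\eps)$ bounds, the interior parabolic estimates, and the dissipation bound $\int_0^\infty\eps^{-n}\!\int_\Om|\p_t u|^2<\infty$ produces a trajectory $v(\cdot,t)$ defined for all $t>0$ with $\I_s(v(\cdot,t))\ge S_s^*-o(1)$ throughout; its $\omega$-limit $u_*$ is a stationary point of the flow, hence a solution of \eqref{NPs} with constant $\lambda=\lim_t\lambda(t)>0$, and $u_*$ is a peak function with $k$ interior and $l$ boundary peaks because $\I_s(u_*)\ge S_s^*-\sigma$. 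Finally $\I_s(u_*)=S_s^*$: the lower bound is $\I_s(u_*)\ge S_s^*$ from the above (with the sup in \eqref{Ss*} attained), and the reverse inequality follows because $v(\cdot,t)$ issues from initial data with $\I_s\le S_s^*+o(1)$ and the flow is non-increasing; thus $S_s^*$ is a critical value of $\I_s$ and $u_*$ is the asserted solution.

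\textbf{The main obstacle} is the quantitative ``no splitting, no escape'' statement of the second paragraph: one must prove that the $(k,l)$-peak structure survives along the flow \emph{exactly} up to the energy level $S_s^*-\sigma$. This is where the strict inequality in Lemma~\ref{L3.2}, the uniform gaps $c_N$ of Lemmas~\ref{L3.4}--\ref{L3.5}, the uniqueness of the ground state of \eqref{ST00}, and the uniform parabolic regularity have to be combined into a single estimate that rules out any degeneration of the configuration while $\I_s>S_s^*-\sigma$. A secondary difficulty is the $t\to\infty$ convergence of the unfrozen trajectory $v(\cdot,t)$ to a genuine critical point, rather than a configuration with merely vanishing slope at infinity; this is handled via the dissipation bound together with a compactness argument on the compact set of peak functions at the relevant energy level.
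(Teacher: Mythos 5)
Your plan follows the paper's route almost exactly: run the regularized gradient flow from the bump initial data, freeze at level $S_s^*-\sigma$, show the peak structure persists above that level, obtain a continuous self-map of the parameter space equal to the identity on the boundary, apply Brouwer degree to get surjectivity, and extract a critical point at level $S_s^*$. The genuine gap is precisely the step you flag as ``the main obstacle'': the persistence of the $(k,l)$-peak structure while $\I_s>S_s^*-\sigma$. The ingredients you cite (the monotonicity of Lemma \ref{L3.2} and the gaps $c_N$ of Lemmas \ref{L3.4}--\ref{L3.5}) only control the energy of the \emph{initial} configurations $\phi_\Lambda$; they do not by themselves prevent a general trajectory from degenerating while its energy stays above $S_s^*-\sigma$. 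The paper's actual mechanism is dynamical: Lemma \ref{L4.4} shows that whenever the trajectory lies in the ``transition annulus'' of any degeneration mode (conditions {\bf a})--{\bf e})), the dissipation rate satisfies $\frac{d}{dt}\I_{s,\eta}<-\delta_1$ (proved by blow-up and the uniqueness of the ground state of \eqref{ST00}); Lemma \ref{L4.3} bounds the $L^1$-displacement of the solution by $|t_1-t_0|^{1/2}$ times the square root of the energy drop; and Lemma \ref{L4.5} shows that traversing any transition annulus forces a definite $L^1$-displacement. Since the total energy drop available above the freezing level is $o(1)$, traversal is impossible before freezing (Lemma \ref{L4.6}). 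Some version of this quantitative argument is indispensable and is absent from your plan.

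Two secondary points. First, you flow the raw quotient $\I_s$ with normalization $\lambda(t)=A/B$; the paper instead flows the truncated functional $\I_{s,\eta}$ with $\eta$ as in \eqref{eta1}, precisely so that Lemma \ref{L4.1} yields two-sided bounds on $B=\int|u|^q$ (hence on the coefficient $\frac{\eta}{\eta'B}$, which reduces to $A/B$ in the relevant range) purely from the descent property; without some such device your $\lambda(t)$ is not a priori controlled. Second, your parameter region $K$ carries the full coefficient block $\bar M$, but $\I_s$ is invariant under rescaling of $u$, so the energy does not drop on the part of $\p\bar M$ corresponding to an overall rescaling of $({\bf a,b})$, and $T_t$ need not be the identity there; the paper quotients out this direction by passing to the normalized coefficients ${\bf a}',{\bf b}'$ (the manifold $G$ of \eqref{G}), which is needed for the degree argument to close.
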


Let $y=x/\eps$ and denote $\Om^\eps=\Om/\eps$. 
Then, the functional $\I_s$ becomes
$$
\I_s(u) =  \frac{ \int_{\Om^\eps} ((\bar s+|Du|^2)^{p/2} +|u|^p)dy }{ \big(\int_{\Om^\eps} |u|^q\, dy\big)^{p/q}} ,
$$
where 
$$\bar s=\eps^{2} s .$$
We note that the estimates in Section \ref{S3} remain valid for $\I_s$,
since $\bar s=\eps^{2}s \to 0$ as $\eps\to 0$.

To derive the a priori estimates \eqref{uq}, we introduce the functional 
$$
\I_{s,\eta} [u] = \frac{ \eta\big[ 
   \int_{\Om^\eps} ((\bar s+|Du|^2)^{p/2} +|u|^p)dy\big] }{ \big(\int_{\Om^\eps} |u|^q\, dy\big)^{p/q}} . 
$$
We can choose a $C^{1,1}$ smooth function $\eta$, given by
\beq\label{eta1}
\eta(t)=\left\{ {\begin{split}
 & 2\alpha e^{\frac{t}{2\alpha}-1}  \ \ \ \ t>2\alpha,\\
 & t\ \ \hskip41pt  {\Small \text{$ \frac \alpha 2$}}  <t<2\alpha,\\
 & {\Small \text{$\frac{\alpha}{2}$}} e^{\frac{2t}{\alpha}-1}\ \ \ \ \ t< {\Small \text{$\frac \alpha 2$}},
 \end{split}} \right.
\eeq
where 
$${\begin{split}
 \alpha & = (k+ {\Small\text{$\frac l2$}} )\int_{\R^n} \big( |Dw|^p+w^p\big)dx.
 \end{split}} $$
 For brevity, we denote 
$${\begin{split}
A &=\int_{\Om^\eps} ((\bar s+|Du|^2)^{p/2} +|u|^p) dy,\\
B &=\int_{\Om^\eps} |u|^q\, dy.
\end{split}}$$
Let $u(x, t)$ be a solution to \eqref{PE0} subject to the Neumann condition $u_\gamma=0$ on $\pom^\eps$. 
We compute
\beq\label{dd}
{\begin{split}
\frac{d}{dt} \I_{s,\eta} (u(\cdot, t))
 &= \frac{p\eta'}{B^{p/q} } \int_{\Om^\eps} \big ((\bar s+|Du|^2)^{\frac {p-2}{2}} Du Du_t + u^{p-1} u_t \big)  
                         -  \frac{p\eta }{B^{p/q+1}}  \int_{\Om^\eps} u^{q-1}u_t \\
 &= \frac{p\eta'}{B^{p/q} } \Big[ \int_{\Om^\eps}  \big ((\bar s+|Du|^2)^{\frac {p-2}{2}}Du Du_t + u^{p-1} u_t \big) 
                         -  \int_{\Om^\eps} \frac{\eta}{\eta' B} u^{q-1}u_t\,  \Big]\\
 &= \frac{p\eta'}{B^{p/q} }  \int_{\Om^\eps}  \big( -\Delta_{p,\bar s} [u] +u^{p-1}-\frac{\eta}{\eta' B} u^{q-1} \big) u_t dy  ,
\end{split}} \eeq
where 
$$\Delta_{p, \bar s}[u] =\text{div} ((\bar s+|Du|^2)^{\frac {p-2}{2}}Du). $$ 
The integration by parts in \eqref{dd} requires the regularity of $u(\cdot, t)$, 
which is ensured by taking $s>0$.

We also point out that in the gradient flow, we only consider positive solutions. 
Therefore, we can drop the absolute value notation.

We introduce the parabolic equation
\beq\label{PE0}
{\begin{split}
 (u_t -\Delta_{p, \bar s}u) (y, t) &=  {\Small\text{$\frac{\eta}{\eta' B} $}} u^{q-1} - u^{p-1} , \ \ \ y\in\Om^\eps, t>0.
     \end{split}}
 \eeq
It follows from \eqref{eta1} and \eqref{dd} that 
 \beq\label{dgf}
\frac{d}{dt} \I_{s,\eta} (u(\cdot, t))=
   - \frac{p\eta'}{B^{p/q} }    \int_{\Om^\eps}  \Big( -\Delta_{p, \bar s} [u] +u^{p-1}-\frac{\eta}{\eta' B} u^{q-1} \Big)^2   dy \le 0. 
\eeq
The equality holds if and only if $u$ is a solution to \eqref{NPs}. 
From \eqref{dgf}, one sees that
\eqref{PE0} is a descent flow for the functional $\I_{s, \eta} $. 

We define the minimax for the functional $\I_{s, \eta}$ analogously to \eqref{Ss} and \eqref{Ss*} for $\I_s$.
That is, we set
\beq\label{Seta}
{\begin{split} 
 & S_{s,\eta} ({\bf p, q},  {\bf a, b})  =\inf_{u\in \Phi_{k,l} ({\bf p, q},  {\bf a, b})} \,\I_{s,\eta}(u) ,\\
 & S_{s,\eta}^*=\sup_{({\bf p,q})\in \Om^k\times \pom^l, \, ({\bf a,b})\in M} \ S_{s,\eta} ({\bf p, q},  {\bf a,b}), 
 \end{split}}
\eeq
where ${\bf p, q}$ satisfy the $\delta$-apart condition.
Due to our choice of $\eta$, it follows that $S_{s,\eta}^*= S_s^*$.

Changing the coordinates $y$ in \eqref{PE0} back to $x$,  the equation \eqref{PE0} becomes
\beq\label{PE}
 (u_t - \eps^p \Delta_{p,s} u) (x, t)=   {\Small\text{$\frac{\eta}{\eta' B} $}}u^{q-1} - u^{p-1}  \ \ \ \text{in}\ Q_T,
 \eeq
 where $\Delta_{p,  s}[u] =\text{div} ((s+|Du|^2)^{\frac {p-2}{2}}Du)$ and $Q_T=\Om\times (0, T]$. 
 The initial boundary condition for \eqref{PE} is 
\beq\label{PEb}
{\begin{split}
u(\cdot, 0) &= \phi_0  \ \ \text{on}\ \Om\times \{t=0\} ,\\
u_\nu  & =0\ \ \ \ \text{on}\ \pom\times (0, T].
\end{split}}
\eeq
We will use \eqref{PE}-\eqref{PEb} to study the critical points of the functional $\I$.


 \vskip5pt

\subsection{A priori estimates for the gradient flow}\label{S4.2}
Next, we recall the a priori estimates for the gradient flow \eqref{PE}-\eqref{PEb}.
First, we have

\begin{lemma}\label{L4.1}
There exist positive constants $\bar b_1, \bar b_2$, depending only on ${\bf p}, { \bf q}, n, \eta, \Om$ and the initial condition $\phi_0$, 
but independent of $s\in [0, 1]$, such that the solution $u=u_{s, \eta}$ to \eqref{PE}-\eqref{PEb} satisfies 
\beq\label{uq}
\bar b_1\leq \int_{\Om^\eps} |u|^q\, dy \leq \bar b_2. 
\eeq
\end{lemma}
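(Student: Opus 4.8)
\textbf{Proof proposal for Lemma \ref{L4.1}.}
The plan is to obtain the lower and upper bounds on $\int_{\Om^\eps}|u|^q\,dy$ along the flow as consequences of the monotonicity \eqref{dgf} of the functional $\I_{s,\eta}$ together with the truncation built into the weight $\eta$. First I would record that, by \eqref{dgf}, $\I_{s,\eta}(u(\cdot,t))$ is non-increasing in $t$, hence $\I_{s,\eta}(u(\cdot,t))\le \I_{s,\eta}(\phi_0)=:C_0$ for all $t\ge 0$, where $C_0$ depends only on the initial datum $\phi_0$ (and on ${\bf p},{\bf q}$ through the fact that $\phi_0=w_{{\bf p,q},{\bf a,b},\eps}$, on $n$, $\eta$, $\Om$). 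Writing $A=\int_{\Om^\eps}((\bar s+|Du|^2)^{p/2}+|u|^p)\,dy\ge 0$ and $B=\int_{\Om^\eps}|u|^q\,dy$, this says $\eta(A)\le C_0\,B^{p/q}$.

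For the lower bound on $B$: I would combine the energy bound with the Sobolev inequality \eqref{S0}. Indeed, after rescaling, $\J_{R}$-type estimates give $A\ge c\,B^{p/q}$ for a constant $c>0$ coming from $S_0$ (more precisely, using that $\int_{\Om^\eps}(|Du|^p+|u|^p)\ge S_0\big(\int_{\Om^\eps}|u|^q\big)^{p/q}$ on the extended function, and $(\bar s+|Du|^2)^{p/2}\ge |Du|^p$). Since $\eta(t)\ge \frac{\alpha}{2}e^{2t/\alpha-1}$ grows without bound, the inequality $\eta(A)\le C_0 B^{p/q}$ forces $A$ to stay bounded; but $A\ge c B^{p/q}$ would then make $B$ bounded, which is not yet a lower bound. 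The actual lower bound should come the other way: if $B$ were to become very small, then since $u(\cdot,t)$ stays a peak function (as long as the flow is not frozen, which is precisely the regime under consideration, cf.\ \S\ref{S3.3}) its $L^q$ norm is bounded below by the $L^q$-mass of the ground states $w$ near each peak, up to the error $\bar\delta$ in \eqref{peak1}; so $B\ge \bar b_1$ with $\bar b_1$ depending on $k,l$ and $w$. I would make this precise using \eqref{peak1a} and the fixed positive mass $\int_{B_R}w^q$.

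For the upper bound on $B$: I would feed $A\ge cB^{p/q}$ into $\eta(A)\le C_0 B^{p/q}$. If $A$ ever exceeds $2\alpha$, then $\eta(A)=2\alpha e^{A/(2\alpha)-1}$, and since $e^{A/(2\alpha)-1}$ eventually dominates any power of $A$ while $A\ge cB^{p/q}$, the inequality $2\alpha e^{A/(2\alpha)-1}\le C_0 B^{p/q}\le (C_0/c)A$ fails for $A$ large; hence $A\le \bar A$ for a constant $\bar A$ depending on $C_0,\alpha,c$, i.e.\ on $n,\eta,\Om,\phi_0$ and not on $s\in[0,1]$. Then $A\ge cB^{p/q}$ gives $B\le (\bar A/c)^{q/p}=:\bar b_2$. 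The choice of $\eta$ in \eqref{eta1} — linear on the middle band $(\alpha/2,2\alpha)$ and with exponential arms — is exactly what makes $S^*_{s,\eta}=S^*_s$ (so $C_0$ is the right constant) while simultaneously giving the coercivity that caps $A$, hence $B$.

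The main obstacle I expect is the lower bound $\bar b_1$: it requires knowing that, throughout the (unfrozen) flow, $u(\cdot,t)$ genuinely remains close to a superposition of ground states in the sense of \eqref{peak1}, so that its $L^q$-mass cannot collapse — this uses both the descent property \eqref{dgf} keeping $\I_{s,\eta}$ near $S^*$ and the peak-function structure of Section \ref{S2}, and one must check the argument is uniform in $s\in[0,1]$ (which it is, since the Sobolev constant and $\eta$ are $s$-independent and $\bar s=\eps^2 s\to 0$). The upper bound is the more routine half, being a direct coercivity consequence of the super-polynomial growth of $\eta$.
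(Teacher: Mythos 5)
Your upper bound is essentially the paper's argument: combine the descent inequality $\eta(A)\le \I_{s,\eta}(\phi_0)\,B^{p/q}$ with the Sobolev embedding $A\ge c\,B^{p/q}$ and the super-linear growth of $\eta$ to cap $A$, hence $B$. (The paper phrases this as $\eta(A)\ge c_1A^2\ge c_2 B^{2p/q}$, so $B^{p/q}\le c_2^{-1}\I_{s,\eta}(\phi_0)$, but the mechanism is the same.)

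The lower bound is where you have a genuine gap. Your route — ``$u(\cdot,t)$ remains a peak function, hence its $L^q$ mass is bounded below by that of the ground states near each peak'' — is not available at this stage of the argument: the persistence of the peak-function structure along the flow is Corollary \ref{C4.2}, which sits downstream of Lemma \ref{L4.1} (the paper explicitly notes that Lemma \ref{L4.1} together with Lemma \ref{L4.4} feeds into the $L^\infty$ estimate and the subsequent analysis of the flow). Moreover Lemma \ref{L4.1} is a pure a priori estimate for the flow, with no hypothesis that $u(\cdot,t)$ is a peak function or that $\I_{s,\eta}$ stays above $S_s^*-\sigma$. The point you missed is elementary and is precisely why $\eta$ is given exponential ``arms'' rather than being cut off: $\eta$ is increasing and $\eta(t)=\frac{\alpha}{2}e^{2t/\alpha-1}\ge \frac{\alpha}{2e}$ for $t\le \alpha/2$, so $\eta(A)\ge \frac{\alpha}{2e}>0$ for every $A\ge 0$. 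Plugging this into the descent inequality gives immediately
$$B^{p/q}\ \ge\ \frac{\eta(A)}{\I_{s,\eta}(\phi_0)}\ \ge\ \frac{\alpha}{2e\,\I_{s,\eta}(\phi_0)}\ =:\ (\bar b_1)^{p/q},$$
with no reference to the peak structure and with constants manifestly independent of $s$. Your first attempt at the lower bound (``$\eta(A)\le C_0B^{p/q}$ forces $A$ to stay bounded'') is also logically backwards as written, since boundedness of $A$ from that inequality presupposes boundedness of $B$; you correctly repair this in the upper-bound paragraph by routing through $\eta(A)\le (C_0/c)A$, but the lower bound needs the positivity of $\inf\eta$, not the peak-function geometry.
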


\begin{proof}
For any initial condition $\phi_0$, since \eqref{PE}-\eqref{PEb} is a descent flow, there holds 
$ \I_{s, \eta} (u(\cdot, t))\le \I_{s, \eta} (\phi_0)$, i.e.,
$\frac{\eta(A)}{B^{p/q}}\le  \I_{s, \eta} (\phi_0)$, where the notations $A, B$ are the same as in  \eqref{dd}.
Hence,
$$B^{p/q} \ge \frac {\eta(A) }{\I_{s, \eta} (\phi_0)}\ge \frac{\alpha/e}{2\I_{s, \eta} (\phi_0)} =: (\bar  b_1)^{p/q}. $$

On the other hand, 
since $\eta$ we have chosen is an exponential function when $t>2\alpha$, by the Sobolev embedding, we get
$$\I_{s, \eta}(\phi_0) \ge \frac{\eta(A)}{B^{p/q}} \ge c_1 \frac{A^2}{B^{p/q}}  \ge c_2 B^{p/q},$$
which implies that $B^{p/q}\le c_2^{-1} \I_{s, \eta}(\phi_0)$. 
\end{proof}

Lemma \ref{L4.1}, together with Lemma \ref{L4.4}, implies the $L^\infty$ 
estimate for the solution to our modified gradient flow, namely, the flow described in \S\ref{S4.3} below.
 
For the local existence of solutions to \eqref{PE}-\eqref{PEb}, we refer the reader to \cite[Lemma 5.1]{TW10},
where the local existence was proved for equations with integrals on the RHS, such as $A, B$ in \eqref{PE}. 
Note that for $s>0$, equation \eqref{PE} is uniformly parabolic. 

With the local existence of solutions established,  the a priori estimates in Lemma \ref{L4.1} 
and the regularity results Theorems \ref{T4.2}-\ref{T4.3}
yield the long time existence of solutions. 
By sending $s\to 0$, one obtains the existence of weak solutions to \eqref{PE}-\eqref{PEb}
in the case $s=0$.
 
Next, we recall the following regularity result for solutions to \eqref{PE}–\eqref{PEb}, as established in \cite{DB93}.
Theorems \ref{T4.2}-\ref{T4.3} and the a priori estimates all depend on the initial condition. 
For application in this paper, we choose the initial function given in \eqref{weps},
which is a very nice function in the coordinates $y=x/\eps$.

\begin{theorem}\label{T4.2}
For any $T>0$,
the gradient flow \eqref{PE}-\eqref{PEb} admits a unique solution $u=u_{s,\eta}\in C^\gamma(\overline Q_T)$.
Moreover,  the solution satisfies the estimate
\beq\label{eM}
\|u(\eps y, t)\|_{C^\gamma ({\overline Q}^\eps_T)}\le M,
\eeq
where $Q^\eps_T=\Om^\eps\times (0, T]$, $\Om^\eps=\Om/\eps$, and  the constant $M>0$ is independent of  $\eps, s$.
\end{theorem}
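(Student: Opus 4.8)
The plan is to establish Theorem~\ref{T4.2} by combining the local existence already recalled from \cite{TW10} with the a priori bounds of Lemma~\ref{L4.1} and the classical De Giorgi--Nash--Moser type regularity theory for quasilinear parabolic equations found in \cite{DB93}. First I would observe that, since $s>0$, equation~\eqref{PE} is uniformly parabolic with smooth structure conditions, so by \cite[Lemma~5.1]{TW10} there is a unique local solution $u=u_{s,\eta}\in C^{\gamma}$ on a short time interval; uniqueness follows from the strict monotonicity of the $p$-Laplacian-type operator together with the fact that the nonlocal coefficient $\frac{\eta}{\eta' B}$ is Lipschitz in $u$ through its dependence on $B=\int_{\Om^\eps}|u|^q$. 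The next step is to upgrade this to global existence in time: Lemma~\ref{L4.1} gives $\bar b_1\le B\le\bar b_2$ for all $t$, which bounds the nonlocal coefficient from above and below; combined with the descent property~\eqref{dgf}, which controls $A=\int_{\Om^\eps}((\bar s+|Du|^2)^{p/2}+|u|^p)$ in terms of $\I_{s,\eta}(\phi_0)$, we obtain a uniform $W^{1,p}(\Om^\eps)$ bound on $u(\cdot,t)$, and then a standard Moser iteration (as in \cite{DB93}) converts this into the $L^\infty$ bound. The $L^\infty$ bound together with De Giorgi--Nash--Moser gives an interior-and-boundary $C^\gamma$ estimate with constant depending only on the data, so no blow-up can occur in finite time and the solution continues to all $T>0$.

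The heart of the statement is the $\eps$-uniform estimate~\eqref{eM}. Here I would pass to the rescaled variable $y=x/\eps$, in which \eqref{PE} becomes \eqref{PE0}, a parabolic equation on $\Om^\eps=\Om/\eps$ whose coefficients no longer carry factors of $\eps^p$; the point is that in these coordinates the equation has $\eps$-independent structure, and the initial datum \eqref{weps} is, in the $y$ variable, the fixed profile $\sum a_i w(y-p_i/\eps)+\sum b_j w(y-q_j/\eps)$, which is uniformly bounded in $C^\gamma$ and in $W^{1,p}$ independently of $\eps$ by the exponential decay of $w$ and the $\delta$-apart condition. Lemma~\ref{L4.1} is already stated with constants independent of $\eps$ in the $y$-scaling (the bounds are on $\int_{\Om^\eps}|u|^q\,dy$), and the energy bound from \eqref{dgf} applied to $\I_{s,\eta}$ is likewise $\eps$-uniform because $S^*_{s,\eta}=S^*_s$ and the estimates of Section~\ref{S3} are $\eps$-uniform. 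Therefore the Moser iteration and the De Giorgi--Nash--Moser H\"older estimate, applied in the $y$ variable on balls of fixed unit size inside $\Om^\eps$ (which is an increasingly large domain with boundary of bounded geometry after flattening near $\pom$), yield $\|u(\eps y,t)\|_{C^\gamma(\overline Q^\eps_T)}\le M$ with $M$ depending only on $n$, $p$, $q$, $\eta$, the data, and the $C^\gamma$ norm of the initial profile, but not on $\eps$ or $s\in[0,1]$.

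I expect the main obstacle to be bookkeeping the $\eps$-independence carefully near the boundary $\pom^\eps$, where one must flatten the boundary and check that the Neumann condition $u_\gamma=0$ is preserved under the flattening with constants controlled uniformly in $\eps$ (the boundary $\pom^\eps$ becomes flatter as $\eps\to0$, so this is morally easier, but the even-extension argument and the structure conditions after the change of variables need to be tracked). A secondary technical point is that the regularity theory of \cite{DB93} is typically stated for equations with $Du$-dependence only through $|Du|^{p-2}Du$; the perturbed operator $\mathrm{div}((\bar s+|Du|^2)^{(p-2)/2}Du)$ satisfies the same structure conditions with constants independent of $\bar s\in[0,\eps^2]$, so the De Giorgi--Nash--Moser estimates apply uniformly, and letting $s\to0$ recovers the $s=0$ case by the standard compactness argument; but one should state this reduction explicitly. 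Finally, uniqueness in the class $C^\gamma(\overline Q_T)$ requires that weak solutions are comparable, which follows from testing the difference of two solutions against itself and using the nonlocal coefficient's Lipschitz dependence on $B$ together with a Gronwall argument.
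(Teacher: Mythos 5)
Your proposal follows the same route the paper takes: the paper does not prove Theorem \ref{T4.2} in detail but recalls it, citing \cite[Lemma 5.1]{TW10} for local existence of the flow with nonlocal right-hand side, Lemma \ref{L4.1} together with the descent property for the global a priori bounds, the intrinsic H\"older theory of \cite{DB93} for the $C^\gamma$ estimate, and the remark that the estimate is local so that the dilation $y=x/\eps$ yields \eqref{eM} with a constant independent of $\eps$ and $s$. Your write-up fills in these same ingredients correctly; the only small point to watch is that the nonlocal coefficient $\frac{\eta}{\eta' B}$ depends on $A=\int_{\Om^\eps}((\bar s+|Du|^2)^{p/2}+|u|^p)$ as well as on $B$, so the Lipschitz/Gronwall step in your uniqueness argument must also control the difference of the gradient integrals, not only of $B$.
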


In estimate \eqref{eM}, we need to make the dilation $y=x/\eps$, due to the coefficient $\eps$ 
in equation \eqref{PE}.  The proof of Theorem \ref{T4.2} is local.
Therefore, after the change of variables $y=x/\eps$, 
we obtain local estimates for $u$ in the coordinates $y$.

Moreover, the H\"older continuity of $D_xu$ for   equation \eqref{PE} has also been established
(see \cite{DBF85, IJS17}).

\begin{theorem}\label{T4.3}
 Let $u=u_{s, \eta}$ be the solution to \eqref{PE}-\eqref{PEb}.
Then the following estimate holds
\beq\label{eM1}
\|D_yu(\eps y, t)\|_{C^\gamma ({\overline Q}^\eps_T)}\le M,
\eeq
where the constant $M>0$ is uniform for small $\eps, s>0$.
\end{theorem}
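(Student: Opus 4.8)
The plan is to pass to the rescaled variable $y=x/\eps$, in which \eqref{PE} becomes the regularized parabolic $p$-Laplace equation \eqref{PE0}, and then to invoke the interior and boundary gradient H\"older theory for such equations, taking care that every constant is uniform with respect to the two parameters $\bar s=\eps^2 s$ and $\eps$. Writing $v(y,t)=u(\eps y,t)$, the function $v$ solves $v_t-\Delta_{p,\bar s}v=g$ on $\Om^\eps\times(0,T]$ together with the Neumann condition $v_\gamma=0$ on $\pom^\eps\times(0,T]$, where $g=\frac{\eta(A)}{\eta'(A)B}\,v^{q-1}-v^{p-1}$ and $A,B$ are as in \eqref{dd}. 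First I would record that $g$ is bounded uniformly in $\eps$ and $s$: from the explicit formula \eqref{eta1} one has $\frac{\alpha}{2}\le \eta(A)/\eta'(A)\le 2\alpha$ for every value of $A$; Lemma \ref{L4.1} gives $B\ge \bar b_1>0$; and Theorem \ref{T4.2} gives $\|v\|_{C^\gamma({\overline Q}^\eps_T)}\le M$. Hence $\|g\|_{L^\infty(\Om^\eps\times(0,T])}\le C$ with $C$ depending only on $k,l,n,\eta$ and the rescaled initial datum.

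Next I would apply the gradient H\"older estimate for the regularized parabolic $p$-Laplacian with bounded right-hand side, as developed in \cite{DBF85, IJS17}. The point to stress is that this theory yields interior estimates $\|D_y v\|_{C^\gamma(K)}\le M(K)$ on parabolic subcylinders $K$ away from $\pom^\eps\times\{t=0\}$ whose constants do \emph{not} degenerate as the regularization parameter $\bar s\downarrow 0$; combined with the uniform bound on $\|g\|_\infty$ and the fact that equation \eqref{PE0} carries no further $\eps$-dependence, these estimates are uniform in $\eps$ and $s$. Since we take the initial datum \eqref{weps}, which in the $y$-coordinates is a fixed finite sum of well-separated translates of the smooth ($C^{1,\gamma}$) profile $w$ and is exponentially small away from the peaks, the rescaled initial datum has $C^{1,\gamma}$-norm bounded independently of $\eps$, so the estimate extends up to $t=0$ as well.

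For the portion of ${\overline Q}^\eps_T$ near $\pom^\eps$ I would argue as in \S\ref{S3.6.2}: after translating, rotating and flattening, $\pom$ near a boundary point is $x_n=\rho(x')$, so in the $y$-variable the boundary is $y_n=\eps^{-1}\rho(\eps y')$, whose derivatives of order $\ge 2$ are $O(\eps)$. Thus the flattening diffeomorphism is uniformly (in $\eps$) close to the identity, the equation for $v$ in the flattened chart is of the same structural class with smooth coefficients uniformly close to those of \eqref{PE0}, and the homogeneous conormal condition persists; reflecting $v$ evenly across the flattened boundary produces a function to which the interior theory of the previous step applies. Covering $\pom^\eps$ by finitely many such charts and combining with the interior estimate yields \eqref{eM1}, with $M$ uniform in $\eps,s$. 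Finally, letting $s\to 0$ (equivalently $\bar s\to 0$) transfers the estimate, and the existence, to the weak solution of \eqref{PE}--\eqref{PEb} with $s=0$.

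The hard part is the uniformity, in the degenerate limit and in $\eps$ simultaneously, of the gradient H\"older estimate: one must use a form of the DiBenedetto--Friedman estimate whose constants are stable as $\bar s\to 0$, and one must verify that flattening $\pom^\eps$ and reflecting do not introduce $\eps$-dependent constants --- which works precisely because the rescaled boundary becomes asymptotically flat. The bound on $g$ and the reduction to \eqref{PE0} are routine by comparison.
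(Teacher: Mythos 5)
Your proposal is correct and follows the same route the paper takes: the paper simply rescales by $y=x/\eps$, notes the estimates are local, and cites \cite{DBF85, IJS17} for the gradient H\"older theory, while you spell out the reduction (uniform bound on the right-hand side via the structure of $\eta$, Lemma \ref{L4.1} and Theorem \ref{T4.2}, then interior estimates uniform in $\bar s$, then boundary flattening and even reflection). Your elaboration of the uniformity in $\bar s\to 0$ and of the asymptotic flatness of $\pom^\eps$ is a faithful, more detailed account of exactly what the paper's citation is implicitly relying on.
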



Very recently, the boundedness of the time derivative $u_t$ was obtained in \cite{LLYZ}.

Our choice of $\eta$ in \eqref{eta1} is aimed at establishing the a priori estimate \eqref{uq}.
For $\frac {\alpha}{2}<t<2\alpha$, we have $\eta(t)=t$.
In our proof of Theorem \ref{T3.1}, we look for a solution $u$  satisfying
$$\eps^{-n}\int_\Om [\eps^p |Du|^p+|u|^p] dx=\int_{\Om^\eps} [ |Du|^p+|u|^p] dy
            \to \alpha\ \ \text{as}\ \eps\to 0,$$
where $\alpha  = (k+ {\Small\text{$\frac l2$}} )\int_{\R^n} ( |Dw|^p+w^p)dx$,
as given in \eqref{eta1}.
  
\vskip10pt

\subsection{Freezing the gradient flow \eqref{PE}-\eqref{PEb}}\label{S4.3}
We apply the descent gradient flow to find a critical point of the functional $\I_{s,\eta}$ with critical value $S_s^*$.
Let $u(x, t)$ be a solution to \eqref{PE}-\eqref{PEb}.
If there is a time $t^*\ge 0$ such that $\I_{s, \eta}(u(\cdot, t^*))= S_s^*-\sigma$ 
(for a given but arbitrarily small constant $\sigma>0$), 
then  the monotonicity of the flow implies 
$\I_{s,\eta} (u(\cdot, t))\le S_s^*-\sigma$ for all time $t \ge t^*$.
Thus, the solution $u(\cdot, t)$ becomes irrelevant to the critical value  $S_s^*$ for $t>t^*$.
For convenience of our argument, we freeze the solution for $t>t^*$. \ \ \

Accordingly, in the following discussion, a solution $u(\cdot, t)$ to \eqref{PE}-\eqref{PEb} will always satisfy
$\I_{s,\eta}(u(\cdot, t) )> S_{s}^*-\sigma$,
and if there is a time $t^*\ge 0$ such that $\I_{s,\eta} (u(\cdot, t^*) )=S_s^*-\sigma$, 
then we set $\I_{s,\eta} (u(\cdot, t) ) =S_s^*-\sigma$ for all $t>t^*$.
For clarity, we choose 
$$\sigma=e^{-1/\eps^2} . $$

\subsection{Solution to \eqref{PE}-\eqref{PEb} is a peak function.}\label{S4.4}
To investigate the properties of solutions to the parabolic $p$-Laplace equation \eqref{PE}-\eqref{PEb}, we first establish the following convergence lemma.

\begin{lemma}\label{L4.2} 
 If there exists a sequence $t_j\to t_0$ such that $\I_{s,\eta}(u(\cdot, t_j) )> S_{s}^*-\sigma$
 and $\frac{d}{dt} \I_{s, \eta} (u(\cdot, t_j))\to 0$, 
 then the limit $u(\cdot, t_0)=\lim_{j\to\infty} u(\cdot, t_j)$ is a solution to 
 \beq\label{van}
 \Delta_{p, \bar s}  u(\cdot, t_0) -u^{p-1} (\cdot, t_0) + 
{\Small\text{$\frac{\eta}{\eta' B}$}} u^{q-1}(\cdot, t_0) =0 \ \ \text{in}\ \Om^\eps. 
\eeq
\end{lemma}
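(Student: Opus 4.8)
The plan is to extract convergence from the dissipation identity \eqref{dgf}. Write $F[u] = -\Delta_{p,\bar s}u + u^{p-1} - \frac{\eta}{\eta' B}u^{q-1}$ for the (negative of the) gradient flow velocity appearing on the right of \eqref{PE0}. Identity \eqref{dgf} reads
$$
\frac{d}{dt}\I_{s,\eta}(u(\cdot,t)) = -\frac{p\eta'}{B^{p/q}}\int_{\Om^\eps} \big(F[u(\cdot,t)]\big)^2\,dy,
$$
and along the equation $u_t = -F[u]$. First I would note that on the set $\{\I_{s,\eta}(u(\cdot,t)) > S_s^*-\sigma\}$ the flow is not frozen, so $u$ genuinely solves \eqref{PE0} near $t_0$; moreover, by Lemma \ref{L4.1} we have $B = \int_{\Om^\eps}|u|^q\,dy \in [\bar b_1,\bar b_2]$, and by the choice of $\eta$ in \eqref{eta1} the quantity $\eta'$ is bounded above and below by positive constants along the flow (since $\eta' \in \{1, e^{\,\cdot\,}, e^{\,\cdot\,}\}$ evaluated on the bounded range of $A$, which is controlled by \eqref{uq} together with the Sobolev bound $A^2 \lesssim B^{p/q}\I_{s,\eta}(\phi_0)$ used in Lemma \ref{L4.1}). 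Hence the hypothesis $\frac{d}{dt}\I_{s,\eta}(u(\cdot,t_j))\to 0$ forces
$$
\int_{\Om^\eps}\big(F[u(\cdot,t_j)]\big)^2\,dy \longrightarrow 0,
$$
i.e. $u_t(\cdot,t_j) \to 0$ in $L^2(\Om^\eps)$.

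Next I would pass to the limit $u(\cdot,t_j)\to u(\cdot,t_0)$. Compactness is supplied by the regularity theorems already quoted: Theorem \ref{T4.2} gives a uniform $C^\gamma(\overline Q_T^\eps)$ bound, and Theorem \ref{T4.3} gives a uniform $C^\gamma$ bound on $D_y u$; hence along a subsequence $u(\cdot,t_j)\to u(\cdot,t_0)$ in $C^1_{\mathrm{loc}}$ (and the limit is independent of the subsequence by continuity of the flow in $t$, since $t_j\to t_0$). Because $s>0$, the operator $\Delta_{p,\bar s}$ has smooth, nondegenerate coefficients $(\bar s + |Du|^2)^{(p-2)/2}$, so $-\Delta_{p,\bar s}u(\cdot,t_j) + u^{p-1}(\cdot,t_j) - \frac{\eta}{\eta'B}u^{q-1}(\cdot,t_j) = F[u(\cdot,t_j)] = -u_t(\cdot,t_j)$ can be read as a uniformly elliptic equation for $u(\cdot,t_j)$ with right-hand side $-u_t(\cdot,t_j)\to 0$ in $L^2$; standard $W^{2,2}_{\mathrm{loc}}$ (or $C^{1,\gamma}$) elliptic estimates then upgrade the convergence so that $\Delta_{p,\bar s}u(\cdot,t_j)\to \Delta_{p,\bar s}u(\cdot,t_0)$ in $L^2_{\mathrm{loc}}$. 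Passing to the limit in $F[u(\cdot,t_j)] = -u_t(\cdot,t_j)$ and using that the scalar $B = B(t_j)\to B(t_0)$ (again by \eqref{uq} and dominated convergence) together with continuity of $t\mapsto \eta(A(t))/\eta'(A(t))$, one obtains
$$
-\Delta_{p,\bar s}u(\cdot,t_0) + u^{p-1}(\cdot,t_0) - \tfrac{\eta}{\eta'B}\,u^{q-1}(\cdot,t_0) = 0 \quad\text{in } \Om^\eps,
$$
which, after moving terms, is exactly \eqref{van}. One should also record that the Neumann condition $u_\nu=0$ on $\p\Om^\eps$ passes to the limit by the $C^1$ convergence, so $u(\cdot,t_0)$ solves the boundary value problem.

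\textbf{Main obstacle.} The delicate point is not the soft convergence but ensuring the weight $\eta'$ stays bounded away from $0$ and $\infty$ and that the factor $\frac{p\eta'}{B^{p/q}}$ cannot degenerate, so that $\frac{d}{dt}\I_{s,\eta}\to 0$ really does yield $F[u(\cdot,t_j)]\to 0$ in $L^2$ rather than merely a weighted version of it. This is where the specific construction of $\eta$ in \eqref{eta1} and the two-sided bound \eqref{uq} of Lemma \ref{L4.1} are used in an essential way; I would state this as a short preliminary observation before running the limit argument. A secondary technical care is that Theorems \ref{T4.2}–\ref{T4.3} are interior/local estimates after the dilation $y = x/\eps$, so the convergence $u(\cdot,t_j)\to u(\cdot,t_0)$ and the elliptic upgrade should be carried out on compact subsets (and near the boundary via the flattening already used in \S\ref{S3.6.2}), then exhausted to all of $\Om^\eps$; since $\eps$ is fixed here this causes no real difficulty.
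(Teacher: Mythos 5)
Your proposal is correct and follows essentially the same route as the paper's (much terser) proof: both read \eqref{dgf} as saying that $\frac{d}{dt}\I_{s,\eta}(u(\cdot,t_j))\to 0$ forces the residual $f(\cdot,t_j)=\Delta_{p,\bar s}u(\cdot,t_j)-u^{p-1}(\cdot,t_j)+\frac{\eta}{\eta'B}u^{q-1}(\cdot,t_j)$ to vanish in $L^2(\Om^\eps)$, and then pass to the limit. Your preliminary observation that the prefactor $p\eta'/B^{p/q}$ is bounded away from $0$ and $\infty$ (via Lemma \ref{L4.1} and the choice of $\eta$ in \eqref{eta1}) is exactly the point the paper leaves implicit, and your use of Theorems \ref{T4.2}--\ref{T4.3} to justify the limit passage is the intended compactness.
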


\begin{proof}
By \eqref{dgf},   $\frac{d}{dt}  \I_{s, \eta} (u(\cdot, t_j))\to 0$ implies that
$$\Delta_{p, \bar s}  u(\cdot, t_j) -u^{p-1} (\cdot, t_j) + 
{\Small\text{$\frac{\eta}{\eta' B}$}} u^{q-1}(\cdot, t_j) =f(\cdot, t_j) \in L^2(\Om^\eps)$$
and $\|f(\cdot, t_j)\|_{L^2(\Om^\eps)}\to 0$. 
Hence, the limit $u(\cdot, t_0)$ is a solution of \eqref{van}.
\end{proof}

The condition $\I_{s,\eta}(u(\cdot, t_j) )> S_{s}^*-\sigma$ in Lemma \ref{L4.2} ensures that the solution is not frozen. 
By the estimates in  \cite{LLYZ}, we have $f(\cdot, t_j) \in L^\infty(\Om^\eps)$.

\begin{lemma}\label{L4.3} 
Let $u(x, t)$ be a solution to \eqref{PE}-\eqref{PEb} satisfying $\I_{s,\eta}(u(\cdot, t) )> S_{s}^*-\sigma$.
Then, we have the estimate
$$
\int_{\omega} \big[ u(y, t_1) -   u(y, t_0)\big] dy \le C  |t_1-t_0|^{1/2}
\big| \I_{s, \eta} (u(\cdot, t_1))- \I_{s, \eta} (u(\cdot, t_0)) \big|^{1/2} 
$$
for any sub-domain $\omega\subset \Om^\eps$ with bounded volume $|\omega|\le C$. 
\end{lemma}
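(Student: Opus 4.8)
The plan is to estimate the $L^1$-displacement of the flow in time by interpolating between the descent identity \eqref{dgf}, which controls the $L^2$-norm of the ``defect'' $-\Delta_{p,\bar s}u + u^{p-1} - \frac{\eta}{\eta' B}u^{q-1}$, and the parabolic equation \eqref{PE0}, which identifies this defect with $u_t$ up to the positive scalar $\frac{\eta}{\eta' B}$ being absorbed. Concretely, from \eqref{PE0} we have $u_t = -\Delta_{p,\bar s}u + u^{p-1} - \frac{\eta}{\eta' B}u^{q-1}$ (after moving the reaction term), so for a fixed sub-domain $\omega\subset\Om^\eps$,
\[
\int_\omega \bigl[u(y,t_1)-u(y,t_0)\bigr]\,dy = \int_{t_0}^{t_1}\!\!\int_\omega u_t(y,t)\,dy\,dt.
\]
First I would apply Cauchy--Schwarz in the time variable and then in the space variable, using $|\omega|\le C$, to bound the right-hand side by
\[
|t_1-t_0|^{1/2}\Bigl(\int_{t_0}^{t_1}\!\!\int_\omega u_t^2\,dy\,dt\Bigr)^{1/2}
\le C\,|t_1-t_0|^{1/2}\Bigl(\int_{t_0}^{t_1}\!\!\int_{\Om^\eps} u_t^2\,dy\,dt\Bigr)^{1/2}.
\]

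Second, I would relate the space-time $L^2$-norm of $u_t$ to the change in the functional. From \eqref{dgf} and \eqref{PE0}, the rate of decrease of $\I_{s,\eta}$ is exactly a positive multiple of $\int_{\Om^\eps}(\text{defect})^2\,dy = \int_{\Om^\eps} (u_t)^2\,dy$ (using that the defect equals $u_t$). More precisely, \eqref{dgf} reads $\frac{d}{dt}\I_{s,\eta}(u(\cdot,t)) = -\frac{p\eta'}{B^{p/q}}\int_{\Om^\eps} u_t^2\,dy$. Since $u$ satisfies $\I_{s,\eta}(u(\cdot,t))>S_s^*-\sigma$ on the interval in question, the flow is not frozen, and along the flow $B=\int_{\Om^\eps}|u|^q$ stays in the range $[\bar b_1,\bar b_2]$ by Lemma \ref{L4.1}, while $\eta'$ is bounded below by a positive constant because $\eta$ is (piecewise) either the identity or an exponential with controlled argument; hence $\frac{p\eta'}{B^{p/q}}\ge c_0>0$ uniformly. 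Therefore
\[
\int_{t_0}^{t_1}\!\!\int_{\Om^\eps} u_t^2\,dy\,dt \le \frac{1}{c_0}\bigl(\I_{s,\eta}(u(\cdot,t_0))-\I_{s,\eta}(u(\cdot,t_1))\bigr)
\le \frac{1}{c_0}\bigl|\I_{s,\eta}(u(\cdot,t_1))-\I_{s,\eta}(u(\cdot,t_0))\bigr|,
\]
which when combined with the previous display yields the claimed inequality with $C$ adjusted to absorb $c_0^{-1/2}$ and $|\omega|^{1/2}$.

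Third, I would address the regularity needed to make the integration by parts and the identity $u_t=\text{defect}$ rigorous: this is exactly why $s>0$ is imposed, and I would invoke Theorems \ref{T4.2}--\ref{T4.3} together with the $L^\infty$-bound on $u_t$ from \cite{LLYZ} so that all integrals above are finite and the fundamental theorem of calculus in $t$ applies. One subtlety is the freezing convention from \S\ref{S4.3}: on the portion of $[t_0,t_1]$ where the solution is frozen, $u_t\equiv 0$ and $\I_{s,\eta}$ is constant, so that part contributes nothing to either side and the estimate is unaffected; I would remark on this briefly.

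The main obstacle I anticipate is not the interpolation itself — that is routine Cauchy--Schwarz — but verifying the \emph{uniform} lower bound $\frac{p\eta'}{B^{p/q}}\ge c_0>0$ along the flow, i.e. controlling both $\eta'$ away from zero and $B$ away from zero and infinity; the latter is Lemma \ref{L4.1}, and the former requires checking, from the explicit formula \eqref{eta1}, that the argument $A=\int_{\Om^\eps}((\bar s+|Du|^2)^{p/2}+|u|^p)\,dy$ stays in a region where $\eta'$ is bounded below, which in turn uses the energy bound $\I_{s,\eta}(u(\cdot,t))\le\I_{s,\eta}(\phi_0)$ from the descent property and the definition of $\alpha$. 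Once that uniform parabolicity-type constant is in hand, the lemma follows.
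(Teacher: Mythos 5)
Your proposal is correct and follows essentially the same route as the paper: Cauchy--Schwarz in space and time applied to $\int\!\!\int u_t$, combined with the identity $\frac{d}{dt}\I_{s,\eta}=-\frac{p\eta'}{B^{p/q}}\int_{\Om^\eps}u_t^2$ from \eqref{dgf}--\eqref{PE0} and the sign-definiteness of the descent to convert the space-time $L^2$-norm of $u_t$ into $|\I_{s,\eta}(u(\cdot,t_1))-\I_{s,\eta}(u(\cdot,t_0))|$. The only (cosmetic) difference is the order of the two Cauchy--Schwarz applications, and you are in fact more explicit than the paper about the uniform lower bound on $\frac{p\eta'}{B^{p/q}}$ via Lemma \ref{L4.1} and the choice of $\eta$.
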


\begin{proof}
From the H\"older inequality and \eqref{dgf}, 
we calculate
$${\begin{split}
\frac{d}{dt} \int_{{\omega } } u & =\int_{{\omega } } u_t\le |{\omega } |^{1/2} \big[\int_{{\omega } } u^2_t\big]^{1/2}\\
 & \le |{{\omega } }|^{1/2} \big[\int_{\Om^\eps} u^2_t\big]^{1/2}
\le C \Big|\frac{d}{dt}\I_{s, \eta} (u(\cdot, t))\Big|^{1/2}.
\end{split}}
$$
Hence, one has
$${\begin{split}
\int_{{\omega } } u(\cdot, t_1) - \int_{{\omega } } u(\cdot, t_0)
 &= \int_{t_0}^{t_1} \frac{d}{dt} \int_{{\omega } } u 
\le C\int_{t_0}^{t_1} \Big| \frac{d}{dt} \I_{s, \eta} (u(\cdot, t))\Big|^{1/2}\\
 &\le  C  |t_1-t_0|^{1/2} \Big| \int_{t_0}^{t_1} \frac{d}{dt} \I_{s, \eta} (u(\cdot, t))\Big|^{1/2}\\
 &\le  C  |t_1-t_0|^{1/2} \big| \I_{s, \eta} (u(\cdot, t_1))- \I_{s, \eta} (u(\cdot, t_0)) \big|^{1/2} . 
\end{split}}$$
\vskip-25pt
\end{proof}

Applying Lemma \ref{L4.3} to $\{x\in\omega\ |\ u(\cdot, t_1) >   u(\cdot, t_0)\}$ and 
$\{x\in\omega\ |\ u(\cdot, t_1) <   u(\cdot, t_0)\}$ respectively, we obtain the following estimate.

\begin{corollary}\label{C4.1} 
Let $u(x, t)$ be a solution to \eqref{PE}-\eqref{PEb}. Then  
\beq \label{uIs}
\int_{\omega} \big| u(y, t_1) -   u(y, t_0)\big| \, dy \le C  |t_1-t_0|^{1/2}
\big| \I_{s, \eta} (u(\cdot, t_1))- \I_{s, \eta} (u(\cdot, t_0)) \big| ^{1/2} 
\eeq
for any sub-domain $\omega\subset \Om^\eps$ with bounded volume $|\omega|\le C$. 
\end{corollary}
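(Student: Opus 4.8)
The plan is to deduce \eqref{uIs} directly from Lemma \ref{L4.3} by decomposing $\omega$ according to the sign of $u(\cdot,t_1)-u(\cdot,t_0)$. Fix the two times $t_0,t_1$ and set
\[
\omega^{+}=\{y\in\omega\ |\ u(y,t_1)\ge u(y,t_0)\},\qquad
\omega^{-}=\{y\in\omega\ |\ u(y,t_1)< u(y,t_0)\}.
\]
Both $\omega^{+}$ and $\omega^{-}$ are fixed measurable subsets of $\Om^\eps$ with $|\omega^{\pm}|\le|\omega|\le C$, so Lemma \ref{L4.3} applies with $\omega$ replaced by either of them. Applying it on $\omega^{+}$ with the ordering $(t_0,t_1)$ yields a bound for $\int_{\omega^{+}}[u(y,t_1)-u(y,t_0)]\,dy$, and applying it on $\omega^{-}$ with the roles of $t_0$ and $t_1$ interchanged yields a bound for $\int_{\omega^{-}}[u(y,t_0)-u(y,t_1)]\,dy$; in both cases the right-hand side equals $C\,|t_1-t_0|^{1/2}\,\big|\I_{s,\eta}(u(\cdot,t_1))-\I_{s,\eta}(u(\cdot,t_0))\big|^{1/2}$, since $|t_0-t_1|=|t_1-t_0|$ and the difference of $\I_{s,\eta}$ is unchanged in absolute value.

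Adding the two inequalities and observing that on $\omega^{+}$ one has $|u(\cdot,t_1)-u(\cdot,t_0)|=u(\cdot,t_1)-u(\cdot,t_0)$ while on $\omega^{-}$ one has $|u(\cdot,t_1)-u(\cdot,t_0)|=u(\cdot,t_0)-u(\cdot,t_1)$, the two left-hand sides sum to $\int_{\omega}|u(y,t_1)-u(y,t_0)|\,dy$, so \eqref{uIs} follows after relabeling $2C$ as $C$.

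There is no genuine obstacle here; the only point to keep in mind is that, although $\omega^{\pm}$ depend on the chosen times $t_0,t_1$, they are fixed sets once those times are fixed, so the proof of Lemma \ref{L4.3} — which uses only that the integration domain is a fixed subset of $\Om^\eps$ of volume at most $C$ — carries over verbatim. If the flow has been frozen at some intermediate time $t^{*}\in(t_0,t_1)$, the estimate still holds, because on the frozen interval $u(\cdot,t)$ is constant and $\frac{d}{dt}\I_{s,\eta}(u(\cdot,t))=0$, so the relevant time integral in the proof of Lemma \ref{L4.3} receives no contribution there.
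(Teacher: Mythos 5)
Your proof is correct and is exactly the argument the paper intends: the paper's own justification is the one-line remark that Corollary \ref{C4.1} follows by applying Lemma \ref{L4.3} to the two sets $\{u(\cdot,t_1)>u(\cdot,t_0)\}$ and $\{u(\cdot,t_1)<u(\cdot,t_0)\}$, which is precisely your decomposition into $\omega^{+}$ and $\omega^{-}$ with the time roles swapped on the second set. Your added observations (that the proof of Lemma \ref{L4.3} needs only a fixed measurable set of bounded volume, and that frozen intervals contribute nothing) are correct and fill in details the paper leaves implicit.
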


We then show that under appropriate conditions, 
the functional $\I_{s, \eta}$ is strictly monotone, in the sense \eqref{md1}. 
Define
\beq\label{weps1}
\phi_\Lambda ={\Small\text{$ \sum_{i=1}^k$}}  \, a_iw\big( {\Small\text{$\frac{x-p_i}{\eps}$}} \big)
             + {\Small\text{$\sum_{j=1}^l$}} \, b_j w\big({\Small\text{$\frac{x-q_j}{\eps}$}} \big) ,
\eeq
where {\bf p, q} satisfy the $\delta$-apart condition, and the coefficients
\beq\label{ab1}
a_1, \cdots, a_k, b_1, \cdots, b_l\in (1-\hat \delta, 1+\hat\delta) .
\eeq
The subscript $\Lambda $ denotes 
\beq \label{Lam}
 \Lambda=   (p_1, \cdots, p_k, q_1, \cdots, q_l, a_1, \cdots, a_{k}, b_1, \cdots, b_{l} ).
\eeq

Let $u(x, t)=u_\Lambda(x, t)$ be the solution to \eqref{PE}-\eqref{PEb} with initial condition $\phi_\Lambda$.
At the beginning, when $t>0$ is sufficiently small, the solution $u(\cdot, t)$ is a peak function,
which has interior local mass centres 
${\bf p}_{\Lambda, t}=(p_{1,t}, \cdots, p_{k, t})$, 
boundary local mass centres ${\bf q}_{\Lambda, t}=(q_{1, t}, \cdots, q_{l, t})$ by \S \ref{S2.4}, 
and has the coefficients ${\bf a}_{\Lambda, t}=(a_{1, t}, \cdots, a_{k, t})$,
${\bf b}_{\Lambda, t}=(b_{1, t}, \cdots, b_{l, t})$ by \S \ref{S2.3}. 
Denote 
\beq\label{Lamt}
{\begin{split}
\Lambda_t  =({\bf p}_{\Lambda, t}, {\bf q}_{\Lambda, t}, {\bf a}_{\Lambda, t}, {\bf b}_{\Lambda, t}),\ \
\Lambda'_t =({\bf p}_{\Lambda, t}, {\bf q}_{\Lambda, t}, {\bf a'}_{\Lambda, t}, {\bf b'}_{\Lambda, t}),\
\end{split}}
\eeq
where ${\bf a'}_{\Lambda, t}=(a'_{1, t}, \cdots, a'_{k, t})$,
 ${\bf b'}_{\Lambda, t}=(b'_{1, t}, \cdots, b'_{l, t})$, 
\beq\label{ab1a}
a'_{i, t}=:\frac{a_{i, t}}{\bar a_t} ,\ \  b'_{j, t}=:\frac {b_{j, t}}{\bar a_t}
\ \ \text{for}\ \ i=1, \cdots, k,\ j=1, \cdots, l,
\eeq
and $\bar a_t ={\Small\text{$\frac{1}{k+l}$}} (a_{1, t} +\cdots + a_{k, t} +b_{1, t}+ \cdots +b_{l, t}). $
The average value $\bar a_t$ in \eqref{ab1a} is due to the homogeneity of the functional $\I$.
The functional $\I$ is invariant under multiplication by any positive constant.
We also denote 
$$
\phi_{\Lambda_t} ={\Small\text{$ \sum_{i=1}^k$}}  \, a_{i, t}w\big( {\Small\text{$\frac{x-p_{i,t}}{\eps}$}} \big)
             + {\Small\text{$\sum_{j=1}^l$}} \, b_{j, t} w\big({\Small\text{$\frac{x-q_{j, t}}{\eps}$}} \big) . 
$$
As $t$ increases, from the solution $u(\cdot, t)$ we can define a unique peak function 
$\phi_{\Lambda_t}$ as long as  ${\bf p}_{\Lambda, t}, {\bf q}_{\Lambda, t}$ satisfy the $\delta$-apart condition,
and the coefficients ${\bf a'}_{\Lambda, t}, {\bf b'}_{\Lambda, t}$ satisfy 
\beq\label{ab1b}
a'_{i, t}, b'_{j, t} \in (1-\hat \delta, 1+\hat\delta)
\eeq
for $i=1, \cdots, k, \ j=1, \cdots, l$.

 \begin{lemma}\label{L4.4} 
Let $u(x, t)$ be a solution to \eqref{PE}-\eqref{PEb}
with initial condition $\phi_\Lambda$. Assume 
\beq\label{Ssg}
\I_{s, \eta} (u(\cdot, t))>S_s^*-\sigma . 
\eeq
Then 
\beq\label{md1}
\frac{d}{dt} \I_{s, \eta} (u(\cdot, t))<-\delta_1
\eeq
for some constant $\delta_1>0$, 
 if one of the following conditions holds (assume that $k+l>1$ in cases {\bf c)} - {\bf e)} below):
\begin{itemize}
\item [{\bf a})] $ \frac 12\bar\delta\le \|u(\cdot, t)-\phi_{\Lambda_t}\|_{L^\infty(\Om)} \le \bar\delta$;  
\item [ {\bf b})] There exists $1\le i\le k$ such that $N\eps \le d_{p_{i,t}}  \le  2N\eps$;
\item [ {\bf c})] There exist $i\ne j$, $1\le i, j\le k$, such that $N\eps \le |p_{i,t}-p_{j,t}|  \le 2N\eps$;
\item [ {\bf d})] There exist $i\ne j$, $1\le i, j\le l$, such that $N\eps \le |q_{i,t}-q_{j,t}| \le 2N\eps$;
\item [ {\bf e})] 
          There exists $1\le i\le k$ such that 
          $\frac 12\hat\delta \le |a'_{i,t}-1|  \le  \hat\delta$;
          or there exists $1\le j\le l$ such that  
          $\frac 12 \hat\delta \le | b'_{j, t}-1| \le \hat\delta$. 
\end{itemize}
\end{lemma}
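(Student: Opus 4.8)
The plan is to prove the strict decay estimate \eqref{md1} by contradiction, exploiting the fact that under each of the hypotheses {\bf a)}--{\bf e)} the function $u(\cdot,t)$ stays a definite distance away from being a critical point of $\I_{s,\eta}$, so that the gradient flow cannot stagnate there. Suppose \eqref{md1} fails: then there is a sequence of parameters $\Lambda^{(m)}$ (and the associated flows $u^{(m)}$, times $t_m$) along which one of {\bf a)}--{\bf e)} holds, $\I_{s,\eta}(u^{(m)}(\cdot,t_m))>S_s^*-\sigma$, yet $\frac{d}{dt}\I_{s,\eta}(u^{(m)}(\cdot,t_m))\to 0$. By Theorems \ref{T4.2}--\ref{T4.3} the functions $u^{(m)}(\eps y,t_m)$ are uniformly bounded in $C^\gamma$ together with their spatial gradients, so after passing to a subsequence they converge (locally uniformly, and in $W^{1,p}_{\mathrm{loc}}$) to a limit $v$; and by Lemma \ref{L4.2} applied along $t_m$ (with $t_0$ the limit time), $v$ solves the stationary equation \eqref{van} on the limit domain, i.e. $\Delta_{p,\bar s}v-v^{p-1}+\frac{\eta}{\eta'B}v^{q-1}=0$. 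Because $v$ came from a peak function with $\|v-\phi\|_{L^\infty}$ controlled by $\bar\delta$, a blow-up/uniqueness argument as in the proof of Lemma \ref{L2.2}, together with the uniqueness of the ground state $w$ to \eqref{ST00}, forces $v$ to be a sum of translated, suitably scaled copies of $w$, one at each limiting peak position (interior peaks giving full copies on $\R^n$, boundary peaks giving half-space copies extended evenly).

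Next I would derive the contradiction case by case using the sharp estimates already established in \S\ref{S3}. In case {\bf a)}, the limit $v$ would have to simultaneously satisfy \eqref{van} and $\frac12\bar\delta\le\|v-\phi_{\Lambda_0}\|_{L^\infty}\le\bar\delta$; but the classification above says $v$ \emph{is} (up to the fixed peak locations) exactly $\phi_{\Lambda_0}$ modulo $o(1)$ terms that vanish as $\eps\to0$ by exponential decay of $w$, contradicting the lower bound $\frac12\bar\delta$ once $\bar\delta$ is chosen small and $\eps$ small. In cases {\bf b)}, {\bf c)}, {\bf d)}, one reads off from Lemmas \ref{L3.4} and \ref{L3.5} that whenever a peak is within $2N\eps$ of the boundary, or two peaks are within $2N\eps$ of each other, one has $\I_{s,\eta}(\phi_{\Lambda_t})\le (k+l/2)^{1-p/q}S_0-c_N$; combined with Theorem \ref{T3.2} (which gives $S_s^*\to S_{0,k,l}=(k+l/2)^{1-p/q}S_0$) and Corollary \ref{C4.1} (which shows $u(\cdot,t)$ is $L^1$-close to $\phi_{\Lambda_t}$ when $\frac{d}{dt}\I_{s,\eta}$ is small), one gets $\I_{s,\eta}(u(\cdot,t_m))\le S_s^*-c_N/2<S_s^*-\sigma$, contradicting \eqref{Ssg}. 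In case {\bf e)}, one instead invokes the concavity property Lemma \ref{L3.1}: since $\nabla_{a,b}H$ vanishes only at ${\bf a}={\bf b}={\bf 1}$ and $H$ is strictly concave there, having some $a'_{i,t}$ or $b'_{j,t}$ a distance $\ge\frac12\hat\delta$ from $1$ forces $H({\bf a'}_{\Lambda_t},{\bf b'}_{\Lambda_t})\le (k+l/2)^{1-p/q}-c(\hat\delta)$, hence via \eqref{Ipq} and Theorem \ref{T3.2} the same strict drop $\I_{s,\eta}(u(\cdot,t_m))<S_s^*-\sigma$, again a contradiction.

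To make the logic clean I would organize it so that {\bf b)}--{\bf e)} reduce to a single statement: if any of the degeneracy conditions holds, then $\I_{s,\eta}(\phi_{\Lambda_t})\le S_{s,\eta}^*-2\sigma$ for small $\eps$; combined with $|\I_{s,\eta}(u(\cdot,t))-\I_{s,\eta}(\phi_{\Lambda_t})|$ being small (by the $L^\infty$-closeness in the definition of peak function, the regularity estimates \eqref{eM}--\eqref{eM1}, and the exponential decay of $w$, which together control the numerators and denominators of $\I_{s,\eta}$), this contradicts \eqref{Ssg}. Case {\bf a)} is handled separately as above. The contradiction argument then only needs the quantitative gap to beat $\sigma=e^{-1/\eps^2}$, which is immediate since $c_N$ and $c(\hat\delta)$ are fixed positive constants independent of $\eps$ while $\sigma\to0$.

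The main obstacle I expect is the compactness/classification step: passing from the sequence of gradient flows to a limit solving \eqref{van} requires the uniform $C^{1,\gamma}$ estimates of Theorems \ref{T4.2}--\ref{T4.3} to survive the rescaling $y=x/\eps$ and to be strong enough to take limits in the quasilinear operator $\Delta_{p,\bar s}$; and then identifying the limit with a sum of ground states — especially near boundary peaks, where one flattens $\pom$, makes an even extension, and must check that the Neumann condition passes to the limit and that the extended $v$ still has finite energy so that the decay-at-infinity argument of Lemma \ref{L2.2} applies — is delicate precisely because, for $p\ne 2$, one does not have $C^2$ regularity. Everything downstream (the energy comparisons via Lemmas \ref{L3.4}, \ref{L3.5}, \ref{L3.1} and Theorem \ref{T3.2}) is then bookkeeping with the already-proven sharp constants.
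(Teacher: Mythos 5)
Your overall skeleton --- argue by contradiction, use Lemma \ref{L4.2} and the uniform estimates \eqref{peak2a}, \eqref{eM1} to extract a limit solving \eqref{van}, then invoke uniqueness of the ground state --- is exactly the paper's mechanism, and your treatment of case \textbf{a}) matches the paper's. But for cases \textbf{b})--\textbf{e}) you abandon that mechanism and substitute an energy-\emph{value} argument, and there the proof has a genuine gap. Lemmas \ref{L3.4}, \ref{L3.5} and the concavity Lemma \ref{L3.1} estimate $\I$ (or $H$) evaluated at the special function $\phi_{\Lambda_t}=\bar w$, not at the flow solution $u(\cdot,t)$. To contradict \eqref{Ssg} you need an upper bound on $\I_{s,\eta}(u(\cdot,t))$, and nothing you cite supplies the bridge $\I_{s,\eta}(u(\cdot,t))\le \I_{s,\eta}(\phi_{\Lambda_t})+o(1)$: the $L^\infty$-closeness $\|u-\phi_{\Lambda_t}\|_{L^\infty}\le\bar\delta$ together with the $C^{1,\gamma}$ bounds does not control the difference of the gradient terms $\int_{\Om^\eps}|Du|^p-\int_{\Om^\eps}|D\phi_{\Lambda_t}|^p$ (a perturbation of size $\bar\delta$ in $L^\infty$ with bounded gradient can still change this integral by an amount of order one, far exceeding the gap $c_N$); and Corollary \ref{C4.1} is misapplied --- estimate \eqref{uIs} compares $u$ at two \emph{times} along the flow, it says nothing about the distance from $u(\cdot,t)$ to $\phi_{\Lambda_t}$. (A smaller point: \eqref{Es9} is stated for violation of the $N\eps$-apart condition, i.e.\ $d_{p_i}\le N\eps$, whereas conditions \textbf{b})--\textbf{d}) concern the annulus $[N\eps,2N\eps]$; one would at best get a $c_{2N}$-drop for $\phi_{\Lambda_t}$, and even that does not transfer to $u$.)

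The way the paper closes cases \textbf{b})--\textbf{e}) is to stay with the derivative, not the value: if \eqref{md1} fails, the rescaled $u$ itself converges to a solution of the limit equation, which by uniqueness of the ground state must be $w$ after normalization; but condition \textbf{b}) would give a half-space Neumann solution with a peak at distance $R\in[N,2N]$ from the boundary, hence after even reflection an entire solution with two peaks; \textbf{c}), \textbf{d}) likewise produce two peaks; and \textbf{e}) forces all the blown-up coefficients to converge to $1$ simultaneously, contradicting $|a'_{i,t}-1|\ge\tfrac12\hat\delta$. In each case the contradiction is with the \emph{classification} of the limit, not with an energy inequality. Note that your own first paragraph already contains this tension: you assert the limit $v$ is a sum of ground-state copies (interior ones on $\R^n$, boundary ones evenly reflected), yet in case \textbf{b}) the relevant peak is neither interior (at scale $\gg\eps$ from $\pom$) nor on $\pom$, and that is precisely where the classification fails and yields the contradiction directly --- no energy comparison is needed or available. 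You should replace the value-based arguments in \textbf{b})--\textbf{e}) by this reflection/uniqueness argument applied to the blow-up limit.
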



\begin{proof} 
\vskip-4pt
For the initial condition $\phi_\Lambda$,  none of Cases {\bf a}) - {\bf e}) in Lemma \ref{L4.4} occurs at $t=0$.
Thus, they do not occur for small $t>0$.
Suppose that one of {\bf a}) - {\bf e})  first occurs at some time $t>0$.
We then prove that \eqref{md1} holds at this time $t$.

Suppose to the contrary that \eqref{md1} is not true.
We make the dilation $y=\frac{x-p}{\eps}$ for a proper $p$, and send $\eps\to 0$.
Then, by \eqref{peak2a} and  \eqref{eM1}, the solution $u(y, t)$  converges to a limit $u_0$, 
which is a solution to 
\beq\label{lamb}
-\Delta_p u=\lambda u^{q-1}-u^{p-1}
\eeq
for a positive constant $\lambda$.
Note that as $\eps\to 0$, $\bar s=\eps^{2} s\to 0$ for the constant $\bar s$ in \eqref{PE0}.
Without loss of generality, we may assume  $\lambda=1$ by a constant multiplication.
Hence, $u_0=w$ after a translation.

But if one of the conditions {\bf a}) - {\bf e}) holds, we will show below that $u_0\ne w$,
leading to a contradiction.
Therefore, \eqref{md1} must hold.
 
In the following, we show that $u_0\ne w$ case by case.

\begin{itemize}

\item [{\underline{Case {\bf a})}}.]  
Assume that Case {\bf a}) occurs first among Cases {\bf a}) to {\bf e}) at time $t$.
Then we have 
$\|u(\cdot, t)-\phi_{\Lambda_t}\|_{L^\infty(\Om)} = \theta \bar\delta$ for some $\theta\in [\frac 12, 1]$. 
Let $p_{\eps, t} \in\Om$ be a point such that $|u(p_{\eps, t}, t)-\phi_{\Lambda_t}(p_{\eps, t})| =\theta \bar\delta$.
As before,  let $y=\frac{x-p_{\eps, t}}{\eps}$ and $u_0(y)=\lim_{\eps\to 0} u (y, t)$.
Let $p_{i, t}$ denote a peak of  $\phi_{\Lambda_t}$ that is closest  to $p_{\eps, t}$ among all peaks of $\phi_{\Lambda_t}$.

If there is a constant $N'>0$ such that $|p_{i,t}-p_{\eps, t}|\le N'\eps$,
then $u_0$ is a function with a peak at $\bar p=:\lim_{\eps\to 0}  \frac{p_{i,t}-p_{\eps, t}}{\eps}$ with $|\bar p|\le N'$.
By condition {\bf a}), it follows that  $u_0$ and $w$ are separated by a positive distance, yielding a contradiction.

If $|p_{i,t}-p_{\eps,t}|/\eps\to\infty$,
then $u(0) = \theta \bar\delta$ (in the coordinates $y$ ) and $u(0)=\max_{\R^n} u$, which also leads to  a contradiction.

\vskip5pt

\item [{\underline{Case  {\bf b})}}.] 
Assume that Case {\bf b}) occurs first among Cases {\bf a}) to {\bf e}) at time $t$.
Let $p\in\pom$ be a point such that $d_{p_{i, t}}=|p_{i, t}-p|$.
Set $y=\frac{x-p}{\eps}$ and define $u_0(y)=\lim_{\eps\to 0} u (y, t)$.
Then the limit $u_0$, defined in $\R^n\cap\{x_n>0\}$, 
 satisfies the Neumann condition on $\{x_n=0\}$ 
and has a peak at the point $Re_n$ for some $R\in[N, 2N]$, 
where $e_n$ is the unit vector on the $x_n$-axis.
By making the even extension in $\{x_n=0\}$, we obtain an entire solution
to \eqref{ST00} with at least two peaks, 
which is different from the ground state solution $w$,
thus contradicting the uniqueness of  solutions to \eqref{ST00}.
 
\vskip5pt

\item [{\underline{Case {\bf c})}}.] 
Assume that Case {\bf c}) occurs first among Cases {\bf a}) to {\bf e}) at time $t$.
Let  $y=\frac{x-p_{i, t}}{\eps}$ and $u_0(y)=\lim_{\eps\to 0} u (y, t)$.
Then $u_0$ is a function with two peaks, and  is defined either in the entire space 
or  in a half space, say $\{x_n>a\}$ for some constant $a$.

In the former case,  $u_0$ is a ground state solution to \eqref{ST00},
and so $u_0=w$, which contradicts the assumption that $u_0$ has two peaks.

In the latter case, $u_0$ satisfies the Neumann condition on $\{x_n=a\}$.
By performing the even extension, we obtain a ground state solution to \eqref{ST00} with more than one peak, 
again contradicting the uniqueness of the ground state solution. 

\vskip5pt

\item [{\underline{Case {\bf d})}}.] 
Choose the coordinates such that both $q_{i, t}$ and $q_{j, t}$
are located on the plane $\{x_n=0\}$.
Let  $y=\frac{x-q_{i, t}}{\eps}$ and $u_0(y)=\lim_{\eps\to 0} u (y, t)$.
Then $u_0$ is defined in the half space and
 satisfies the Neumann condition on $\{x_n=0\}$.
We reach a contradiction by making an even extension as above.

\vskip5pt

\item [{\underline{Case {\bf e})}}.] 
Let us consider the case when $\frac 12\hat\delta \le |a'_{i,t}-1|  \le  \hat\delta$, 
the other case can be thandled in the same  way.
By the definition of ${\overline a}_t$,
there is a coefficient among {\bf a} and {\bf b} (say $a_{j, t}$ or  $b_{j, t}$) such that 
 $|a_{i, t}-a_{j,t}|>\frac 12\hat\delta$ or $|a_{i, t}-b_{j,t}|>\frac 12\hat\delta$.

Let  $y=\frac{x-p_{i, t}}{\eps}$ and $u_0(y)=\lim_{\eps\to 0} u (y, t)$.
From the above argument, we must have $u_0=w$ (since we assume $\lambda=1$ in \eqref{lamb} by a constant multiplication), 
which implies that $a_{i, t}\to 1$ as $\eps\to 0$.
Similarly, we have  $a_{j, t}\to 1$ as $\eps\to 0$,
contradicting the condition $|a_{i, t}-a_{j,t}|>\frac 12\hat\delta$.
\end{itemize}

To complete the proof,
it suffices to notice that the above proof is valid 
if two or more cases among the conditions {\bf a}) - {\bf e}) occurs simultaneously. 
\end{proof}


Let $u(x, t)$ be a solution to \eqref{PE}-\eqref{PEb} with initial condition $\phi_\Lambda$,
where {\bf p, q} satisfy the $\delta$-apart condition and {\bf a,b} satisfy \eqref{ab1}. 
We say that the solution $u$ traverses condition {\bf a})  
if there exist times $\tau_1$ and $\tau_2$, $0\le \tau_1 < \tau_2\le \infty$, 
such that 
$${\begin{split}
 & {\Small\text{$\frac 12$}} \bar\delta \le \|u(\cdot, t)-\phi_{\Lambda_t}\|_{L^\infty(\Om)} \le \bar\delta\ \ \ \mbox{for any} \ t\in (\tau_1, \tau_2),\\
  & \|u(\cdot, \tau_1)-\phi_{\Lambda_{\tau_1}}\|_{L^\infty(\Om)} ={\Small\text{$\frac 12$}}   \bar\delta, \\
 & \|u(\cdot, \tau_2)-\phi_{\Lambda_{\tau_2}}\|_{L^\infty(\Om)} =  \bar\delta.
  \end{split}} $$
Similarly, corresponding terminology can be defined for the remaining conditions in Lemma \ref{L4.4}.


\begin{lemma}\label{L4.5}
If the solution $u$ traverses one condition among the conditions {\bf a}) - {\bf e}), 
then there is a constant $\beta>0$, independent of $\eps>0$, such that 
\beq\label{utt}
\int_{B_1} \big| u(y, \tau_2) -   u(y, \tau_1)\big|\, dy \ge \beta  
\eeq
for a ball $B_1$ in $\Om^\eps$.
\end{lemma}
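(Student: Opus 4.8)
The plan is to argue that if the solution $u$ traverses one of the conditions {\bf a}) -- {\bf e}), then at the crossing times $\tau_1$ and $\tau_2$ the function $u(\cdot, \tau_1)$ and $u(\cdot, \tau_2)$ differ by a fixed amount (not tending to $0$ with $\eps$) at some point of $\Om^\eps$; this, combined with the uniform $C^\gamma$ estimate of Theorem \ref{T4.3} (in the $y = x/\eps$ coordinates), forces the $L^1$-difference over a unit ball to be bounded below. Concretely, I would first rescale $y = (x - p_{i,\tau_1})/\eps$ (or the analogous recentering appropriate to the condition being traversed — e.g. centering at the boundary point $p\in\pom$ with $d_{p_{i,t}} = |p_{i,t}-p|$ in case {\bf b})), and note that by \eqref{peak1a}, \eqref{eM}, \eqref{eM1} the rescaled solutions $u(\cdot,\tau_1)$ and $u(\cdot,\tau_2)$ are uniformly bounded in $C^\gamma$ on a fixed large ball $B_R(0) \subset \Om^\eps$, so along a subsequence they converge locally uniformly to limits $v_1$ and $v_2$, each of which is (after an even extension across a hyperplane in the boundary cases) a translate of $w$ or else has two peaks — exactly the dichotomy used in the proof of Lemma \ref{L4.4}.

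The key point is then the pointwise separation: by the definitions of "traverses condition {\bf a})" (resp. {\bf b})--{\bf e})), the quantity measuring the relevant defect jumps from $\tfrac12\bar\delta$ to $\bar\delta$ (resp.\ $d_{p_{i,t}}$ from $N\eps$ to $2N\eps$, $|p_{i,t}-p_{j,t}|$ from $N\eps$ to $2N\eps$, etc.). In the rescaled picture this is a change in the $L^\infty$-distance between $u$ and the comparison profile $\phi_{\Lambda_t}$, or in the location of a peak, of a definite size; since $w$ is a fixed function with a unique maximum point, these two constraints cannot be simultaneously compatible with $u$ being $C^\gamma$-close to the same translate of $w$ at both times. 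Therefore there is a point $y_0 \in B_R(0)$ and a constant $c_0 > 0$, independent of $\eps$, with $|u(y_0,\tau_2) - u(y_0,\tau_1)| \ge c_0$. Using the uniform $C^\gamma$ bound \eqref{eM} in the $y$-variable, $|u(y,\tau_i) - u(y_0,\tau_i)| \le M|y - y_0|^\gamma$, so on the ball $B_\rho(y_0)$ with $\rho = (c_0/(4M))^{1/\gamma}$ we get $|u(y,\tau_2) - u(y,\tau_1)| \ge c_0/2$, and integrating yields
$$
\int_{B_\rho(y_0)} |u(y,\tau_2) - u(y,\tau_1)|\, dy \ge \tfrac{c_0}{2}\,|B_\rho| =: \beta > 0 .
$$
Since $B_\rho(y_0)$ is contained in a fixed ball $B_1$ of $\Om^\eps$ (enlarging $B_1$ if necessary, or translating), \eqref{utt} follows.

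I expect the main obstacle to be making the separation argument uniform across all five cases and rigorous in the boundary cases {\bf b}), {\bf d}), where one must recenter at a boundary point, flatten $\pom$ in a ball of radius $\sim \delta/\eps \to \infty$ in the $y$-coordinates, and take an even extension — and then check that the resulting limit profiles $v_1, v_2$ genuinely inherit the separation from the traversal hypothesis rather than degenerating. A second, more technical subtlety is that $\tau_1$ or $\tau_2$ may depend on $\eps$ and one of them may be $+\infty$ (a frozen flow); one handles this by the compactness afforded by Theorems \ref{T4.2}--\ref{T4.3} and the a priori bound \eqref{uq}, extracting convergent subsequences of $u(\cdot,\tau_1)$ and $u(\cdot,\tau_2)$ separately, since the estimate \eqref{utt} only involves these two time slices and not the flow in between.
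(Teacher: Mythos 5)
Your proposal is correct and follows essentially the same route as the paper's (very terse) proof: establish a pointwise separation $|u(z,\tau_1)-u(z,\tau_2)|\ge c_0$ at some point $z\in\Om^\eps$ with $c_0$ independent of $\eps$, then use the uniform $C^\gamma$ regularity in the $y$-coordinates to propagate this to an $L^1$ lower bound over a fixed ball. Your write-up simply supplies more detail (the rescaling, the compactness, and the case-by-case source of the pointwise gap) than the paper does.
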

  
\begin{proof}
It suffices to observe that if the solution $u$ traverses any one condition among {\bf a}) - {\bf e}), 
then there exists a point $z\in\Om^\eps$ such that
$|u(z, \tau_1)-u(z, \tau_2)|>\beta_1$ for some constant $\beta_1>0$ independent of $\eps>0$.
Therefore, \eqref{utt} follows from the regularity of the solution.
\end{proof}

If the initial condition $\phi_\Lambda$ satisfies  $\I_{s, \eta}(\phi_\Lambda)<S^*_s-\sigma$,
then by \S \ref{S4.3}, the gradient flow is frozen from $t=0$. 
Consequently, the behavior of the flow for $t>0$ is irrelevant and need not be analyzed further.
By Lemma \ref{L3.5} and the concavity property in \S \ref{S3.5}, we have $\I_{s, \eta}(\phi_\Lambda)<S^*_s-\sigma$
if one of conditions {\bf b}) - {\bf e}) in Lemma \ref{L4.4} holds.
The computations in Section \ref{S3} were done for the case $s=0$, but they remain valid for sufficiently small $s$.
Recall that we assume $0<s<e^{-1/\eps^4}$. 

If  $\I_{s, \eta}(\phi_\Lambda)>S^*_s-\sigma$, 
we show that the flow $u(\cdot, t)$ will be frozen before it can traverse any condition among {\bf a}) - {\bf e}).

\begin{lemma}\label{L4.6}
The flow $u(\cdot, t)$ cannot traverse any condition among {\bf a}) - {\bf e}) before it is frozen.
\end{lemma}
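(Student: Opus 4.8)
The plan is to argue by contradiction, combining the strict-decrease estimate of Lemma \ref{L4.4} with the transition estimate of Lemma \ref{L4.5} and the near-isometry-in-time estimate of Corollary \ref{C4.1}. Suppose the flow $u(\cdot,t)$ traverses one of the conditions {\bf a}) -- {\bf e}); say it does so on a time interval $(\tau_1,\tau_2)$ with the boundary values prescribed in the definition of ``traverses''. Throughout $(\tau_1,\tau_2)$ the solution is not frozen, so $\I_{s,\eta}(u(\cdot,t))>S_s^*-\sigma$, and moreover $u(\cdot,t)$ lies in the ``collar'' where the relevant quantity (the $L^\infty$-distance to $\phi_{\Lambda_t}$, a peak separation, a distance to the boundary, or a coefficient deviation) sits between half its threshold and the full threshold. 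Hence Lemma \ref{L4.4} applies at every $t\in(\tau_1,\tau_2)$, giving $\frac{d}{dt}\I_{s,\eta}(u(\cdot,t))<-\delta_1$ for a fixed $\delta_1>0$ independent of $\eps$.

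Integrating this over $(\tau_1,\tau_2)$ yields $\I_{s,\eta}(u(\cdot,\tau_1))-\I_{s,\eta}(u(\cdot,\tau_2))\ge \delta_1(\tau_2-\tau_1)$. On the other hand, by Lemma \ref{L4.5} there is a ball $B_1\subset\Om^\eps$ and a constant $\beta>0$, independent of $\eps$, with $\int_{B_1}|u(y,\tau_2)-u(y,\tau_1)|\,dy\ge\beta$. Feeding this into Corollary \ref{C4.1} (with $\omega=B_1$, $t_0=\tau_1$, $t_1=\tau_2$) gives
\beq\label{plan1}
\beta\le \int_{B_1}\big|u(y,\tau_2)-u(y,\tau_1)\big|\,dy\le C\,|\tau_2-\tau_1|^{1/2}\,\big|\I_{s,\eta}(u(\cdot,\tau_2))-\I_{s,\eta}(u(\cdot,\tau_1))\big|^{1/2}.
\eeq
Combining \eqref{plan1} with the lower bound $\I_{s,\eta}(u(\cdot,\tau_1))-\I_{s,\eta}(u(\cdot,\tau_2))\ge\delta_1(\tau_2-\tau_1)$, we get $\beta^2\le C^2(\tau_2-\tau_1)\cdot\big(\I_{s,\eta}(u(\cdot,\tau_1))-\I_{s,\eta}(u(\cdot,\tau_2))\big)$, so the total energy drop on $(\tau_1,\tau_2)$ is at least $\beta^2/(C^2(\tau_2-\tau_1))$; but it is also at least $\delta_1(\tau_2-\tau_1)$. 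Writing $\Delta\I$ for this drop, we have $\Delta\I\ge\delta_1(\tau_2-\tau_1)$ and $\Delta\I\ge\beta^2/(C^2(\tau_2-\tau_1))$, hence $\Delta\I\ge\sqrt{\delta_1}\,\beta/C$ by multiplying the two and taking a square root. Thus each traversal costs a fixed amount $\kappa:=\sqrt{\delta_1}\,\beta/C>0$ of the functional, a quantity independent of $\eps$ and $s$.

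To close the argument I compare $\kappa$ with the total energy budget. Since the flow starts from $\phi_\Lambda$ with $\I_{s,\eta}(\phi_\Lambda)$ bounded above by a constant close to $S_{0,k,l}=(k+l/2)^{1-p/q}S_0$ (uniformly in small $\eps,s$, by the estimates of \S\ref{S3.6} applied to the explicit function \eqref{weps1} together with the concavity analysis of \S\ref{S3.5}), and the flow is frozen as soon as $\I_{s,\eta}$ reaches $S_s^*-\sigma$, the functional can decrease by at most $\I_{s,\eta}(\phi_\Lambda)-(S_s^*-\sigma)$ before freezing. By Theorem \ref{T3.2}, $S_s^*=S_s^{*}\to S_{0,k,l}$ and $\sigma=e^{-1/\eps^2}\to0$, so this available drop is $o(1)$ as $\eps\to0$; in particular for $\eps$ small it is smaller than $\kappa$. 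Therefore the flow cannot complete even a single traversal before it is frozen, which is the assertion of Lemma \ref{L4.6}. The main obstacle is the uniformity: one must check that $\delta_1$ in Lemma \ref{L4.4}, $\beta$ in Lemma \ref{L4.5}, and $C$ in Corollary \ref{C4.1} are all genuinely independent of $\eps$ and $s$ (this is where the regularity estimates \eqref{eM}, \eqref{eM1} and the $L^\infty$ bound on $u_t$ from \cite{LLYZ} enter, after the dilation $y=x/\eps$), and that the initial energy $\I_{s,\eta}(\phi_\Lambda)$ is controlled from above by $S_{0,k,l}+o(1)$ uniformly over all admissible $\Lambda$ satisfying the $\delta$-apart condition with the coefficients near $1$; once that is in hand the energy-accounting inequality $\kappa>\I_{s,\eta}(\phi_\Lambda)-(S_s^*-\sigma)$ for small $\eps$ is immediate.
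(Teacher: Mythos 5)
Your proposal is correct and follows essentially the same route as the paper: a contradiction built from the strict decrease \eqref{md1} of Lemma \ref{L4.4}, the definite mass change \eqref{utt} of Lemma \ref{L4.5}, the interpolation estimate \eqref{uIs} of Corollary \ref{C4.1}, and the fact that the available energy drop $\I_{s,\eta}(\phi_\Lambda)-(S_s^*-\sigma)$ is $o(1)$ as $\eps\to 0$. The only difference is cosmetic: the paper deduces $\tau_2-\tau_1=o(1)$ and then contradicts \eqref{utt} directly, whereas you combine the two lower bounds on the drop into a fixed per-traversal cost $\kappa=\sqrt{\delta_1}\,\beta/C$ and contradict the $o(1)$ budget — an equivalent rearrangement of the same inequalities.
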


\begin{proof}
If the solution $u(\cdot, t)$ traverses a condition among {\bf a}) - {\bf e}) from time $\tau_1$ to time $\tau_2$,
we derive from Lemma \ref{L4.4} that \eqref{md1} holds for all $t\in (\tau_1, \tau_2)$.

If the flow is not frozen, one obtains \eqref{Ssg}. 
By our computation in \S \ref{S3.6}, we have $\I_{s, \eta}(\phi_\Lambda)<S_s^*+o(1)$ as $\eps\to 0$,
which, together with  \eqref{Ssg}, implies
$$\big| \I_{s, \eta} (u(\cdot, \tau_2))- \I_{s, \eta} (u(\cdot, \tau_1)) \big| = o(1)\ \ \text{ as $\eps\to 0$}.  $$
Therefore, by \eqref{md1}, we get $\tau_2-\tau_1=o(1)$ as $\eps\to 0$. 
It then follows from \eqref{uIs}  that  the integral 
$$
\int_{\omega} \big| u(\cdot, \tau_2) -   u(\cdot, \tau_1)\big|=o(1)\ \ \ \text{as $\eps\to 0$},\ 
$$
for any sub-domain $\omega$ with bounded volume, which contradicts \eqref{utt}.
\end{proof}

Note that conditions {\bf a}) - {\bf e}) in Lemma \ref{L4.4}  
contain all the possibilities for the solution $u_\Lambda(\cdot, t)$  
to run out of the set of peak functions.
In particular, condition {\bf a}) also includes the case when a boundary peak moves into the interior.
That is, when a boundary peak moves inward, condition {\bf a)} occurs.

Therefore, we have 

\begin{corollary}\label{C4.2}
Let $u(\cdot, t)$ be the solution to the parabolic $p$-Laplace equation \eqref{PE}-\eqref{PEb}
with initial condition $\phi_\Lambda$. 
Then,  $u(\cdot, t)$ is a peak function if $\I_{s,\eta}(u(\cdot, t))>S_s^*-\sigma$.
\end{corollary}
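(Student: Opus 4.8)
The plan is to deduce Corollary \ref{C4.2} directly from Lemma \ref{L4.6} together with the remark preceding it, namely that conditions {\bf a})--{\bf e}) of Lemma \ref{L4.4} exhaust all the ways in which the flow $u_\Lambda(\cdot,t)$ can leave the set of peak functions. The substantive work is already contained in Lemmas \ref{L4.4}--\ref{L4.6}, so the proof of the corollary itself is short: it is a continuity/first-exit-time argument combined with the monotonicity of the flow and the freezing convention of \S\ref{S4.3}.

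First I would record that the initial datum $\phi_\Lambda$ is a peak function. Parts (i)--(iii) of Definition \ref{D2.1} hold by construction, since $\mathbf p,\mathbf q$ satisfy the $\delta$-apart condition with $\delta=N\eps$, the coefficients lie in $(1-\hat\delta,1+\hat\delta)$, and after the adjustment of $\bar\delta$ explained in \S\ref{S2.4} the prescribed peaks coincide with the local mass centres; part (iv) holds uniformly in $t$ by Lemma \ref{L4.1}, Theorem \ref{T4.2} and Theorem \ref{T4.3}, which yield \eqref{peak2} and \eqref{peak3} (the descent identity \eqref{dgf} and the choice of $\eta$ keep the energy in the range where $\eta(t)=t$, so that \eqref{peak2} persists along the flow). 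Next, by the regularity estimates of Theorems \ref{T4.2}--\ref{T4.3}, $t\mapsto u(\cdot,t)$ is continuous in $C^\gamma(\bom)$; hence the local mass centres $\mathbf p_{\Lambda,t},\mathbf q_{\Lambda,t}$ (unique by Lemma \ref{L2.1}), the coefficients $\mathbf a_{\Lambda,t},\mathbf b_{\Lambda,t}$ (unique by \S\ref{S2.3}), the function $\phi_{\Lambda_t}$, the distances $d_{p_{i,t}}$, $|p_{i,t}-p_{j,t}|$, $|q_{i,t}-q_{j,t}|$, the normalized coefficients $a'_{i,t},b'_{j,t}$, and the quantity $\|u(\cdot,t)-\phi_{\Lambda_t}\|_{L^\infty(\Om)}$ all vary continuously in $t$.

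Then I would argue by contradiction. Suppose $\I_{s,\eta}(u(\cdot,t_0))>S_s^*-\sigma$ for some $t_0$ but $u(\cdot,t_0)$ is not a peak function. Since the flow is descending, $\I_{s,\eta}(u(\cdot,t))\ge\I_{s,\eta}(u(\cdot,t_0))>S_s^*-\sigma$ on $[0,t_0]$, so the flow is not frozen on this interval. As $u(\cdot,0)=\phi_\Lambda$ is a peak function and, as in the proof of Lemma \ref{L4.4}, none of conditions {\bf a})--{\bf e}) occurs at $t=0$, continuity forces one of the quantities listed above to move from its ``safe'' value to its ``critical'' value ($\|u(\cdot,t)-\phi_{\Lambda_t}\|_{L^\infty}$ from below $\tfrac12\bar\delta$ to $\bar\delta$, or $d_{p_{i,t}}$ from above $2N\eps$ to $N\eps$, or $|a'_{i,t}-1|$ from below $\tfrac12\hat\delta$ to $\hat\delta$, etc.) on some subinterval of $[0,t_0]$; that is, the flow traverses one of {\bf a})--{\bf e}) in the sense defined before Lemma \ref{L4.5}, before being frozen. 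This contradicts Lemma \ref{L4.6}. Hence no such $t_0$ exists, and $u(\cdot,t)$ remains a peak function whenever $\I_{s,\eta}(u(\cdot,t))>S_s^*-\sigma$, which is the assertion.

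The main obstacle lies not in the present corollary but in the ingredients it rests on: the exhaustiveness claim that {\bf a})--{\bf e}) genuinely capture every mode of exit from $\Phi_{k,l}$, in particular the passage of a boundary peak to the interior (absorbed into {\bf a}), as noted after Lemma \ref{L4.5}), and the continuous dependence along the flow of the mass centres and coefficients on $t$, which in turn relies on the $C^{1,\gamma}_x$ regularity theory for the parabolic $p$-Laplace equation recalled in \S\ref{S4.2}.
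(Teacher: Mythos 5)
Your proposal is correct and follows essentially the same route as the paper, which derives the corollary directly from Lemma \ref{L4.6} together with the remark that conditions {\bf a})--{\bf e}) exhaust all ways the flow can leave the set of peak functions (the paper states this and writes ``Therefore, we have'' without further detail). Your continuity/first-exit-time argument simply makes explicit the step the paper leaves implicit.
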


\vskip10pt

\begin{corollary} \label{C4.3} 
Let $u(\cdot, t)$ be as above.
If  $\Lambda'_t\to \p G$, then  $\I_{s,\eta}(u (\cdot, t))\le S_s^*-\sigma$,
where   $\Lambda'_t=({\bf p}_{\Lambda, t}, {\bf q}_{\Lambda, t}, {\bf a'}_{\Lambda, t}, {\bf b'}_{\Lambda, t})$ is given in \eqref{Lamt}, and $G$ is  defined in \eqref{G}.
\end{corollary}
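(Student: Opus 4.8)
The plan is to prove the contrapositive: assuming $\I_{s,\eta}(u(\cdot,t))>S_s^*-\sigma$, I will show that the parameter point $\Lambda'_t$ stays in the interior of $G$, and in fact at a positive distance from $\partial G$, which is incompatible with $\Lambda'_t\to\partial G$.

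First I would observe that, by the monotonicity \eqref{dgf} of the flow together with the freezing convention of \S\ref{S4.3}, the hypothesis $\I_{s,\eta}(u(\cdot,t))>S_s^*-\sigma$ forces $\I_{s,\eta}(u(\cdot,\tau))>S_s^*-\sigma$ for every $\tau\in[0,t]$; that is, the flow is unfrozen on all of $[0,t]$. By Corollary \ref{C4.2} the function $u(\cdot,\tau)$ is then a peak function for each $\tau\in[0,t]$, so by Definition \ref{D2.1}(i),(iii) its local mass centres ${\bf p}_{\Lambda,\tau},{\bf q}_{\Lambda,\tau}$ satisfy the $\delta$-apart condition and its coefficients lie in $(1-\hat\delta,1+\hat\delta)$; after the normalization \eqref{ab1a} this places $\Lambda'_\tau$ in the open region $G$ of \eqref{G}. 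In particular $\Lambda'_t\in G$, hence $\Lambda'_t\notin\partial G$.

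To upgrade this to a positive separation from $\partial G$, I would invoke Lemma \ref{L4.6}: an unfrozen flow cannot traverse any of the conditions \textbf{b})--\textbf{e}) of Lemma \ref{L4.4}, and these are precisely the collars of $\partial G$ (the $N\eps$-collar around its peak-separation part and the $\hat\delta$-collar around its coefficient part). Moreover, as recorded just before Lemma \ref{L4.6}, if any of conditions \textbf{b})--\textbf{e}) already holds for the initial function then $\I_{s,\eta}(\phi_\Lambda)<S_s^*-\sigma$, so the flow is frozen at $t=0$, contradicting that it is unfrozen on $[0,t]$; hence $\Lambda'_0$ lies strictly past every collar. Starting past the collars and, being unfrozen, unable to cross any of them, the path $\Lambda'_\tau$ remains at distance at least the collar width from $\partial G$ throughout $[0,t]$; in particular so does $\Lambda'_t$, contradicting $\Lambda'_t\to\partial G$. (Equivalently: by Lemma \ref{L4.4}, $\frac{d}{dt}\I_{s,\eta}(u(\cdot,\tau))<-\delta_1$ whenever $\Lambda'_\tau$ enters a collar, so since $\I_{s,\eta}\ge 0$ the flow spends only bounded total time near $\partial G$, and thus an unfrozen flow cannot have $\Lambda'_t\to\partial G$.) This completes the argument.

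The one point that needs care is the bookkeeping that identifies $\partial G$, as defined in \eqref{G}, with the thresholds appearing in conditions \textbf{b})--\textbf{e}) of Lemma \ref{L4.4}: one must carry the coefficient normalization \eqref{ab1a}--\eqref{ab1b} through when passing between ``$u(\cdot,\tau)$ is a peak function'' and ``$\Lambda'_\tau\in G$'', and one must use the remark preceding Corollary \ref{C4.2} — that a boundary peak drifting into the interior is registered by condition \textbf{a}), not by any condition defining $\partial G$ — so that this degeneration is automatically ruled out for a peak-function-valued flow. Granting this, the corollary is immediate from Corollary \ref{C4.2} and Lemma \ref{L4.6}.
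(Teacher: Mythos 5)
Your proposal is correct and is essentially the argument the paper intends (the corollary is stated without explicit proof, but it is meant to follow from Corollary \ref{C4.2}, Lemma \ref{L4.6}, and the observation that the thresholds in conditions \textbf{b})--\textbf{e}) form collars just inside $\p G$ which an unfrozen flow, starting beyond them since otherwise $\I_{s,\eta}(\phi_\Lambda)<S_s^*-\sigma$ freezes it at $t=0$, cannot traverse). Your contrapositive formulation and the bookkeeping of the normalization \eqref{ab1a} match the paper's route; no gap.
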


\subsection{Convergence of the gradient flow}\label{S4.5} 

Let $u(\cdot, t)=u_{s,\eta}(\cdot, t)$ be a solution to \eqref{PE}-\eqref{PEb}. 
If 
\beq\label{aim}
\I_{s, \eta}(u(\cdot, t))\ge S^*_s\ \ \text{for all $t>0$, }
\eeq
then by the regularity \eqref{eM1} and the estimate \eqref{dgf},
we conclude that $u(\cdot, t)$ converges to a limit $u_{0,s}$, which is a solution to \eqref{NPs}.

For the case $s=0$, note that $u_{0, s}$ is a peak function as in Definition \ref{D2.1},
and its peaks satisfy the $\delta$-apart condition.
Thus, by sending $s\to 0$,
we see that the limit $u_0=\lim_{s\to 0} u_{0, s}$ is a weak solution to \eqref{NP}, 
with a Lagrange multiplier that can be removed by a multiplication of the solution.
Hence, Theorem \ref{T3.1} follows.

Therefore, to prove Theorem  \ref{T3.1}, it suffices to find an initial condition  $\phi_\Lambda$
such that \eqref{aim} holds for all $t>0$.
We will use the topological degree to reach this goal.
 
\vskip20pt

\section{Proof of Theorem \ref{T3.1}}\label{S5}

In this section, we prove Theorem \ref{T3.1}.
The idea of the proof was  outlined in \S \ref{S3.3}.

\vskip5pt

As before, we denote ${\bf p}=(p_1, \cdots, p_k)\in\Om^k$, ${\bf q} = (q_1, \cdots, q_l)\in (\pom)^l$,
 ${\bf a} = (a_1, \cdots , a_{k})$ and ${\bf b} = (b_1, \cdots,  b_{l})$.
Denote for brevity
\beq \label{Lam1}
 {\begin{split}
 \Lambda &=  ({\bf p, q, a, b})=(p_1, \cdots, p_k, q_1, \cdots, q_l, 
              a_1, \cdots, a_k, b_1, \cdots, b_{l} ),\\
 \Lambda' &=  ({\bf p, q}, {\bf{a}}', {\bf{b}}')=(p_1, \cdots, p_k, q_1, \cdots, q_l, 
              a'_1, \cdots, a'_k, b'_1, \cdots, b'_{l} ),
              \end{split}}
\eeq
where $p_i\in\Om, q_j\in\pom, a_i, b_j\in\R^{1, +}$, $1\leq i\leq k$, $1\leq j\leq l$, 
$$a'_i=\frac{a_i}{\bar a}, \ \ b'_j=\frac{b_j}{\bar a}, \ \  \ i=1, \cdots, k, \ j=1, \cdots, l, $$
and $\bar a$ is the average of $a_1, \cdots, a_k, b_1, \cdots, b_l$.
Define
\beq \label{G}
{\begin{split}
G=  \big\{\Lambda' \ | \ & \text{$\Lambda'$ satisfies the following properties}: \\
      & \text{{\bf p} and {\bf q} satisfy the ${\delta}$-apart condition;}\\
      & \text{$|a'_i - 1|<\hat\delta$,\ $|b'_j -1| <\hat\delta$,  \ $\forall\ 1\le i\le k, \ 1\le j\le l$ \big\} . }
      \end{split}} \eeq
By definition, $G\subset \Om^k\times (\pom)^l\times \R^{k+l-1}$,
which is a manifold of dimension 
$$nk+(n-1)l+k+l-1=(n+1)k+nl -1.$$
 
Two special cases: $k=0$ and $l=0$.
\begin{itemize}
\item [(i)] If $l=0$, then the peak solution has interior peaks only, and
$${\begin{split}
G &=  \big\{\Lambda' =({\bf p, \, a'}) \ | \ \text{{\bf p} satisfy the ${\delta}$-apart condition;}\ 
  {  \text{$| a'_i - 1|<\hat\delta$,\ $\forall\ 1\le i\le k$} } \}.
      \end{split}} $$
\item [(ii)] If $k=0$, then the peak solution has boundary peaks only, and
$${\begin{split}
G &=  \big\{\Lambda' =({\bf q, \, b'}) \ | \ \text{{\bf q} satisfy the ${\delta}$-apart condition;}\ 
 { \text{$|b'_j -1|<\hat\delta$,\ $\forall\ 1\le j\le l$}  } \}.
      \end{split}} $$
\end{itemize}
Note that when $k=1$ and $l=0$, $G=\Om_{\delta}$ by the ${\delta}$-apart condition.

Let
$$\phi_\Lambda={\Small\text{$\sum_{i=1}^{k}$}} a_iw_{p_i, \eps}  
+ {\Small\text{$\sum_{j=1}^{l}$}} b_jw_{q_j, \eps} ,$$
where $w_{p, \eps}=w(\frac{x-p}{\eps})$. 
Let $u_\Lambda (\cdot, t)$ be the solution to parabolic p-Laplace equation  \eqref{PE}-\eqref{PEb} 
with initial condition $\phi_\Lambda$.
At the initial time $t=0$, we assume that $\bar a=1$.

The proof of Theorem \ref{T4.1} consists of the following steps. 

\begin{itemize}
\item [\it Step 1.] 
There is a sufficiently small constant $\sigma>0$ such that 
$$\I_{s,\eta}(u_\Lambda(\cdot, t))\leq S_s^*-\sigma$$ 
when $t=0$, for all boundary point $\Lambda\in\p G$. This is verified by Lemmas \ref{L3.4} - \ref{L3.5}, 
together with the concavity property in \S \ref{S3.5}. 
Although the arguments in Section \ref{S3} were carried out for the functional $\I$, 
the same conclusion holds for $\I_s$ and $\I_{s, \eta}$ since $s$ is chosen sufficiently small
and $\eta$ is appropriately selected as in \eqref{eta1}.
               
\vskip5pt

\item [\it Step 2.] 
For any $\Lambda\in G$ and $t>0$, if $\I_{s, \eta}(u_\Lambda(\cdot, t))\ge S_s^*-\sigma$, then $u_\Lambda(\cdot, t)$
is a peak function with interior peaks ${\bf p}={\bf p}_{\Lambda, t}$ and boundary peaks ${\bf q}={\bf q}_{\Lambda, t}$ 
satisfying the ${\delta}$-apart condition, 
and the coefficients ${\bf a}={\bf a}_{\Lambda, t}, {\bf b}={\bf b}_{\Lambda, t}$ satisfying the restrictions 
in Definition \ref{D2.1}  (see Corollary \ref{C4.2}).
\vskip5pt

Recall that if $\I_{s,\eta}(u_\Lambda (\cdot, t))\le S_s^*-\sigma$, then the flow is frozen as in  \S \ref{S4.3}. 
 
 \vskip5pt
{
\item [\it Step 3.]
Let $\Lambda_t=({\bf p}_{\Lambda, t}, {\bf q}_{\Lambda, t}, {\bf a}_{\Lambda, t}, {\bf b}_{\Lambda, t})$
and $\Lambda'_t=({\bf p}_{\Lambda, t}, {\bf q}_{\Lambda, t}, {\bf a'}_{\Lambda, t}, {\bf b'}_{\Lambda, t})$
be as in \eqref{Lamt}.
Note that $\Lambda=\Lambda'$ at $t=0$,  since $\bar a=1$ at $t=0$.

At any time $\tau >0$, if $\Lambda'_{\tau }\in \p G$, then $\I_{s,\eta}(u_{\Lambda}(\cdot, \tau))\le S_s^*-\sigma$ 
(see Corollary \ref{C4.3}). 
Therefore, $u_\Lambda(\cdot, t)$ is frozen after $t>\tau$. 

As a result, we obtain a mapping
$$T_t:\ \Lambda\in G \to\Lambda'_t\in G$$ 
for all time $t\ge 0$, such that $T_0=Id$.  

By Step 1, the mapping $T_t=Id$ on $\p G$.
By Step 2 and Step 3, $T_t(\Lambda)\in G$  for all  $\Lambda\in G$. 
Moreover, by the regularity to the parabolic equation \eqref{PE}-\eqref{PEb},
the mapping $T_t$ is continuous. 
Therefore, by the Brouwer degree theory,
$$T_t(G)=G \ \ \ \mbox{for any} \ t>0.$$
}


\item[\it Step 4.] 
{
For any given $\tau>0$, since $T_\tau(G)=G$ and
$u_{\Lambda}(\cdot, \tau)$ depends continuously on $\tau$,
by the definition of $S_s^*$,  we have
$$\sup\{ \I_{s, \eta}(u_\Lambda(\cdot, \tau))\ |\ \Lambda\in G\} \ge S^*_s. $$
Hence there is a point $\Lambda=\Lambda_\tau\in G$ such that 
the solution $u_\Lambda(\cdot, t)=u_{\Lambda_\tau} (\cdot, t)$ satisfies 
$\I_{s,\eta}(u_{\Lambda_\tau}(\cdot, \tau))\ge S_s^*$. 
Since the flow \eqref{PE}-\eqref{PEb} is a descent gradient flow for the functional $\I_{s,\eta}$, we see that
$$\I_{s,\eta}(u_{\Lambda_\tau}(\cdot, \tau')) \ge S_s^*\ \ \text{for all}\  \tau'\in [0, \tau ). $$
By passing to a subsequence, we have $\Lambda_\tau\to \Lambda_0\in G$ as $\tau\to\infty$.
By Step 3, $\Lambda_0\not\in\p G$. Moreover, 
$$\I_{s,\eta} (u_{\Lambda_0}(\cdot, t)) \ge  S_s^* \ \ \mbox{for any} \ t>0.$$
This is exactly the situation \eqref{aim}. Therefore, the conditions of \S\ref{S4.5} are satisfied, and the desired conclusion follows.

}
\end{itemize}
  
This completes the proof of Theorem \ref{T3.1}.

\noindent{\bf Remarks}\\
(i) 
The mapping $T_t$ is defined on $G$, 
which is a subset of $\Om^k\times (\pom)^l\times \R^{k+l-1}$.
We can extend the mapping $T_t$ to all points 
${\bf p}=(p_1, \cdots, p_k)\in\Om^k$ and ${\bf q}=(q_1, \cdots, q_l)\in (\pom)^l$,
by simply letting $T_t=Id$ if {\bf p} and {\bf q} do not satisfy the $\delta$-apart condition.
Then $T_t$ is continuous,
and well-defined for all $(p_1, \cdots, p_k)\in\Om^k$ and
$(q_1, \cdots, q_l)\in (\pom)^l$.\\[5pt]
(ii)
There is a decomposition for the mapping $T_t$ as follows:
$$T_t=T_{1,t}\oplus T_{2, t}\oplus T_{3, t},$$
where 
$T_{1, t}$ is the map from $\Om^k\to \Om^k$,
$T_{2, t}$ is the map from $(\pom)^l\to (\pom)^l$,
 and $T_{3, t}$ is the map from $\bar M\to \bar M$ with
 $$
\bar{M} =  \big\{(a_1, \cdots, a_k, b_1,\cdots, b_j) |  \ 
{ \big| a'_i -1\big|<\hat{\delta}, \ \big| b'_j -1\big|<\hat{\delta}, \ \forall \ 1 \leq i\leq k, 1\leq j\leq l\big\}. }
 $$
(iii) In the case $k=0, l=1$, 
we always have $\I(u)>S^*$, 
namely $\I(u_p(\cdot, t))<S^*-\sigma$ never happen.
In this case, a gradient flow with any initial condition will converges to a solution.

\vskip20pt

\end{document}